\tikzstyle{black_dot}=[draw=black, shape=circle, inner sep=2pt, minimum size=15pt]
\tikzstyle{white_circle}=[fill=white, draw=black, shape=circle, inner sep=0pt, minimum size=3pt]
\tikzstyle{empty-vertex}=[fill=none, draw=none, shape=rectangle, minimum size=0pt, inner sep=0pt]
\tikzstyle{dashed_edge}=[-, dashed, thick]
\tikzstyle{blue_edge}=[-, draw=blue, thick]
\tikzstyle{red_edge}=[-, draw=red, thick]
\tikzstyle{blue_dashed}=[-, draw=blue, dashed, thick]
\tikzstyle{red_dashed}=[-, draw=red, dashed, thick]
\tikzstyle{blue_arc}=[->, draw=blue, thick]
\tikzstyle{red_arc}=[->, draw=red, thick]
\tikzstyle{green_edge}=[-, draw=green, thick]
\tikzstyle{yellow_edge}=[-, draw=yellow, thick]
\tikzstyle{arrow}=[fill=none, ->, thick]
\newtheorem{tw}{Theorem}  
\newtheorem{lem}[tw]{Lemma}  
\newtheorem{cnj}[tw]{Conjecture}  
\newtheorem{cor}[tw]{Corollary}  
\newtheorem{obs}[tw]{Observation} 
\newtheorem{claim}{Claim}
\newtheorem{problem}{Problem}
\begin{document}
\hyphenation{every}
\title{Weak and strong local irregularity of digraphs}
\author{Igor Grzelec\thanks{Department of Discrete Mathematics, AGH University of Krakow, Poland} \thanks{The corresponding author. Email:  grzelec@agh.edu.pl}, Alfréd Onderko\thanks{Institute of mathematics, P.J. Šafárik University, Košice, Slovakia}, Mariusz Woźniak\footnotemark[1]}

\maketitle
\begin{abstract}
    Local Irregularity Conjecture states that every simple connected graph, except special cacti, can be decomposed into at most three locally irregular graphs, i.e., graphs in which adjacent vertices have different degrees.
    The connected minimization problem, finding the minimum number $k$ such that a graph can be decomposed into $k$ locally irregular graphs, is known to be NP-hard in general (Baudon, Bensmail, and Sopena, 2015).
    This naturally raises interest in the study of related problems.
    Among others, the concept of local irregularity was defined for digraphs in several different ways.
    In this paper we present the following new methods of defining a locally irregular digraph.
    The first one, weak local irregularity, is based on distinguishing adjacent vertices by indegree-outdegree pairs, 
    and the second one, strong local irregularity, asks for different balanced degrees (i.e., difference between the outdegree and the indegree of a vertex) of adjacent vertices.
    For both of these irregularities, we define locally irregular decompositions and colorings of digraphs.
    We discuss relation of these concept to others, which were studied previously, and provide related conjectures on the minimum number of colors in weak and strong locally irregular colorings.
    We support these conjectures with new results, using the chromatic and structural properties of digraphs and their skeletons (Eulerian and symmetric digraphs, orientations of regular graphs, cacti, etc.).  

    \textit{Keywords:} locally irregular coloring,  digraph, orientation of a graph, symmetric digraph, Eulerian digraph, cactus.
\end{abstract}

\section{Introduction}
In this work, we investigate generalizations of locally irregular graphs and locally irregular decompositions from (simple undirected) graphs to digraphs.
First, we explain the notion of locally irregular graphs and present some results of decomposing graphs into locally irregular graphs.

We call a graph $G$ \textit{locally irregular} if every two neighboring vertices of $G$ have different degrees. 
Decompositions of graphs into locally irregular subgraphs are often described in the language of colorings. 
We define \textit{locally irregular coloring} of a graph $G$ as an edge coloring of $G$ such that every color class induces a locally irregular subgraph of $G$. 
The \textit{locally irregular chromatic index} of a graph  $G$, denoted by $\mathrm{lir}(G)$, is the minimum number of colors required in the locally irregular coloring of $G$ if such a coloring exists. 

The problem of determining the value of $\mathrm{lir}(G)$ is closely related to the well-known 1-2-3 Conjecture stated by Karoński, Łuczak and Thomason in \cite{Karonski Luczak Thomason}. In 2024 the conjecture was confirmed by Keusch in \cite{Keusch}, where he proved that for every graph containing no isolated edges exists a \textit{neighbor-sum-distinguishing} edge coloring (i.e. an edge coloring in which every two neighboring vertices have different sums of colors of incident edges) using at most three colors. A weaker version of neighbor-sum-distinguishing edge coloring is  \textit{multiset neighbor distinguishing} edge coloring, where every two neighboring vertices have different multisets of colors of incident edges, because if the sums of colors are different then the multisets are also different. The best known result about the multiset neighbor distinguishing edge coloring stats that for every graph containing no isolated edges exists such a coloring using at most three colors and was proved by Vu\v cković in \cite{Vuckovic}. Obviously Keusch result about neighbor-sum-distinguishing edge coloring imply Vu\v cković theorem about multiset neighbor distinguishing edge coloring. On the other hand, if the graph $G$ satisfies ${\rm lir}(G)\leq k$, then it has multiset neighbor distinguishing edge coloring with at most $k$ colors. This holds because in a decomposition of the graph $G$ into $k$ locally irregular graphs, every two neighboring vertices in $G$ have different degrees in at least one locally irregular graph. Therefore every two neighboring vertices in $G$ have multisets differing in the multiplicity of at least one element. 

We can easily see that not every graph has a locally irregular coloring; easy examples of such graphs are odd-length paths and cycles.
In \cite{Baudon Bensmail Przybylo Wozniak} Baudon, Bensmail, Przybyło, and Woźniak fully characterized all non locally irregular colorable graphs. 
They also proposed the Local Irregularity Conjecture which states that every locally irregular colorable graph $G$ satisfies $\mathrm{lir}(G)\leq 3$.

Although both the $1$-$2$-$3$ Conjecture and the Local Irregularity Conjecture imply the fact that one can distinguish neighboring vertices by multisets using three colors, these properties are independent of each other. In the situation when the $1$-$2$-$3$ Conjecture was proved, the conjecture that $\mathrm{lir}(G)\leq 3$ for every locally irregular colorable graph $G$ remains the main focus of research on this topic. As we will see, we will encounter similar situations in case of digraphs.

Due to the fact that all non locally irregular colorable graphs are subclass of cacti (connected graphs with edge-disjoint cycles) 
determining the value of locally irregular chromatic index of cacti was set as a goal by Sedlar and Škrekovski in~\cite{Sedlar Skrekovski 2} and~\cite{Sedlar Skrekovski}. During their research, a single counterexample to the Local Irregularity Conjecture was found in~\cite{Sedlar Skrekovski}. 
In~\cite{Sedlar Skrekovski 2}, they proved that this counterexample (whose value of the locally irregular coloring index equals four), the bow-tie graph $B$ (see Figure \ref{graph_B}), is the only counterexample to the Local Irregularity Conjecture among all colorable cacti. 
    \begin{figure}[h!]
    \centering
    \includegraphics[width=5cm]{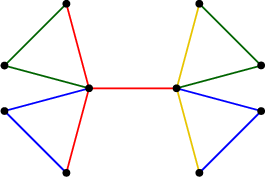}
    \caption{Locally irregular coloring of the bow-tie graph $B$ using four colors.}
    \label{graph_B}
    \end{figure}
In light of these new results, the Local Irregularity  Conjecture was improved in the following way.

\begin{cnj}[Local Irregularity Conjecture~\cite{Baudon Bensmail Przybylo Wozniak}, \cite{Sedlar Skrekovski 2}]
\label{graph3}
    Let $G \neq B$ be a connected locally irregular colorable graph. Then $\mathrm{lir}(G)\leq 3$.
\end{cnj}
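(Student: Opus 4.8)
The plan is to attack the conjecture by partitioning the class of connected locally irregular colorable graphs according to density and treating the sparse and dense regimes with different tools. Since the non-colorable graphs have already been characterized in \cite{Baudon Bensmail Przybylo Wozniak} and the cactus case (with the single exception $B$) is settled in \cite{Sedlar Skrekovski 2}, it suffices to prove $\mathrm{lir}(G)\le 3$ for connected colorable graphs that are \emph{not} cacti, i.e.\ graphs possessing two cycles sharing at least an edge, equivalently graphs whose block decomposition contains a block that is neither a single edge nor a single cycle. For such graphs I would first attempt an induction on the number of blocks, or on $|E(G)|$: peel off a pendant block or an ear, decompose the smaller graph by the inductive hypothesis into three locally irregular parts, and then reinsert the removed edges, assigning their colors so as to avoid creating a monochromatic pair of adjacent vertices of equal degree. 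The hope is that being ``denser than a cactus'' supplies enough freedom to always repair the boundary.

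For the dense regime I would use the probabilistic method. Color the edges of $G$ randomly with two colors; by Chernoff-type concentration each vertex has, in each color class, degree close to half of its degree, so after a small deterministic correction the degrees of adjacent vertices within a fixed class differ with high probability. The remaining ``bad'' monochromatic equal-degree adjacencies form a sparse, locally bounded conflict structure, which I would resolve with the Lov\'asz Local Lemma (or entropy compression), recoloring a few edges into the third color, whose class is kept locally irregular by the same concentration estimates. This should yield $\mathrm{lir}(G)\le 3$ once the minimum degree exceeds an absolute constant, and it is essentially the route by which bounded (non-sharp) constants have been obtained in the literature.

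The main obstacle, and the reason this statement is a conjecture rather than a theorem, is the intermediate range of degrees together with the demand for \emph{exactly} three colors. Probabilistic tools require the minimum degree to be large, while the clean structural inductions work only for very sparse graphs; graphs of bounded but moderate degree fall between the two methods, and it is precisely there that rigid, near-regular configurations (the higher analogues of the odd cycles and of $B$) obstruct any balanced two-coloring and threaten to force a fourth color. Controlling these almost-regular spots—showing that, outside the already-classified exceptions, one can always redistribute degrees among three classes—is the crux, and I would not expect the two regimes to meet cleanly without a genuinely new local-adjustment lemma bridging them.
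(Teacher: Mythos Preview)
The statement you are asked to prove is stated in the paper as an \emph{open conjecture}; the paper offers no proof of it, and indeed none is known. Your proposal is not a proof either, and you say so yourself: the last paragraph correctly identifies the very reason the statement remains a conjecture, namely the gap between the structural arguments that handle sparse graphs (trees, cacti) and the probabilistic arguments that handle graphs of very large minimum degree (Przyby{\l}o's $\delta\ge 10^{10}$ result cited in the paper). What you have written is an accurate survey of the two existing partial approaches and an honest admission that they do not meet.

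Two concrete points where the sketch would not go through as written. First, the inductive scheme for ``denser than a cactus'' graphs---peel off a block or an ear, color the rest with three colors, then reinsert---does not work in general: when you reinsert edges you change the degrees of the attachment vertices in all three color classes simultaneously, and there is no reason the three classes should all remain locally irregular; this kind of boundary repair is exactly what fails for the bow-tie $B$ and is not prevented merely by having a $2$-connected block. Second, the probabilistic step you describe (random $2$-coloring plus Local Lemma cleanup into a third color) is essentially the route behind the constant bounds $328$ and $220$ cited in the paper, but those arguments do not reach $3$; pushing them down to exactly three colors would require controlling conflicts at the level of single edges rather than in expectation, and no known concentration argument does this once the minimum degree is moderate. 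So your ``new local-adjustment lemma bridging them'' is not a technical detail but the entire missing content of the conjecture.
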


The Local Irregularity Conjecture was confirmed for some graph classes;
among others trees~\cite{Baudon Bensmail Przybylo Wozniak}, complete graphs~\cite{Baudon Bensmail Przybylo Wozniak}, graphs with minimum degree at least $10^{10}$~\cite{Przybylo}, and $r$-regular graphs, where $r\geq10^6$~\cite{Baudon Bensmail Przybylo Wozniak}. 
Additionally, for every locally irregular colorable graph $G$, Bensmail, Merker and Thomassen~\cite{Bensmail Merker Thomassen} proved that $\mathrm{lir}(G)\leq 328$.
Later Lu\v zar, Przybyło, and Soták decreased this bound to 220 in~\cite{Luzar Przybylo Sotak}.

There are several ways to generalize the concept of locally irregular decompositions in terms of oriented graphs and digraphs. 
Note that, in this paper, we distinguish between oriented graphs (orientations of simple graphs) and digraphs; 
in contrast to oriented graphs, arcs going in opposite directions between a pair of vertices are allowed in digraphs. 
For a vertex $v$ of a digraph, we denote by $d^+(v)$ the outdegree (i.e. the number of outgoing arcs from $v$) and by $d^-(v)$ the indegree (i.e. the number of incoming arcs to $v$) of $v$. 

Now we introduce the notation that we use to distinguish between previously studied concepts of local irregularity in digraphs.
For $a,b \in \{+,-\}$ we say that a digraph $D$ is \textit{$(a,b)$-locally irregular} if for every arc $uv \in E(D)$ we have $d^a(u) \neq d^b(v)$.
For example (for $a =  +$ and $b = -$), we say that a digraph $D$ is $(+,-)$-locally irregular if the outdegree of $u$ is different from the indegree of $v$ for each arc $uv$ of $D$.
The definition of an $(a,b)$-locally irregular digraph, for $a,b \in \{+,-\}$, allows us to naturally define an \textit{$(a,b)$-locally irregular coloring} as an arc coloring in which each color class induce an $(a,b)$-locally irregular digraph.
The \textit{$(a,b)$-locally irregular chromatic index} of $D$, denoted by $\mathrm{lir}^{(a,b)}(D)$, is then the minimum number of colors in an $(a,b)$-locally irregular coloring of $D$.

It is easy to see that $(-,-)$-locally irregular coloring and $(+,+)$-locally irregular coloring of a digraph $D$ are analogous notions because a $(-,-)$-irregular decomposition of $D$ gives us a $(+,+)$-irregular decomposition of the digraph obtained from $D$ by reversing the orientation of every arc, and vice versa. 
Hence, all mentioned results on $(+,+)$-locally irregular colorings may also be viewed as results on $(-,-)$-locally irregular colorings. 

The $(+,+)$-locally irregular colorings were inspired by neighbor-sum-distinguishing arc colorings introduced by Baudon, Bensmail and Sopena in \cite{Baudon Bensmail Sopena}. We say that an arc coloring is \textit{neighbor-sum-distinguishing} if every two neighboring vertices have different sums of colors of its outgoing arcs. The authors in \cite{Baudon Bensmail Sopena} proved that every oriented graph $D$ has neighbor-sum-distinguishing arc coloring with at most three colors.

Bensmail and Renault in \cite{Bensmail Renault} observed that there are no digraphs which are non $(+,+)$-locally irregularly colorable and proposed the following conjecture.
\begin{cnj}[\cite{Bensmail Renault}]
    Every oriented graph $D$ satisfies $\mathrm{lir}^{(+,+)}(D)\leq 3$.
\end{cnj}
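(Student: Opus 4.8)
The plan is to attempt the conjecture by first reducing to weakly connected oriented graphs (isolated vertices and distinct weak components are handled independently) and then attacking it through a structural sufficient condition combined with the neighbour-sum-distinguishing machinery already available. The first observation I would record is a convenient sufficient condition for a single color class to be $(+,+)$-locally irregular: if a color class is a disjoint union of \emph{out-stars}---each weakly connected component being a single vertex $c$ together with arcs directed only to leaves having out-degree $0$ inside the class---then along every arc $u\to v$ of the class we have $d^+(u)\ge 1 > 0 = d^+(v)$, so the class is automatically $(+,+)$-locally irregular. Hence any decomposition of $E(D)$ into three out-star forests would already certify $\mathrm{lir}^{(+,+)}(D)\le 3$. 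Since the directed star arboricity can exceed three for high-indegree configurations, three out-star forests will not always exist; the point of recording this condition is that the genuine definition is strictly more flexible (we only need \emph{unequal} out-degrees, not a zero out-degree), and that extra slack is what should let us repair the cases where the naive star decomposition fails.

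Second, I would try to convert the neighbour-sum-distinguishing weighting guaranteed by Baudon, Bensmail and Sopena into a $(+,+)$-locally irregular coloring. Their theorem yields a weighting $w\colon E(D)\to\{1,2,3\}$ such that the weighted out-sums $\sigma(x)=\sum_{x\to y} w(xy)$ satisfy $\sigma(u)\neq\sigma(v)$ for every arc $u\to v$. The goal is to \emph{realise} these distinguishing out-sums as honest out-degrees inside the color classes: distribute each vertex's out-arcs among the three classes so that, in the class carrying a given arc $u\to v$, the number of out-arcs of $u$ in that class differs from that of $v$. One would use $\sigma$ as a guide when deciding, at each tail, how many out-arcs to place in each class, keeping the per-class out-degrees correlated with $\sigma$ so that the inequality $\sigma(u)\neq\sigma(v)$ propagates to an inequality of out-degrees in at least the class that contains the arc $u\to v$.

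The remaining work then splits naturally according to density. For oriented graphs of large minimum degree I would mimic the algebraic and probabilistic tools behind the known $1$-$2$-$3$-type results---a Combinatorial Nullstellensatz or Lovász-Local-Lemma argument for the choice of per-class out-degrees---where the abundance of out-arcs supplies enough freedom to satisfy all local constraints simultaneously. The sparse and structured cases (oriented trees, unicyclic orientations, and the cacti, Eulerian and symmetric skeletons emphasised elsewhere in this paper) I would settle by direct induction: remove a pendant arc or a single carefully chosen arc, recolour the smaller digraph, and restore $(+,+)$-local irregularity by hand while reinserting it.

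The main obstacle I anticipate is the cascading, non-monotone nature of the constraint. Because $(+,+)$-local irregularity is measured only at the \emph{tail} of each arc, recolouring a single arc $u\to v$ simultaneously changes $d^+(u)$ in two classes, which can violate previously-satisfied arcs leaving $u$ while it fixes $u\to v$; there is no evident potential function that strictly decreases under such local repairs. Controlling this feedback---and in particular pinning down the extremal sparse configurations, where the flexibility that makes the dense case tractable is smallest---is where I expect the genuine difficulty to lie, precisely the sort of gap that separates the (now proved) $1$-$2$-$3$ Conjecture from the (still largely open) Local Irregularity Conjecture in the undirected setting.
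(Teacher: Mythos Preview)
The statement you are trying to prove is a \emph{conjecture}, and the paper does not prove it. It is stated only as background: Bensmail and Renault proposed it and showed $\mathrm{lir}^{(+,+)}(D)\leq 6$ for every oriented graph $D$, and the best general bound recorded in the paper is the later improvement to $5$ due to Baudon, Bensmail, Przyby{\l}o and Wo\'zniak. There is therefore no ``paper's own proof'' to compare against; the conjecture is open.

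Assessing your outline on its own merits, the central step does not go through. You observe correctly that three out-star forests need not exist, so that reduction is only heuristic. You then propose to upgrade the Baudon--Bensmail--Sopena weighting $w\colon E(D)\to\{1,2,3\}$ with distinct out-sums $\sigma$ into a $(+,+)$-locally irregular $3$-coloring by ``distributing out-arcs among the three classes so that the inequality $\sigma(u)\neq\sigma(v)$ propagates to an inequality of out-degrees in the class containing $uv$''. But you give no mechanism for this propagation, and in general none exists: from $d_1^+(u)+2d_2^+(u)+3d_3^+(u)\neq d_1^+(v)+2d_2^+(v)+3d_3^+(v)$ one cannot force $d_i^+(u)\neq d_i^+(v)$ for the particular $i$ that colors $uv$. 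The coupling you need is exactly what separates the (solved) sum-distinguishing problem from the (unsolved) locally irregular decomposition problem, as you yourself note in the final paragraph. The subsequent appeal to Nullstellensatz/LLL for dense graphs and ad hoc induction for sparse ones is a plausible programme, but it is a description of where the difficulty lies rather than a resolution of it; in particular you would still have to handle all oriented graphs of intermediate degree, and the feedback issue you flag (recolouring one arc disturbs two classes at the tail) is precisely why the bound has been stuck at $5$.
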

Moreover, they proved that deciding whether $\mathrm{lir}^{(+,+)}(D)\leq2$ holds for a given oriented graph $D$ is NP-complete and showed that $\mathrm{lir}^{(+,+)}(D)\leq6$ for every oriented graph $D$. 
In~\cite{Baudon Bensmail Przybylo Wozniak2} the upper bound on $\mathrm{lir}^{(+,+)}(D)$ was lowered to five for arbitrary digraph $D$.

Recently, the $(+,-)$-locally irregular coloring of digraphs was introduced by Bensmail et al. in \cite{Bensmail Filasto Hocquard Marcille}. The inspiration of that locally irregular coloring of digraphs was Barme et al. result from \cite{Barme Bensmail Przybylo Wozniak}, which says that every digraph $D$ without arc $uv$ verifying $d^+(u)=d^-(v)=1$ has arc coloring using colors from the set $\{1,2,3\}$ so that for every arc $uv$ of $D$, the sum of colors of outgoing arcs from $u$ is different from the sum of colors of ingoing arcs to $v$.
Bensmail et al. observed that only digraphs which contain (an oriented) sink-source path do not have $(+,-)$-locally irregular coloring and proposed the following conjecture.
\begin{cnj}[\cite{Bensmail Filasto Hocquard Marcille}]
    Every digraph $D$ without sink-source path satisfies $\mathrm{lir}^{(+,-)}(D)\leq 3$.
\end{cnj}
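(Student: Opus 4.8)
The plan is to follow the staged strategy that has driven the undirected and the $(+,+)$ cases, since this statement is their directed analogue: first isolate the role of the sink-source obstruction, then bound $\mathrm{lir}^{(+,-)}$ on progressively larger structural classes. Since a $(+,-)$-locally irregular coloring can be produced independently on each weakly connected component, I would first reduce to $D$ weakly connected.

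The essential point to extract from the hypothesis is \emph{why} an arc obstructs colorability. An arc $uv$ is forced bad when $d^+(u)=d^-(v)=1$ in every color class that contains it, that is, when the tail $u$ has no second out-arc to collect and the head $v$ no second in-arc; the sink-source path condition is precisely what forbids such configurations and thereby supplies, at each arc, a vertex with spare degree. I would formalise this as a local slack lemma: if $D$ has no sink-source path, then for every arc $uv$ at least one of $u,v$ carries an additional arc that can be co-coloured so as to push $d^+(u)\ge 2$ or $d^-(v)\ge 2$ within its class. This lemma plays the role, in the directed setting, of the ``no isolated edge'' condition in the graph case.

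For the constructive part I would use a star decomposition. Call a sub-digraph a \emph{fat out-star} (resp. \emph{fat in-star}) if its components are vertex-disjoint stars of size at least $2$ oriented away from (resp. into) their centres; every such sub-digraph is $(+,-)$-locally irregular, since a leaf has the complementary degree $1$ while its centre has degree at least $2$. The plan is to pack almost all arcs into two classes, each a vertex-disjoint union of fat stars, using a balanced orientation or an Eulerian splitting of $D$ that distributes the out-arcs and in-arcs at each vertex in blocks of size at least $2$; the few leftover arcs go into a third class that the slack lemma lets us keep $(+,-)$-locally irregular. I would test this first on the structured families singled out in the abstract, where the packing is transparent: Eulerian digraphs (decompose into directed closed trails and amalgamate consecutive arcs into fat stars), symmetric digraphs and orientations of regular graphs (transfer the underlying $\mathrm{lir}$ bounds through a coherent choice of orientation), and cacti (induct on the block structure).

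The step I expect to be the real obstacle is the third-colour repair around short directed cycles and the arcs incident to them. On a short directed cycle every vertex has in- and out-degree $1$, so no internal slack exists and fat-star packing breaks down; the only slack is provided by arcs leaving the cycle, and arranging the repair so that the third class is itself $(+,-)$-locally irregular while no new equality $d^+(u)=d^-(v)$ is created in the first two classes is exactly where a global device seems unavoidable. I would therefore expect a general constant bound $\mathrm{lir}^{(+,-)}(D)\le c$ to come first, via an entropy-compression argument or a discharging scheme over the block-cactus structure of $D$ (as in the line of work lowering the graph bound from $328$ to $220$), with the sharp value $3$ emerging only after a delicate class-by-class tightening.
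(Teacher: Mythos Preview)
This statement is a \emph{conjecture}, not a theorem: the paper does not prove it, and indeed no proof is known. The paper merely records it as an open problem posed by Bensmail, Filasto, Hocquard, and Marcille, noting that those authors established only the weaker bound $\mathrm{lir}^{(+,-)}(D)\le 7$. There is therefore no ``paper's own proof'' to compare your proposal against.

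Your write-up is not a proof either, and you acknowledge as much in the final paragraph: you anticipate that a general constant bound $\mathrm{lir}^{(+,-)}(D)\le c$ would come first via entropy compression or discharging, with the sharp value $3$ only later. That is a reasonable research plan, but it means the proposal does not establish the conjectured bound. Several of the intermediate steps are also not justified: the claimed packing of ``almost all arcs'' into two classes of vertex-disjoint fat stars is asserted rather than proved, and it is not clear that such a decomposition exists for arbitrary digraphs (the Eulerian and regular cases you mention are very special). The ``local slack lemma'' you describe is essentially the observation that excludes sink-source paths, but it does not by itself control what happens globally when you try to repair the third class. In short, what you have written is a sketch of a possible attack on an open problem, not a proof of the statement.
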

The authors also provided a constant upper bound equal to seven for the $(+,-)$-locally irregular chromatic index of any digraph that does not contain a sink-source path. 
Additionally, they made some remarks on $(-,+)$-locally irregular colorings of digraphs.

In this paper we present two new concepts of local irregularities of digraphs and the corresponding coloring problems. 
The first one is based on distinguishing neighbors by the outdegree-indegree pairs. 
Such a condition is a weakening of the condition that outdegrees (or indegrees) of two adjacent vertices are different (used in the definition of $(+,+)$-locally irregular decomposition and coloring). 
Hence, we use \textit{weak locally irregular} to denote such a decomposition and a coloring.
We propose a conjecture that 2 colors are enough for a weak local irregular decomposition of any digraph; this suggest that such a weakening  of $(+,+)$-irregularity results in needing fewer colors in the general case (see Conjecture 3.1 in~\cite{Baudon Bensmail Przybylo Wozniak2} that states that ${\rm lir}^{(+,+)}(D)\leq 3$ for each digraph $D$). In Section~\ref{section_weak}, we support the conjecture on weak locally irregular chromatic index with several new results.

For the second type of local irregularity that is investigated in this paper, we use balanced degrees as a distinguishing factor between adjacent vertices. 
A \textit{balanced degree} of a vertex $x$ in a digraph $D$, denoted by $\sigma(x)$, is the difference of the outdegree of $x$ and its indegree, i.e, $\sigma(x) = d^+(x) - d^-(x)$. The inspiration for using balanced degrees as a distinguishing factor between adjacent vertices comes from Borowiecki, Pilśniak and Grytczuk result, from \cite{Borowiecki Grytczuk Pilśniak}, which says that every digraph $D$ has an arc coloring with colors from the set $\{1,2\}$ so that every two adjacent vertices in $D$ has different values of the parameter defined for every vertex as the sum of colors of outgoing arcs minus sum of colors of ingoing arcs. Note that, for an arc $xy$, the condition $\sigma(x) \neq \sigma(y)$ is more restrictive than the condition $(d^+(x),d^-(x)) \neq (d^+(y),d^-(y))$: while the former one implies the later one, the converse is not true.
However, it is not true that the condition $\sigma(x) \neq \sigma(y)$ for every arc $xy$ in $D$ strengthens the $(+,+)$-locally irregular condition.
Hence, the corresponding coloring concepts, strong locally irregular colorings and $(+,+)$-locally irregular colorings are in general incomparable.
We investigate strong locally irregular colorings in Section~\ref{section_strong}, 
provide a conjecture on a strong locally irregular chromatic index of a digraph, and support it with new results. 

\section{Weak local irregularity} \label{section_weak}

In this section, we discuss the local irregularity in the sense that pairs of outdegrees and indegrees of adjacent vertices are different, i.e., if $xy \in E(D)$ then $(d^+(x), d^-(x)) \neq (d^+(y), d^-(y))$.
Locally irregular coloring and locally irregular chromatic index, denoted by ${\rm lir}(D)$, of a digraph $D$ are defined naturally.
Similarly to the case of $(+,+)$-locally irregular decompositions and colorings, a single arc is locally irregular. 
Therefore, assigning a unique color to each arc of a digraph yields a feasible coloring. 
Hence, every digraph admits a locally irregular coloring in this sense. 
Generally, an upper bound given by $|E(D)|$ can be far from being optimal. 
We state the following conjecture, supported by several results in this section (Theorems~\ref{dwudzielne}, \ref{trojdzielne}, \ref{tournament}, and~\ref{lir_symmetric}).

\begin{cnj}\label{main}
    Every digraph $D$ satisfies ${\rm lir}(D)\leq 2$.
\end{cnj}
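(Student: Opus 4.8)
The plan is to prove the bound by combining structural base cases with a global probabilistic argument, using the orientation to beat the undirected barrier. First observe the following \emph{sufficient condition}: within a color class, an arc $xy$ is satisfied whenever $d^+(x)+d^-(x)\neq d^+(y)+d^-(y)$, since equal pairs force equal total degrees. Thus a weak locally irregular $2$-coloring would follow from a $2$-decomposition in which each class has a locally irregular underlying multigraph; but the undirected local irregularity index can reach $3$ (and $4$ for the bow-tie $B$), so this reduction alone cannot close the gap. The orientation must genuinely be exploited: the pair $(d^+,d^-)$ carries strictly more information than the total degree, and the point of Conjecture~\ref{main} is that this extra information lets two colors suffice where three are needed in the undirected setting.

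I would first secure the structured cases that anchor the argument. For bipartite digraphs (Theorem~\ref{dwudzielne}) the clean coloring is to place every ``forward'' arc (from the first part to the second) in one class and every ``backward'' arc in the other: within each class every vertex is either a class-source ($d^-=0$) or a class-sink ($d^+=0$), and an arc $xy$ forces $d^+(x)\geq 1$ and $d^-(y)\geq 1$, so the pairs $(d^+(x),0)$ and $(0,d^-(y))$ cannot coincide. The tripartite case (Theorem~\ref{trojdzielne}) refines this layering, now with care because a middle vertex need not be a pure source or sink. For tournaments (Theorem~\ref{tournament}) and symmetric digraphs (Theorem~\ref{lir_symmetric}) I would instead exploit the near-regularity of the skeleton, splitting the arc set into two pieces whose induced out/in-degree sequences are forced to differ across each arc. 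These families both confirm the bound and supply deterministic tools for handling locally sparse or highly regular regions in the general argument.

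For an arbitrary digraph I would attempt a two-stage \emph{probabilistic-plus-correction} scheme. Color each arc independently and uniformly with one of the two colors, and for an arc $xy$ let $A_{xy}$ be the bad event that $x$ and $y$ receive identical pairs in the class of $xy$. Conditioned on the arc colors, each coordinate of the pair at $x$ is a sum of independent indicators over the arcs at $x$, hence approximately binomial, so for a vertex of large total degree $\Pr[A_{xy}]$ is of order $\deg^{-\Omega(1)}$. This is small enough that the Lovász Local Lemma applies over the dependency graph, whose degree is polynomial in the maximum degree, and it resolves every arc incident to a high-degree vertex.

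The main obstacle, as usual, is the \emph{low-degree} part, where $\Pr[A_{xy}]$ is a constant and the Local Lemma is powerless. Here I would strip the subdigraph $D_0$ spanned by low-degree vertices and color it deterministically with the structural tools above, since its components are sparse, close to orientations of forests and cacti, exactly the regime the base cases address. The delicate point is the \emph{interface}: a boundary vertex sees arcs from both $D_0$ and the random part, so its pair in a given class is determined jointly by the deterministic and the random choices. I expect controlling these interface arcs to be the crux of a complete proof, most plausibly by reserving a small set of free arcs at each boundary vertex to absorb conflicts after the random phase is revealed, in the spirit of an alteration or absorption argument; this is precisely the step that the special cases of this section leave open in full generality.
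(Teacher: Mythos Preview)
The statement you are addressing is Conjecture~\ref{main}, and the paper does \emph{not} prove it: it is stated as an open conjecture, supported only by the special cases in Theorems~\ref{dwudzielne}, \ref{trojdzielne}, \ref{tournament}, and~\ref{lir_symmetric}, together with the general bound ${\rm lir}(D)\leq 5$ inherited from the $(+,+)$-case. There is therefore no ``paper's own proof'' to compare against, and any complete argument here would be a resolution of an open problem.

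Your proposal is not such a resolution, and to your credit you say so in the last sentence. But beyond the interface step you flag, there are two earlier gaps that already block the scheme. First, the Local Lemma arithmetic does not close. The bad event $A_{xy}$ asks that two binomial-type pairs coincide; standard anti-concentration gives $\Pr[A_{xy}]$ of order roughly $1/d$ when both endpoints have total degree about $d$ (each coordinate match contributes about $1/\sqrt{d}$). Meanwhile $A_{xy}$ depends on every arc touching $x$ or $y$, so the dependency degree in the LLL graph is $\Theta(\Delta^2)$. The product $p\cdot D$ is then of order $\Delta$, not bounded, and the symmetric LLL fails outright; ``$\deg^{-\Omega(1)}$'' is not strong enough here, and you would need entropy compression or a much more delicate semi-random scheme even to handle the dense part. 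Second, the assertion that the low-degree subdigraph $D_0$ has components ``close to orientations of forests and cacti'' is unfounded: bounded maximum degree says nothing about tree-width or cycle structure, so $D_0$ can have skeleton with arbitrarily large chromatic number, which is exactly the regime the paper's deterministic tools (Theorems~\ref{dwudzielne}--\ref{trojdzielne}) do \emph{not} cover. Neither of these obstacles is addressed by the absorption idea you sketch for the boundary.
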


Note that any orientation of a simple locally irregular graph yields a locally irregular oriented digraph: 
if the degrees of two adjacent vertices in a simple graph are different, then in an orientation of this graph, either outdegrees or indegrees of these two vertices are different. 
This, in particular, means that if Conjecture~\ref{graph3} is true, an orientation of every connected graph (except for special cacti), is colorable using at most 3 colors.

A \textit{skeleton} of a digraph $D$ is a simple graph obtained from $D$ by replacing every its arc with a single simple edge, and, additionally, replacing the multiedges with single edges.
Hence, the observation from the previous paragraph could be restated: if a skeleton of an oriented graph is locally irregular then the oriented graph is weak locally irregular.
For general digraphs, however, this is not true: consider for example a 5-vertex digraph $D$ given by $E(D) = \{v_2v_1,v_2v_3,v_4v_2,v_4v_5,v_5v_4\}$ and its skeleton $G$.
While $G$ is a locally irregular tree, $D$ is not weak locally irregular ($v_2$ and $v_4$ are adjacent and they have the same outdegree-indegree pairs).

Given the current state of the art, Conjecture~\ref{graph3} is far from being proved for general graphs. However, the connection between the weak local irregularity and $(+,+)$-local irregularity of digraphs (the later one is a special case of the former one) results in the fact that every upper bound on $\mathrm{lir}^{(+,+)}(D)$ upper bounds the weak locally irregular chromatic index of $D$. Hence, from the result of Baudon et al. \cite{Baudon Bensmail Przybylo Wozniak2}, which states that $\mathrm{lir}^{(+,+)}(D)\leq 5$ for every digraph $D$, we get:
\begin{tw}\label{theorem_lir_general_upper_bound}
   For every digraph $D$ we have ${\rm lir}(D)\leq 5$.
\end{tw}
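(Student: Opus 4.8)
The plan is to deduce the bound directly from the relationship between the two notions of local irregularity highlighted in the paragraph preceding the statement, rather than constructing a coloring from scratch. The central observation is that $(+,+)$-local irregularity is a stronger condition than weak local irregularity at the level of an individual color class: if a digraph $H$ satisfies $d^+(u)\neq d^+(v)$ for every arc $uv$ (the $(+,+)$-condition), then in particular the outdegree-indegree pairs $(d^+(u),d^-(u))$ and $(d^+(v),d^-(v))$ differ already in their first coordinate, so $H$ is weak locally irregular. Thus every $(+,+)$-locally irregular digraph is weak locally irregular.

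First I would lift this from single digraphs to colorings. A $(+,+)$-locally irregular coloring of $D$ is, by definition, an arc coloring in which each color class induces a $(+,+)$-locally irregular subdigraph; by the observation above, each such color class is then also weak locally irregular. Hence the very same coloring is a weak locally irregular coloring of $D$, using the same number of colors. This yields the inequality ${\rm lir}(D)\le \mathrm{lir}^{(+,+)}(D)$ for every digraph $D$.

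It then remains only to invoke the cited bound: by the result of Baudon et al.\ in \cite{Baudon Bensmail Przybylo Wozniak2}, every digraph $D$ satisfies $\mathrm{lir}^{(+,+)}(D)\le 5$. Combining this with the inequality just established gives ${\rm lir}(D)\le \mathrm{lir}^{(+,+)}(D)\le 5$, as required.

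There is no genuine obstacle here: the entire content of the theorem is the monotonicity ${\rm lir}(D)\le\mathrm{lir}^{(+,+)}(D)$, which reflects the inclusion of the class of $(+,+)$-locally irregular digraphs inside the class of weak locally irregular ones. The only point requiring care is to check that the degree comparisons in both definitions are taken within the same color class (i.e.\ in the induced subdigraph, not in $D$), so that a color class verified to be $(+,+)$-locally irregular is seen to be weak locally irregular using exactly the same local degree data.
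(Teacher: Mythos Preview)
Your argument is correct and matches the paper's own justification exactly: the theorem is stated as an immediate consequence of the inclusion ``$(+,+)$-locally irregular $\Rightarrow$ weak locally irregular'' together with the bound $\mathrm{lir}^{(+,+)}(D)\le 5$ from~\cite{Baudon Bensmail Przybylo Wozniak2}. No additional ideas are needed.
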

In what follows, we improve this general upper bound for special classes of digraphs. The results are split into two separate subsections. The first one deals with classes of digraphs whose skeletons have chromatic number (the minimum number of colors in a proper vertex coloring of a graph) upper bounded by a constant. The second one then consists of results on digraphs from classes, where the chromatic number of a skeleton cannot be upper bounded by a constant.

\subsection{Digraphs with $k$-chromatic skeletons}

Here, we use the value of the chromatic number of the skeleton $G$ of a digraph $D$, and the natural partition of $V(G)$ into $\chi(G)$ independent sets, as a base for a coloring scheme, according to which arcs of $D$ can be colored to obtain a locally irregular coloring of $D$. 
We start with digraphs, whose skeletons are bipartite or tripartite, i.e., whose skeletons have chromatic numbers equal to 2 or 3; for such digraphs, we prove Conjecture~\ref{main}. 

\begin{tw}\label{dwudzielne}
    If the skeleton of a digraph $D$ is bipartite then ${\rm lir}(D)\leq 2$.
\end{tw}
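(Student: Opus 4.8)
The plan is to exploit the bipartition of the skeleton directly, coloring the arcs of $D$ according to the \emph{direction} in which they cross between the two parts. Write $V(D) = A \cup B$, where $A$ and $B$ are the (independent) parts of the bipartite skeleton $G$; since $G$ is bipartite, every arc of $D$ joins a vertex of $A$ to a vertex of $B$. I would assign color $1$ to every arc directed from $A$ to $B$, and color $2$ to every arc directed from $B$ to $A$. This uses at most two colors, so it remains only to check that each color class induces a weak locally irregular subdigraph.

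The key structural observation is that each color class is ``one-way'' across the bipartition. In the subdigraph $D_1$ induced by color $1$ (arcs from $A$ to $B$), no arc ends in $A$ and no arc starts in $B$; hence every vertex of $A$ has indegree $0$ in $D_1$ and every vertex of $B$ has outdegree $0$ in $D_1$. Consequently, for any arc $xy \in E(D_1)$ with $x \in A$ and $y \in B$, the pair of $x$ is $(d^+(x), 0)$ with $d^+(x) \geq 1$ in $D_1$ (the arc $xy$ itself contributes), while the pair of $y$ is $(0, d^-(y))$. These two pairs already differ in their first coordinate, so the weak local irregularity condition holds for every arc of $D_1$. The class $D_2$ is handled by the symmetric argument, with the roles of $A$ and $B$ (and of indegree and outdegree) interchanged.

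I do not expect a genuine obstacle here: the only thing that must be verified is that no two vertices adjacent within a single color class carry the same outdegree--indegree pair, and the source/sink structure of each class makes this immediate, since the tail of any arc has strictly positive outdegree while its head has outdegree zero in that class. The essential point being used is that bipartiteness of the skeleton forces every arc to cross between $A$ and $B$, so that ``coloring by crossing direction'' cleanly separates the out-contributions from the in-contributions within each class, and thereby automatically distinguishes the endpoints of every monochromatic arc.
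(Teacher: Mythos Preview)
Your proof is correct and follows essentially the same approach as the paper: color arcs by the direction in which they cross the bipartition, and observe that in each color class one side consists of sources and the other of sinks, so the tail of any monochromatic arc has positive outdegree while its head has outdegree zero. The argument and verification are the same as in the paper, up to notation.
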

\begin{proof}
    Let $X, Y$ be the bipartition of vertices of the skeleton of $D$. 
    Color each arc from $X$ to $Y$ red, and each arc from $Y$ to $X$ blue.
    Let $x \in X$ and $y \in Y$.
    If $xy \in E(D)$ then it is red, $x$ has positive red outdegree, and $y$ has its red outdegree equal to 0.
    If $yx \in E(D)$ then it is blue, $x$ has positive blue indegree, and $y$ has its blue indegree equal to 0.
    Therefore, the blue and the red subdigraphs are weak locally irregular, thus, the obtained coloring is feasible.
\end{proof}

\begin{tw}\label{trojdzielne}
    If the skeleton of a digraph $D$ is tripartite then ${\rm lir}(D)\leq 2$.
\end{tw}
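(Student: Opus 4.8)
The plan is to generalize the directed-cut idea from the bipartite case (Theorem~\ref{dwudzielne}) by exploiting a linear order on the three parts of the skeleton. Let $A, B, C$ be the three independent sets of a proper $3$-coloring of the skeleton of $D$, and order them $A \prec B \prec C$. I would color an arc $uv \in E(D)$ red if it runs from an earlier part to a later one (so the red arcs are exactly those of types $A\to B$, $A\to C$, $B\to C$), and blue otherwise (the types $B\to A$, $C\to A$, $C\to B$). The point of this rule is that each color class becomes ``graded'' along the order: red arcs never enter $A$ and never leave $C$, while blue arcs never enter $C$ and never leave $A$.

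Next I would verify that the red subdigraph is weak locally irregular, the blue one then following by the symmetric argument (reverse all arcs and swap the roles of $A$ and $C$). For a red arc $uv$ there are only two possibilities for the part of its tail. If $u \in A$, then $u$ has red indegree $0$, since nothing lies below $A$, whereas $v$ receives $uv$ and so has positive red indegree; hence the pairs $(d^+(u),d^-(u))$ and $(d^+(v),d^-(v))$ computed in the red subdigraph differ in their second coordinate. If instead $u\in B$, then necessarily $v\in C$, so $v$ has red outdegree $0$ while $u$ has positive red outdegree, and the two pairs differ in their first coordinate. Since $u\in C$ cannot be the tail of any red arc, these cases are exhaustive, so every red arc joins vertices with distinct outdegree-indegree pairs, and likewise for blue.

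The observation that makes this work, and which I expect to be the crux of the argument, is that with exactly three linearly ordered parts every crossing arc has either its tail in the bottom part or its head in the top part, guaranteeing that one endpoint has a zero coordinate distinguishing it from the other. This is precisely the feature that fails once there are four or more parts, where an arc between two ``middle'' parts can have both endpoints carrying positive in- and outdegree in its color class, and hence equal pairs. That is why the linear-order scheme settles the tripartite case but does not extend to skeletons of larger chromatic number, which is consistent with the paper treating those classes separately.
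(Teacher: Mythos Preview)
Your proof is correct and is essentially the same as the paper's: you use the identical coloring rule (arcs going forward along a fixed linear order of the three parts are red, the rest blue) and the same verification via zero in-/outdegrees at the extreme parts. The only cosmetic difference is that you organise the case analysis by the part containing the tail, whereas the paper enumerates the three arc types separately.
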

\begin{proof}
    Consider the partition of the vertices of the skeleton of $D$ into independent sets $X$, $Y$, and $Z$. 
    Color every arc from $X$ to $Y$, from $X$ to $Z$, and from $Y$ to $Z$ red. 
    Then, color the remaining arcs, i.e., the arcs from $Y$ to $X$, from $Z$ to $X$, and from $Z$ to $Y$ blue. 
    We show that the red subdigraph is locally irregular (the analogous arguments hold for the blue subdigraph).

    Consider an edge $xy$ from $X$ to $Y$. 
    Clearly, in the red subdigraph of $D$, $d^-(x) = 0$ and $d^-(y) \geq 1$, hence the local irregularity condition holds for the arc $xy$. 
    Suppose that $xz$ is an arc from $X$ to $Z$; similarly to the previous case,   $d^-(x) = 0$ and $d^-(z) \geq 1$ in the red subdigraph of $D$. 
    If $yz$ is an arc from $Y$ to $Z$, $d^+(y) \geq 1$ and $d^+(z) = 0$ in the red subdigraph of $D$.
    Thus, the red subdigraph of $D$ is locally irregular. 
\end{proof}

From Theorem~\ref{dwudzielne}, Theorem~\ref{trojdzielne}, and basic facts about chromatic number of simple graphs, one can observe that Conjecture~\ref{main} holds for digraphs whose skeletons are paths, cycles, trees, unicyclic graphs, cacti (including the ones that are not locally irregular colorable), or outerplanar graphs.

Note that all digraphs whose skeleton is $K_4$ has locally irregular coloring with two colors. To prove this simple observation it suffices to consider two cases. First, the digraph is a tournament with four vertices. It will be a direct consequence of Theorem \ref{tournament}. Second, when a digraph on four vertices has at least one pair of arcs going both directions. In this case we create from a digraph $D$ a multigraph $M$ replacing all arcs by edges (arcs going both directions we replace by two parallel edges). Then we can easily check that each such multigraph $M$ on four vertices has locally irregular coloring with two colors, where multiedges can be colored both colors (in total we need to check 13 multigraphs). Obviously the same coloring of the initial digraph is also locally irregular. Since a digraph whose skeleton is $K_4$ can be easily colored with two colors such that the monochromatic subdigraphs are locally irregular and $K_4$ is the only subcubic graph with chromatic number greater than three, we get that Conjecture \ref{main} is also true for digraphs whose skeletons are subcubic.

Now, we generalize the method from the proof of Theorem \ref{dwudzielne} and Theorem \ref{trojdzielne} and prove that in the case of a digraph whose skeleton has chromatic number at most six, three colors are enough for the locally irregular coloring.
Note that the method, with the larger number of colors, can be used to prove the upper bound for a digraph whose skeleton is $k$-partite, for arbitrary $k$. 
However, the bound is, for larger $k$, further away from the conjectured value two, and it is larger than the general upper bound of five (see Theorem~\ref{theorem_lir_general_upper_bound}), which makes it unusable to prove any more useful results.

\begin{tw}\label{sześciodzielne}
Let $G$ be a skeleton of a digraph $D$. 
If $\chi(G) \leq 6$ then ${\rm lir}(D)\leq 3$.
\end{tw}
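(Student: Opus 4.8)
The plan is to generalize the ordered-partition idea behind Theorem~\ref{trojdzielne}, where a single color realized a ``forward'' tripartite structure. I would first fix a proper coloring of the skeleton $G$ witnessing $\chi(G)\le 6$, giving independent sets $V_1,\dots,V_6$ (padding with empty sets if $\chi(G)<6$); since each $V_i$ is independent, every arc of $D$ joins two distinct parts. The key observation is that one color class can always be taken to be a forward tripartite digraph: if we distribute the parts into three ordered blocks $B_1<B_2<B_3$ and let a color take only arcs going from a lower block to a strictly higher block, then exactly the argument of Theorem~\ref{trojdzielne} shows this class is weak locally irregular (vertices of $B_1$ have indegree $0$, vertices of $B_3$ have outdegree $0$, and every forward arc has its tail in $B_1$ or its head in $B_3$).

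With this building block, the whole problem reduces to a combinatorial design. I would assign to each part $V_i$ a code $\pi^{(i)}=(\pi^{(i)}_1,\pi^{(i)}_2,\pi^{(i)}_3)\in\{1,2,3\}^3$, where $\pi^{(i)}_t$ records the block of $V_i$ in color $t$, and then color an arc from $V_i$ to $V_j$ with some color $t$ for which $\pi^{(i)}_t<\pi^{(j)}_t$, so that in color $t$ the arc runs forward through the blocks. For such a $t$ to exist for every arc, in both possible directions between each pair of parts, I need every ordered pair $(i,j)$ with $i\ne j$ to satisfy $\pi^{(i)}_t<\pi^{(j)}_t$ for some $t$; equivalently, the codes must form an antichain in the coordinatewise order on $\{1,2,3\}^3$. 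The clean choice that handles exactly six parts is to let $\pi^{(1)},\dots,\pi^{(6)}$ be the six permutations of $(1,2,3)$: two distinct permutations of the same multiset cannot be comparable coordinatewise, since equal coordinate sums would force equality, so among the three coordinates both orders occur and every ordered pair of parts is forward in some color. This is precisely why the bound is $6=3!$.

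It then remains to check that each of the three color classes is weak locally irregular, which is immediate from the building block above once one notes that, in color $t$, grouping the parts by the value of $\pi^{(\cdot)}_t$ yields three blocks (here each of size two, since two permutations carry each fixed value in a given coordinate) and that all color-$t$ arcs go forward through them. The three cases are exactly those of Theorem~\ref{trojdzielne}: a tail in $B_1$ has color-$t$ indegree $0$ while its head does not, and a tail in $B_2$ forces its head into $B_3$, which has color-$t$ outdegree $0$ while the tail does not.

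The main obstacle is not the per-color verification, which is routine, but the design step: one must recognize that a single color should be modeled as a forward tripartite structure, and then produce an assignment of parts to blocks across the three colors realizing both directions between every pair of parts. Phrasing this as an antichain condition in $\{1,2,3\}^3$ makes the $3!$ permutations the natural witness and pins down why six parts is the clean threshold this method reaches with three colors.
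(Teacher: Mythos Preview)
Your proposal is correct and follows essentially the same approach as the paper: both fix a proper $6$-coloring of the skeleton, make each color class a ``forward'' tripartite digraph (what the paper calls the poor/medium/rich structure), and verify weak local irregularity exactly as in Theorem~\ref{trojdzielne}. The paper specifies the assignment of parts to blocks via an explicit scheme (a figure decomposing the complete symmetric digraph on six vertices into three such pieces), whereas you construct the same kind of scheme by giving each part a permutation of $(1,2,3)$ and invoking the antichain property; this reformulation is a pleasant conceptual bonus, since it explains transparently why $6=3!$ is the threshold this method reaches with three colors, but the underlying argument is the same.
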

\begin{proof}
    Assume that $\chi(G) = 6$. 
    We will deal with the cases when $\chi(G) \in \{4,5\}$ later. 
    Denote by $X_1, \dots, X_6$ the partition of $V(G)$ into independent sets. 
    We use a coloring scheme depicted in Figure~\ref{fig2}; 
    this, for example, means that every arc from $X_1$ to $X_3$ is red, every arc from $X_3$ to $X_1$ is green, etc. 
    We show that such a coloring scheme yields three locally irregular monochromatic subdigraphs of $D$.
    
    \begin{figure}[h!]
    \centering
    \includegraphics[width=1\textwidth]{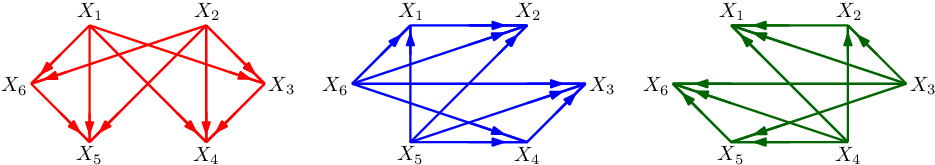}
    \caption{Scheme of a locally irregular decomposition of a digraph whose skeleton is 6-partite.}
    \label{fig2}
    \end{figure}
    
    In the case of the red subdigraph, call vertices from $X_1 \cup X_2$ poor, the vertices from $X_3 \cup X_6$ medium, and the vertices from $X_4 \cup X_5$ rich.
    Clearly, the indegree of every poor vertex is 0, while, on the other hand, the outdegree of every rich vertex is 0. 
    This is independent of how the original digraph $D$ looks. 
    Moreover, no arc in the red subdigraph connects two poor, two medium, or two rich vertices.
    It follows from these simple observations that the locally irregular condition holds for every arc $xy$ of the red subdigraph: if $x$ is rich and $y$ is medium or rich, then $d^-(x) = 0$ and $d^-(y) \geq 1$, and if $x$ is medium and $y$ is rich, then $d^+(x) \geq 1$ and $d^+(y) = 0$.
    
    Hence, the red subdigraph is always locally irregular. This is also true for the blue and green subdigraphs (simply change the roles of $X_1, \dots, X_6$ accordingly). 
    Thus, to prove that this gives a locally irregular decomposition of $D$, it remains to prove that each arc of $D$ is colored with one of the three possible colors. 
    This is easy to see since the coloring scheme in Figure~\ref{fig2} is a decomposition of the complete symmetric digraph on 6 vertices.

    If $\chi(G) \in \{4,5\}$, simply delete vertices $X_4$ and $X_5$, or $X_5$ from the digraphs in Figure~\ref{fig2}; the obtained digraphs define the coloring scheme which yields three locally irregular subdigraphs.
\end{proof}

From the above theorem and the well-known Four color theorem, we immediately get the following corollary.

\begin{cor}
    If the skeleton of a digraph $D$ is planar then ${\rm lir}(D)\leq 3$. 
\end{cor}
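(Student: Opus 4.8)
The plan is to observe that this corollary is an immediate specialization of Theorem~\ref{sześciodzielne}, so the only substantive external ingredient is a bound on the chromatic number of the skeleton. First I would recall that the hypothesis concerns the skeleton $G$ of $D$, which by definition is a \emph{simple} planar graph. The Four Color Theorem then applies directly to $G$ and gives $\chi(G) \leq 4$.

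Next I would chain this with the inequality $4 \leq 6$, so that the hypothesis $\chi(G) \leq 6$ of Theorem~\ref{sześciodzielne} is satisfied. Applying that theorem verbatim yields ${\rm lir}(D) \leq 3$, which is exactly the desired conclusion. Concretely, one invokes the coloring scheme from the proof of Theorem~\ref{sześciodzielne} with the $\chi(G) = 4$ case, which instructs us to delete the parts $X_5$ and $X_6$ from the decomposition of the complete symmetric digraph on six vertices and color the arcs of $D$ accordingly.

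There is effectively no obstacle here: the entire content of the statement has already been established in Theorem~\ref{sześciodzielne}, and the Four Color Theorem is a classical result that I am free to cite. The only point worth flagging is a definitional one, namely that planarity is assumed for the skeleton rather than for $D$ itself; since the skeleton is a simple graph (arcs and multiedges having been collapsed to single edges), the Four Color Theorem is applicable without any modification, and no issues arise from the presence of opposite or parallel arcs in the original digraph $D$.

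\begin{proof}
    Let $G$ be the skeleton of $D$. Since $G$ is a simple planar graph, the Four Color Theorem gives $\chi(G) \leq 4 \leq 6$. Applying Theorem~\ref{sześciodzielne}, we conclude that ${\rm lir}(D) \leq 3$.
\end{proof}
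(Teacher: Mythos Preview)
Your proof is correct and matches the paper's own approach exactly: the paper also derives the corollary immediately from Theorem~\ref{sześciodzielne} together with the Four Color Theorem. (A trivial cosmetic point: in the paper's proof of Theorem~\ref{sześciodzielne} the parts deleted for the $4$-chromatic case are $X_4$ and $X_5$ rather than $X_5$ and $X_6$, but this is just a labeling choice and has no bearing on your argument.)
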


The following lemma is particularly useful to prove an upper bound for a $k$-partite graph, $k \in \{7,8,9\}$, using Theorem~\ref{trojdzielne}.

\begin{lem}
\label{3}
    Let $G$ be a simple graph with $\chi(G)=k$.
    Then $G$ can be written as the edge-disjoint union of $\lceil \log_p k \rceil$ $p$-partite graphs.
\end{lem}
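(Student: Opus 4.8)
The plan is to start from a proper $k$-coloring of $G$, encode its color classes by base-$p$ digit strings, and then group the edges according to a digit position in which the endpoints' codes differ; each digit position will give one $p$-partite graph, and there are exactly $\lceil \log_p k \rceil$ positions.

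First I would fix a proper vertex coloring of $G$ with color classes $X_0, X_1, \dots, X_{k-1}$, which exists since $\chi(G)=k$, and assign to each vertex $v \in X_i$ the label $\ell(v)=i \in \{0,1,\dots,k-1\}$. Set $m=\lceil \log_p k \rceil$, the least integer with $p^m \geq k$, which is precisely the number of base-$p$ digits needed to write every label in $\{0,\dots,k-1\}$. I then write $\ell(v)=\sum_{j=1}^{m} d_j(v)\,p^{j-1}$ with digits $d_j(v) \in \{0,\dots,p-1\}$.

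Next, because the coloring is proper, the endpoints of any edge $uv$ satisfy $\ell(u) \neq \ell(v)$, so their digit strings differ in at least one position. I would assign the edge $uv$ to the graph $G_j$, where $j$ is the smallest index with $d_j(u) \neq d_j(v)$. This sends every edge to exactly one $G_j$ (with $1 \le j \le m$), so $G = G_1 \cup \cdots \cup G_m$ is an edge-disjoint union of $m = \lceil \log_p k \rceil$ graphs. To see that each $G_j$ is $p$-partite, I partition $V(G)$ into the sets $Y_t^{(j)} = \{v : d_j(v)=t\}$ for $t \in \{0,\dots,p-1\}$. By construction every edge of $G_j$ joins two vertices whose $j$-th digits differ, hence no edge of $G_j$ lies inside a single class $Y_t^{(j)}$; thus each $Y_t^{(j)}$ is independent in $G_j$, and this partition witnesses $p$-partiteness.

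Since the construction is essentially a bookkeeping argument, I do not anticipate a genuine obstacle. The only points requiring care are confirming that $\lceil \log_p k \rceil$ is exactly the digit count (i.e., that $p^{m} \ge k > p^{\,m-1}$ forces $m=\lceil \log_p k \rceil$) and that routing each edge to a single chosen digit position keeps the $G_j$ mutually edge-disjoint.
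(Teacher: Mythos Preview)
Your proof is correct and is essentially the same decomposition as the paper's, just presented non-inductively: the paper proceeds by induction on $l=\lceil\log_p k\rceil$, at each step grouping the color classes into $p$ blocks and stripping off the inter-block edges, which is exactly your procedure of peeling off one base-$p$ digit at a time. Your direct digit-encoding formulation unrolls the paper's recursion and yields the identical $p$-partite pieces.
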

\begin{proof}
    Let $l = \lceil \log_p k \rceil$. 
    Then $k\leq p^l$. 
    We show by induction on $l$ that $G$ is the union of $l$ $p$-partite graphs. 
    
    For $l = 1$ this is clear. 
    For $l > 1$, take a partition $V_1, \dots, V_k$ of vertices of $V(G)$ given by a proper vertex $k$-coloring of $G$.
    We divide these independent sets of vertices into $p$ groups, each having at most $\left \lceil k/p \right \rceil$ sets. Formally, $S_i = V_{(i-1)\left \lceil k/p \right \rceil + 1} \cup \dots \cup V_{i\left \lceil k/p \right \rceil}$, for every $i \in \{1, \dots, p-1\}$, and $S_p = V_{(p-1)\left \lceil k/p \right \rceil + 1} \cup \dots \cup V_k$.

    Let $G_1$ be the spanning $p$-partite graph formed by taking all edges of $G$ between vertices from $S_i$ and $S_j$, for $i,j \in \{1, \ldots, p\}$, $i \neq j$.
    Then $G \setminus E(G_1)$ has chromatic number at most  $\lceil \frac{k}{p}\rceil \leq p^{l-1}$. 
    Hence, by induction, $G \setminus E(G_1)$ can be written as the edge-disjoint union of $l-1$ $p$-partite graphs; denote them by $G_2, \ldots, G_l$. 
    Thus $G$ is the union of $l$ $p$-partite graphs $G_1, \dots, G_l$, as required.
\end{proof}

Using the previous lemma, we can write a simple $k$-partite graph, for $k \leq 9$, as an edge-disjoint union at most two 3-partite graphs. 
From Theorem ~\ref{trojdzielne}, the following theorem follows.

\begin{tw}
    If the skeleton of a digraph $D$ is $k$-partite for some $k \in \{7,8,9\}$ then ${\rm lir}(D)\leq 4$.
\end{tw}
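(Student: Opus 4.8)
The plan is to combine Lemma~\ref{3} with Theorem~\ref{trojdzielne} via a two-color-per-piece decomposition. First I would apply Lemma~\ref{3} with $p=3$ to the skeleton $G$. Since $k \in \{7,8,9\}$, we have $\lceil \log_3 k \rceil = 2$ (because $3^1 = 3 < 7$ and $9 = 3^2$), so the lemma writes $G$ as an edge-disjoint union $G = G_1 \cup G_2$ of two $3$-partite (i.e., tripartite) graphs. This is the structural heart of the argument and follows directly from the stated lemma.

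Next I would lift this edge decomposition of the skeleton to an arc decomposition of the digraph $D$. Every arc of $D$ projects to an edge of $G$, which lies in exactly one of $G_1$ or $G_2$; this partitions the arcs of $D$ into two subdigraphs $D_1$ and $D_2$ whose skeletons are (subgraphs of) the tripartite graphs $G_1$ and $G_2$ respectively. Note that one must be slightly careful here: a pair of opposite arcs $xy, yx$ in $D$ collapses to a single edge of $G$, so both of these arcs land in the same $D_i$, which is exactly what we want — each $D_i$ is a genuine digraph whose skeleton is tripartite.

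Then I would apply Theorem~\ref{trojdzielne} separately to $D_1$ and to $D_2$. Since each has a tripartite skeleton, each admits a weak locally irregular coloring using at most $2$ colors. Using two fresh colors for $D_1$ and two more fresh colors for $D_2$ would naively give $4$ colors, and since the color classes within each $D_i$ induce weak locally irregular subdigraphs, and these are color classes of the global coloring of $D$, the resulting $4$-coloring of all arcs of $D$ is weak locally irregular. Hence ${\rm lir}(D) \leq 4$.

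The only real subtlety — and the step I would treat most carefully — is verifying that the arc decomposition is legitimate, namely that the skeleton of each $D_i$ is truly tripartite rather than merely a subgraph that happens to inherit a partition; but since any subgraph of a tripartite graph is tripartite, this is immediate once the edge decomposition of $G$ is in hand. The counting $\lceil \log_3 9 \rceil = 2$ is the other point worth stating explicitly, as it is what caps the number of tripartite pieces at two and hence the color count at four. No genuine obstacle arises, as the heavy lifting is entirely packaged inside Lemma~\ref{3} and Theorem~\ref{trojdzielne}.
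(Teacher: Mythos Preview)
Your proposal is correct and is essentially identical to the paper's own argument: both apply Lemma~\ref{3} with $p=3$ to write the skeleton as an edge-disjoint union of at most two tripartite graphs (since $\lceil \log_3 k \rceil = 2$ for $k\in\{7,8,9\}$), then invoke Theorem~\ref{trojdzielne} on each piece with a disjoint pair of colors, giving ${\rm lir}(D)\leq 4$. The only cosmetic point is that Lemma~\ref{3} is phrased for $\chi(G)=k$ rather than ``$k$-partite'', but since a $k$-partite skeleton has chromatic number at most $k\leq 9$ the bound of two pieces still holds, exactly as in the paper.
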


\subsection{Classes of digraphs without a constant-bound on the chromatic number of a skeleton}

Now we present further results, in this case about the value of the locally irregular chromatic index of digraphs from special classes, in which the chromatic number of a skeleton is not upper-bounded by a constant. 
One such class of digraphs is a class of tournaments, i.e., orientations of complete graphs (arcs going in both directions between two vertices are not allowed). 
For tournaments, we prove that Conjecture~\ref{main} holds.

\begin{tw}\label{tournament}
    For every tournament $T_n$ on $n$ vertices we have ${\rm lir}(T_n)\leq 2$.
\end{tw}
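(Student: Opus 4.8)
The plan is to exploit a feature special to tournaments: since every vertex $v$ of $T_n$ satisfies $d^+(v)+d^-(v)=n-1$, the weak local irregularity condition for an arc $uv$ \emph{in the whole tournament} reduces to $d^+(u)\neq d^+(v)$, because $d^-$ is then determined by $d^+$. Consequently, if the score sequence $d^+$ takes pairwise distinct values, the single-colour (trivial) decomposition already works and ${\rm lir}(T_n)=1$. The only obstruction is a pair of adjacent vertices with equal scores, and in a tournament every two vertices are adjacent; so I first isolate this obstruction and then aim to repair it using a second colour.

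To produce an actual two-colouring I would order the vertices as $v_1,\dots,v_n$ so as to maximize the number of \emph{forward} arcs, i.e. arcs $v_iv_j$ with $i<j$ (a \emph{median order}); such an order is in particular a directed Hamiltonian path, so $v_kv_{k+1}\in E(T_n)$ for every $k$. I colour every forward arc red and every backward arc blue. By symmetry it suffices to show that the red subdigraph is weak locally irregular, that is, that along each red arc $v_iv_j$ (with $i<j$) the endpoints receive distinct pairs of red out- and in-degrees. Here the red out-degree of $v_k$ counts its forward wins (successors it beats) and its red in-degree counts its forward losses (predecessors beating it).

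The heart of the argument is a monotonicity lemma: in a median order the red out-degree is non-increasing and the red in-degree is non-decreasing in $k$. I would derive this from the defining exchange property — sliding a single vertex forward or backward cannot create new forward arcs — which yields, for every $k$ and every initial window of successors (respectively final window of predecessors), that $v_k$ beats at least half of them (respectively loses to at least half of them). Granting the lemma, along a red arc $v_iv_j$ the two pairs can agree only if both the red out-degree and the red in-degree are constant on the whole interval $[i,j]$; I expect the main obstacle to be excluding this degenerate case, which I would do by examining the consecutive arc $v_iv_{i+1}$ (always forward, hence red) together with the above domination inequalities, forcing a strict change across it. The symmetric statement handles the blue subdigraph, completing the decomposition into two weak locally irregular digraphs. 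Should the degenerate case resist this direct analysis, the fallback plan is an induction on $|V(T_n)|$ that keeps all arcs between distinct score classes red and recursively two-colours each equal-score sub-tournament, the difficulty there being to prevent the recoloured internal arcs from creating degree collisions across score classes.
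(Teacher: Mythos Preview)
Your central \emph{monotonicity lemma} is false: in a median order the forward (red) out-degree need not be non-increasing. Take the tournament on $\{v_1,\dots,v_5\}$ with arc set
\[
\{v_1v_2,\ v_1v_3,\ v_2v_3,\ v_2v_4,\ v_2v_5,\ v_3v_4,\ v_3v_5,\ v_4v_5,\ v_4v_1,\ v_5v_1\}.
\]
The order $v_1,v_2,v_3,v_4,v_5$ has eight forward arcs and two backward arcs; since the tournament contains the arc-disjoint $3$-cycles $v_1v_3v_4v_1$ and $v_1v_2v_5v_1$, no ordering has fewer than two backward arcs, so this is a genuine median order. Yet the forward out-degrees read $2,3,2,1,0$, increasing from $v_1$ to $v_2$. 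The domination inequality you quote (``$v_k$ beats at least half of any initial window of successors'') is correct, but it only lower-bounds the forward out-degree of $v_k$; it never compares $v_k$ with $v_{k+1}$. Once monotonicity fails, your reduction of a red conflict to ``both red degrees constant on $[i,j]$'' collapses, and you have no mechanism to rule out two forward-adjacent vertices with equal red pairs. (Amusingly, in the example the forward/backward colouring still happens to be weak locally irregular --- the red pairs are $(2,0),(3,1),(2,2),(1,2),(0,3)$ --- so the approach may be salvageable, but not via the lemma you propose.) Your fallback plan is, as you acknowledge, only a heuristic with an unresolved obstruction.

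The paper's proof is structurally different and avoids median orders entirely. It first builds an \emph{almost} locally irregular $2$-edge-colouring of the undirected skeleton $K_n$, independent of the orientation: adding vertices one at a time and colouring all new edges with the colour given by the current parity yields a colouring whose only defective edge is $v_1v_2$. The tournament orientation enters only once, in choosing $v_1$ with $d^+(v_1)\ge d^-(v_1)$ and then labelling the remaining vertices so that every arc between $v_1$ and an even-indexed vertex points out of $v_1$. This forces all red arcs at $v_1$ to be outgoing, so the red in-degree of $v_1$ is $0$ while that of $v_2$ is positive; the single bad edge is thus repaired by the orientation, and the colouring of $T_n$ is weak locally irregular.
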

\begin{proof}    
    Let $v_1$ be a vertex of $T_n$ such that $d^+(v_1) \geq d^-(v_1)$ (such a vertex exists since $\sum_{v \in V(T_n)}d^+(v) = \sum_{v \in V(T)} d^-(v)$).
    Thus, $d^+(v_1) \geq \left\lfloor (n-1)/2\right\rfloor$. 
    Denote the remaining vertices of $T_n$ by $v_2, \dots, v_n$ such that $v_1v_{2i}$ is an arc for each $i \in \{1, \dots, \left\lfloor n/2 \right\rfloor\}$.
        
    We now describe the coloring of the skeleton $K_n$ of $T_n$: every edge $v_{2i}v_j$ for $j < 2i$ is colored red, and every edge $v_{2i+1}v_j$ for $j < 2i+1$ is colored blue. 
    We claim that such an edge coloring is almost locally irregular coloring of $K_n$, i.e., there is only a single edge of $K_n$, namely $v_1v_2$, where the locally irregular property is violated.
    To see this, consider a coloring procedure where, in the $i$-th step, we color all arcs between $v_i$ and all the vertices $v_j$ where $j < i$.
    After the $i$-th step, colored edges induce $K_i$. 
    Suppose that after the coloring of $K_{i-1}$ right before the $i$-th step is almost locally irregular with the exception of $v_1v_2$ (this is clearly true for $i \leq 4$).
    Observe that if $i$ is even, the red degree of every vertex is at most $i-3$ (all edges incident to $v_{i-1}$ in $K_{i-1}$ are blue). 
    After adding $v_i$ and coloring all edges incident to $v_i$ red, $v_i$ has red degree $i-1$, and the remaining vertices have red degrees at most $i-2$. 
    Moreover, red degree of every vertex different from $v_i$ is raised by $1$, which means that the only edge for which the local irregular condition does not hold is $v_1v_2$.
    An analogous argument (for blue degrees) holds in the case when $i$ is odd.
    
    Note that, in such a coloring of $K_n$, an edge $v_1v_j$ is red if $j$ is even and it is blue if $j$ is odd.
    In $T_n$ this translates to a coloring where an arc between $v_1$ and $v_j$ is red if and only if $j$ is even.
    Hence, all red arcs incident to $v_1$ in such a coloring are outgoing from $v_1$.
    Thus, the red indegree of $v_1$ is zero, while the red indegree of $v_2$ (due to the arc $v_1v_2$) is positive. 
    Therefore, in the coloring of $T_n$, vertices $v_1$ and $v_2$ are distinguished by their red outdegree-indegree pairs.
    For other pairs of vertices, the weak local irregularity condition holds since a local irregularity condition holds for these vertices in $K_n$.
    This completes the proof.
\end{proof}

In~\cite{Borowiecki Grytczuk Pilśniak} the following theorem was proved; we use it to prove that for digraphs whose skeletons are complete graphs, three colors are enough to obtain a locally irregular coloring. 

\begin{tw}[Corollary 1 in~\cite{Borowiecki Grytczuk Pilśniak}]\label{Borowiecki1}
    Every graph $G$ can be oriented so that the indegrees of every two adjacent vertices are different.
\end{tw}

\begin{lem}\label{2}
    Let $G$ be a skeleton of a digraph $D$.
    If ${\rm lir}(D')\leq k$ for every orientation $D'$ of $G$ then ${\rm lir}(D)\leq k+1$.
\end{lem}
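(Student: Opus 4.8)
The plan is to write $D$ as the arc-disjoint union of an orientation $D'$ of its skeleton $G$ together with a single extra oriented graph $H$, color $D'$ with the $k$ colors promised by the hypothesis, and spend one fresh color on all of $H$. The only thing that makes this non-trivial is guaranteeing that $H$ can be chosen so that it is itself weak locally irregular; this is exactly where Theorem~\ref{Borowiecki1} enters.

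First I would isolate the ``symmetric part'' of $D$. Let $G_2$ be the subgraph of $G$ consisting of those edges $uv$ for which both arcs $uv$ and $vu$ appear in $D$ (the digons). By Theorem~\ref{Borowiecki1}, $G_2$ admits an orientation $H$ in which every two adjacent vertices have different indegrees. Since a difference in indegree already forces a difference in the outdegree-indegree pair, such an $H$ is weak locally irregular. For each digon of $D$ I keep in $H$ the arc prescribed by this orientation and assign the opposite arc, together with every single arc of $D$, to $D'$. Then each edge of $G$ receives exactly one arc in $D'$, so $D'$ is a genuine orientation of $G$, and $E(D)$ is the disjoint union $E(D')\cup E(H)$.

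Now apply the hypothesis: since $D'$ is an orientation of $G$, we have ${\rm lir}(D')\le k$, so fix a weak locally irregular coloring of $D'$ with colors $1,\dots,k$ and color every arc of $H$ with the new color $k+1$. It remains to check feasibility in $D$. The crucial observation is that the subdigraph induced by any color class depends only on the arcs of that class and the vertices they meet; since color classes $1,\dots,k$ consist of exactly the same arcs in $D$ as in $D'$, they induce identical subdigraphs and hence stay weak locally irregular. The class of color $k+1$ is precisely $H$, which is weak locally irregular by construction. Thus the coloring is a weak locally irregular coloring of $D$ using $k+1$ colors, giving ${\rm lir}(D)\le k+1$.

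The main obstacle, and the heart of the argument, is the treatment of the digons: an arbitrary orientation of $G_2$ need not be weak locally irregular (a directed triangle is the smallest obstruction), so one cannot simply dump all leftover arcs into one color without justification. Theorem~\ref{Borowiecki1} resolves this, and the fact that the hypothesis is assumed for every orientation of $G$ is what lets us pick the complementary orientation $D'$ freely without losing the bound ${\rm lir}(D')\le k$.
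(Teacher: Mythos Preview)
Your proof is correct and follows essentially the same approach as the paper: isolate the digon part of $D$, use Theorem~\ref{Borowiecki1} to orient its skeleton so that adjacent vertices have distinct indegrees (your $H$ is the paper's $D_1''$), put the opposite arcs together with the single arcs to form an orientation $D'$ of $G$, color $D'$ with $k$ colors by hypothesis, and use one fresh color on $H$. Your write-up is slightly more detailed in justifying why the color classes remain weak locally irregular inside $D$, but the argument is the same.
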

\begin{proof}
    Let $D''$ be the subdigraph of $D$ induced on arcs going in both directions 
    (i.e., if $xy,yx \in E(D)$ then $x, y \in V(D'')$ and $xy,yx \in E(D'')$). 
    From Theorem~\ref{Borowiecki1} we have a decomposition of $D''$ into a locally irregular digraph $D_1''$ and a remaining digraph $D_2''$. 
    A digraph $D'=(D \setminus D'')\cup D_2''$ is therefore an orientation of $G$. 
    Hence, from the assumption, we have ${\rm lir}(D') \leq k$. Using a new color on the arcs of $D''_1$, we get a locally irregular coloring of $D$ with at most $k+1$ colors.
\end{proof}

\begin{tw}
    If the skeleton of a digraph $D$ is a complete graph then ${\rm lir}(D)\leq 3$.  
\end{tw}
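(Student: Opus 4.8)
The plan is to combine the tournament result (Theorem~\ref{tournament}) with the ``one extra color for two-way arcs'' mechanism of Lemma~\ref{2}. The key observation is that when the skeleton of $D$ is the complete graph $K_n$, any orientation $D'$ of this skeleton is itself a tournament on $n$ vertices, since a tournament is by definition an orientation of a complete graph (arcs in both directions between a pair of vertices are forbidden).

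First I would invoke Lemma~\ref{2} with $G = K_n$. According to that lemma, to conclude ${\rm lir}(D) \leq 3$ it suffices to verify its hypothesis with $k = 2$, namely that ${\rm lir}(D') \leq 2$ holds for \emph{every} orientation $D'$ of $K_n$. To establish this hypothesis I would use the identification from the previous paragraph: each such $D'$ is a tournament $T_n$, so Theorem~\ref{tournament} applies directly and yields ${\rm lir}(D') = {\rm lir}(T_n) \leq 2$. Combining these two facts completes the argument, since Lemma~\ref{2} then gives ${\rm lir}(D) \leq k + 1 = 3$.

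Since both ingredients are already in hand, there is essentially no genuine obstacle here; the entire content reduces to recognizing the correct reduction. The only point to state carefully is the identification of orientations of $K_n$ with tournaments, which is immediate from the definitions, so that Theorem~\ref{tournament} is indeed applicable to every orientation arising in the application of Lemma~\ref{2}. In fact the whole proof can be compressed into a single sentence: every orientation of $K_n$ is a tournament and hence satisfies ${\rm lir} \leq 2$ by Theorem~\ref{tournament}, whence ${\rm lir}(D) \leq 3$ by Lemma~\ref{2}.
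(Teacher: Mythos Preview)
Your proposal is correct and matches the paper's proof essentially verbatim: observe that every orientation of $K_n$ is a tournament, apply Theorem~\ref{tournament} to get ${\rm lir}(D')\leq 2$, and then invoke Lemma~\ref{2} to conclude ${\rm lir}(D)\leq 3$.
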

\begin{proof}
    Note that every orientation $D'$ of the skeleton of $D$ is a tournament, hence, Theorem~\ref{tournament} yields ${\rm lir}(D')\leq 2$.
    The result then follows directly from Lemma~\ref{2}.
\end{proof}

For every simple graph $G$ with large minimum degree, namely $\delta(G) \geq 10^{10}$, Przybyło proved that ${\rm lir}(G)\leq 3$. 
As an easy corollary (using Lemma~\ref{2}) of this result, we obtain:
\begin{cor}
    Let $D$ be a digraph such that $d^+(v) + d^-(v) \geq 2\cdot 10^{10}$ for each vertex $v \in V(D)$.
    Then ${\rm lir}(D)\leq 4$.
\end{cor}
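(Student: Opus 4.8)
The plan is to reduce the statement about digraphs to the cited result of Przybyło on simple graphs, using Lemma~\ref{2} as the bridge. The final corollary asserts that if a digraph $D$ satisfies $d^+(v) + d^-(v) \geq 2\cdot 10^{10}$ for every vertex $v$, then ${\rm lir}(D)\leq 4$. The key observation is that $d^+(v) + d^-(v)$ counts all arcs incident to $v$ in $D$, and this quantity controls the degree of $v$ in the skeleton $G$ of $D$, so a large lower bound on the former forces a large minimum degree on the latter.

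First I would note that if the skeleton $G$ of $D$ is locally irregularly colorable with few colors, then Lemma~\ref{2} lets me pass to $D$ at the cost of one extra color. Specifically, Lemma~\ref{2} says that if ${\rm lir}(D')\leq k$ for every orientation $D'$ of $G$, then ${\rm lir}(D)\leq k+1$. So it suffices to establish that every orientation $D'$ of $G$ satisfies ${\rm lir}(D')\leq 3$; combined with Lemma~\ref{2} this yields ${\rm lir}(D)\leq 4$, which is exactly the target bound. Since any orientation $D'$ of $G$ is an oriented graph whose skeleton is again $G$, and since (as observed in the excerpt) any orientation of a locally irregular simple graph is weak locally irregular, it in fact suffices to show that $G$ itself is locally irregularly colorable with at most $3$ colors in the simple-graph sense; then the identical edge-partition, read as an arc-partition of $D'$, gives three weak locally irregular subdigraphs.

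The next step is to verify the minimum-degree hypothesis of Przybyło's theorem. I would observe that for every vertex $v$, the skeleton degree $\deg_G(v)$ satisfies $\deg_G(v) \geq \tfrac{1}{2}\bigl(d^+(v) + d^-(v)\bigr)$, because each edge of $G$ arises from at least one arc of $D$ and at most two arcs (the pair going both ways between two vertices collapses to a single edge). Hence $\deg_G(v) \geq \tfrac{1}{2}\cdot 2\cdot 10^{10} = 10^{10}$, so $\delta(G) \geq 10^{10}$. Przybyło's result then guarantees ${\rm lir}(G)\leq 3$ as a simple graph. Applying the orientation observation and then Lemma~\ref{2} completes the argument.

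The main obstacle, such as it is, lies in the bookkeeping of the degree inequality rather than in any deep idea: one must be careful that the factor of $2$ in the hypothesis $d^+(v)+d^-(v)\geq 2\cdot 10^{10}$ is precisely what compensates for the possible halving when two opposite arcs collapse into a single skeleton edge. Since the coefficient is chosen to make $\delta(G)\geq 10^{10}$ hold in the worst case where every incident arc-pair is doubled, the bound is tight for the reduction and no further case analysis is needed. The remainder is a direct invocation of the previously stated results.
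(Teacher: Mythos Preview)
Your proposal is correct and follows essentially the same route as the paper: verify $\delta(G)\geq 10^{10}$ for the skeleton $G$ via the inequality $\deg_G(v)\geq \tfrac12(d^+(v)+d^-(v))$, invoke Przyby\l o's bound ${\rm lir}(G)\leq 3$, use the observation that any orientation of a locally irregular simple graph is weak locally irregular, and then apply Lemma~\ref{2} to gain one extra color. The paper merely states the corollary as an easy consequence of these two ingredients without spelling out the degree bookkeeping, which you have filled in correctly.
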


Next we present a constant upper bound on locally irregular chromatic index of any orientation of a regular graph. In~\cite{Baudon Bensmail Przybylo Wozniak2}, to prove that ${\rm lir}^{(+,+)}(D) \leq 5$ for each digraph $D$, authors showed that the digraph $D$ can be always decomposed into two acyclic digraphs, one of which is \textit{outdegree-decreasing}, i.e., its vertices can be labeled $v_1, \dots, v_n$ such that:
\begin{itemize}
    \item[1.] $i \leq j$ whenever $v_iv_j$ is an arc, 
    \item[2.] $d^+(v_i) \geq d^+(v_j)$ for each $i < j$.
\end{itemize}
Then they proved that ${\rm lir}^{(+,+)}(D) \leq 2$ whenever $D$ is an outdegree-decreasing acyclic digraph, from which we have:
\begin{lem}[Corollary of Lemma 3.6 from~\cite{Baudon Bensmail Przybylo Wozniak2}]\label{lemma_degree_decreasing}
    If $D$ is an outdegree-decreasing acyclic digraph then ${\rm lir}(D) \leq 2$.
\end{lem}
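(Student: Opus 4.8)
The plan is to obtain this statement directly from the corresponding result on $(+,+)$-local irregularity, exploiting the fact, already noted earlier in this paper, that weak local irregularity is merely a weakening of $(+,+)$-local irregularity. In other words, I would not attempt a fresh coloring construction, but instead show that any coloring witnessing ${\rm lir}^{(+,+)}(D) \leq 2$ simultaneously witnesses ${\rm lir}(D) \leq 2$, and then quote the cited bound.

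The key observation is the following. Fix any arc coloring of $D$ and view one of its color classes as a monochromatic subdigraph. For an arc $xy$ of this subdigraph, the $(+,+)$-condition $d^+(x) \neq d^+(y)$ compares only the first coordinates of the outdegree-indegree pairs. Hence whenever $d^+(x) \neq d^+(y)$ holds, we automatically have $(d^+(x), d^-(x)) \neq (d^+(y), d^-(y))$, so a $(+,+)$-locally irregular subdigraph is in particular weak locally irregular. Applying this color class by color class shows that every $(+,+)$-locally irregular coloring is also a weak locally irregular coloring, and therefore ${\rm lir}(D) \leq {\rm lir}^{(+,+)}(D)$ for every digraph $D$.

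It then remains only to invoke the cited result: the authors of~\cite{Baudon Bensmail Przybylo Wozniak2} proved (as recalled just above) that ${\rm lir}^{(+,+)}(D) \leq 2$ whenever $D$ is an outdegree-decreasing acyclic digraph. Combining this with the inequality ${\rm lir}(D) \leq {\rm lir}^{(+,+)}(D)$ immediately yields ${\rm lir}(D) \leq 2$, as claimed.

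On this route there is essentially no obstacle to overcome, since the whole combinatorial substance — the construction of a two-coloring in which adjacent vertices of the same color receive distinct outdegrees, crucially using the labeling $v_1, \dots, v_n$ with $d^+(v_i) \geq d^+(v_j)$ for $i < j$ — is already carried by the quoted Lemma 3.6. The only point I would verify with care is the implication between the two notions of irregularity, namely that in both conditions the degrees are taken within the same monochromatic subdigraph rather than in the whole of $D$; once this is noted the argument is immediate. Should a self-contained proof be wanted instead, the difficulty would shift entirely to reproving the $(+,+)$-bound, where the outdegree-decreasing acyclic structure is precisely what permits a greedy two-coloring to separate the outdegrees of adjacent vertices.
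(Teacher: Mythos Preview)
Your proposal is correct and matches the paper's approach exactly: the paper does not give a separate proof but simply records, in the sentence preceding the lemma, that the authors of~\cite{Baudon Bensmail Przybylo Wozniak2} proved ${\rm lir}^{(+,+)}(D)\leq 2$ for outdegree-decreasing acyclic digraphs, and then states the lemma as an immediate corollary via the implication ${\rm lir}(D)\leq {\rm lir}^{(+,+)}(D)$ that you spell out.
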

Note that we can define a \textit{indegree-increasing digraph} as a digraph whose vertices can be labeled $v_1, \dots, v_n$ such that:
\begin{itemize}
    \item[1.] $i \leq j$ whenever $v_jv_i$ is an arc,
    \item[2.] $d^-(v_i) \leq d^-(v_j)$ for each $i \leq j$. 
\end{itemize}
Observe that changing the orientations of all arcs in an indegree-increasing acyclic digraph yields an outdegree-decreasing acyclic digraph. 
Hence, we get the immediate corollary of Lemma~\ref{lemma_degree_decreasing}:
\begin{cor}\label{lemma_degree_increasing}
    If $D$ is an indegree-increasing acyclic digraph then ${\rm lir}(D) \leq 2$.
\end{cor}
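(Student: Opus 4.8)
The plan is to reduce Corollary~\ref{lemma_degree_increasing} directly to Lemma~\ref{lemma_degree_decreasing} via the arc-reversal operation, exactly as the sentence preceding the statement already suggests. First I would fix an indegree-increasing acyclic digraph $D$ together with a witnessing vertex labeling $v_1, \dots, v_n$ satisfying the two defining conditions, and let $\overline{D}$ denote the digraph obtained from $D$ by reversing the orientation of every arc. The key structural observation is that this reversal converts the two conditions defining ``indegree-increasing'' into the two conditions defining ``outdegree-decreasing'', \emph{using the same labeling}.

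Next I would verify the two conditions for $\overline{D}$ in order. For acyclicity: reversing all arcs of an acyclic digraph yields an acyclic digraph, since a directed cycle in $\overline{D}$ would correspond to a directed cycle in $D$ traversed backwards. For condition~1 (the topological-order condition): in $\overline{D}$, the arc $v_iv_j$ is present exactly when $v_jv_i$ is an arc of $D$, and the indegree-increasing hypothesis guarantees $v_jv_i \in E(D)$ forces $j \geq i$, i.e., $i \leq j$ whenever $v_iv_j \in E(\overline{D})$, as required for outdegree-decreasing. For condition~2 (the monotone-degree condition): reversal swaps indegrees and outdegrees, so $d^+_{\overline{D}}(v_i) = d^-_{D}(v_i)$ for every $i$; the inequality $d^-_D(v_i) \leq d^-_D(v_j)$ for $i \leq j$ then becomes $d^+_{\overline{D}}(v_i) \geq d^+_{\overline{D}}(v_j)$ for $i \leq j$ after noting the index relabeling, matching the outdegree-decreasing requirement. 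Thus $\overline{D}$ is an outdegree-decreasing acyclic digraph.

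Finally I would invoke Lemma~\ref{lemma_degree_decreasing} to obtain ${\rm lir}(\overline{D}) \leq 2$, and then transfer the bound back to $D$. Here the point is that a weak locally irregular coloring of $\overline{D}$ with two colors pulls back to one of $D$: reversing arcs merely swaps the roles of indegree and outdegree within every color class, and the weak local irregularity condition ``$(d^+(x),d^-(x)) \neq (d^+(y),d^-(y))$ for every arc $xy$'' is symmetric under simultaneously swapping the two coordinates of the pair and reversing the arc direction. Hence the same partition of arcs into color classes that certifies ${\rm lir}(\overline{D}) \leq 2$ certifies ${\rm lir}(D) \leq 2$.

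I do not expect any genuine obstacle here; the statement is a formal corollary and the proof is a bookkeeping argument about arc reversal. The only point requiring a moment of care is making the index bookkeeping in condition~2 precise (whether the reversal should also reverse the labeling order or keep it fixed), and confirming that the weak local irregularity property is preserved under reversal of every arc in a color class rather than needing the color classes to be rebuilt; both are routine once the symmetry of the outdegree--indegree pair under reversal is stated explicitly.
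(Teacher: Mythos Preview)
Your approach is exactly the paper's: reverse every arc to turn the indegree-increasing acyclic digraph into an outdegree-decreasing one, invoke Lemma~\ref{lemma_degree_decreasing}, and pull the two-color weak locally irregular decomposition back along the reversal. The only delicate point you already flag---whether the witnessing labeling stays fixed or is reversed when verifying condition~2---is the same bookkeeping the paper leaves implicit, and it does not affect correctness.
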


Each digraph $D$ which is an orientation of regular graph can be decomposed into two acyclic digraphs, one of which is outdegree-decreasing and the other one is indegree-increasing. To prove this, we use a greedy method described in~\cite{Baudon Bensmail Przybylo Wozniak2}:
Start with a vertex $v_1$ with the largest outdegree, and in $i$-th step, for $i \geq 2$, take a vertex from $D \setminus \{v_1, \dots, v_{i-1}\}$ with the largest outdegree and label it $v_i$. 
A digraph $D_{\text{dec}}$ induced on all arcs $v_iv_j$ of $D$ such that $i \leq j$ is acyclic and outdegree-decreasing. 
Let us consider a subdigraph $D_{\text{inc}}$ of $D$ induced on the remaining arcs. 
We claim that $D_{\text{inc}}$ is acyclic and indegree-increasing.
To show that $D_{\text{inc}}$ is acyclic, suppose that there is a cycle, and let $v_i$ be the vertex which was labeled first among all vertices on this cycle. 
All arcs outgoing from $v_i$ to vertices $v_j$ where $j > i$ are present in $D_{\text{dec}}$. 
Hence, non of such arcs are present in $D_{\text{inc}}$. 
But if $v_i$ is on a cycle in $D_{\text{inc}}$, there should be at least one arc outgoing from $v_i$ in $D_{\text{inc}}$, a contradiction.
The fact that $D_{\text{inc}}$ is indegree-increasing follows directly from the assumption that $D$ is an orientation of a regular graph and the construction of $D_{\text{dec}}$ and $D_{\text{inc}}$.

Hence, an orientation of a regular graph can be decomposed into an outdegree-decreasing and an indegree-increasing acyclic subdigraphs, and from Lemma~\ref{lemma_degree_decreasing} and Corollary~\ref{lemma_degree_increasing} we get:
\begin{tw}
For every digraph $D$ which is an orientation of regular graph we have ${\rm lir}(D)\leq 4$.  
\end{tw}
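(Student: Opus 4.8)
The plan is to assemble the theorem directly from the structural decomposition established in the paragraph immediately preceding the statement, together with the two coloring results already at hand. That discussion shows that any orientation $D$ of a regular graph splits into an outdegree-decreasing acyclic subdigraph $D_{\text{dec}}$ and an indegree-increasing acyclic subdigraph $D_{\text{inc}}$, with every arc of $D$ lying in exactly one of the two parts. I would begin by recalling this decomposition and pointing out that regularity is exactly what makes it work: the greedy labeling by decreasing outdegree forces the indegrees to be non-decreasing along the labels (since $d^+(v)+d^-(v)$ is constant), which is what guarantees that the leftover arcs form an indegree-increasing subdigraph.

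Next I would apply Lemma~\ref{lemma_degree_decreasing} to $D_{\text{dec}}$ to obtain a weak locally irregular coloring of $D_{\text{dec}}$ using at most two colors, say from the palette $\{1,2\}$, and apply Corollary~\ref{lemma_degree_increasing} to $D_{\text{inc}}$ to obtain a weak locally irregular coloring of $D_{\text{inc}}$ using at most two colors from the \emph{disjoint} palette $\{3,4\}$. Taking the union of these two colorings yields an arc coloring of all of $D$ with at most four colors, since the two parts partition $E(D)$.

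Finally I would check that the combined coloring is weak locally irregular as a coloring of $D$. The key observation is that, because the palettes are disjoint, every color class of the combined coloring lies entirely inside $D_{\text{dec}}$ or entirely inside $D_{\text{inc}}$, and the weak local irregularity of a color class is determined solely by the arcs belonging to that class (the relevant outdegree--indegree pairs are computed within the monochromatic subdigraph). Hence each class remains weak locally irregular when viewed inside $D$, giving ${\rm lir}(D)\leq 4$. I do not expect any genuine obstacle in this combination step; the real content is the decomposition itself, in particular the verification that $D_{\text{inc}}$ is simultaneously acyclic and indegree-increasing, which has already been carried out above and for which regularity is indispensable.
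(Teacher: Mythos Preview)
Your argument is correct and is exactly the approach taken in the paper: decompose $D$ into an outdegree-decreasing acyclic subdigraph and an indegree-increasing acyclic subdigraph via the greedy labeling (where regularity forces the indegree sequence to be non-decreasing), and then apply Lemma~\ref{lemma_degree_decreasing} and Corollary~\ref{lemma_degree_increasing} on disjoint two-color palettes.
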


A digraph $D$ is called \textit{symmetric} if, for each pair of vertices $u,v \in V(D)$, $uv$ is an arc of $D$ if and only if $vu$ is an arc of $D$. 
We use the following theorem, proved by Borowiecki, Grytczuk, and Pilśniak in~\cite{Borowiecki Grytczuk Pilśniak} to prove that Conjecture~\ref{main} is true for symmetric digraphs.

\begin{tw}[Corollary 2 in~\cite{Borowiecki Grytczuk Pilśniak}]\label{Borowiecki2}
    Every undirected graph $G$ has an orientation in which every two adjacent vertices have different balanced degrees.
\end{tw}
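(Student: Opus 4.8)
The plan is to prove the statement by an extremal argument that is self-contained and does not even need the cited $2$-weighting theorem. Among all orientations $\vec{G}$ of $G$, I would choose one maximizing the potential $\Phi(\vec{G}) = \sum_{v \in V(G)} \sigma(v)^2$, where $\sigma(v) = d^+(v) - d^-(v)$ denotes the balanced degree of $v$ in the current orientation. Since $|\sigma(v)| \leq \deg_G(v)$ for every vertex, the potential $\Phi$ takes only finitely many values and is bounded above, so a maximizing orientation $\vec{G}^{\ast}$ exists. I claim that $\vec{G}^{\ast}$ already has the required property, and in fact something stronger: every arc points from a vertex of larger balanced degree to one of strictly smaller balanced degree.

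To establish this, I would analyze the effect of reversing a single arc. Fix an arc $uv$ of $\vec{G}^{\ast}$ and let $\vec{G}'$ be obtained by reversing it to $vu$. Reversing $uv$ decreases $d^+(u)$ by one and increases $d^-(u)$ by one, so $\sigma(u)$ drops by $2$; symmetrically $\sigma(v)$ increases by $2$, while all other balanced degrees are unchanged. Writing $\sigma$ for the balanced degrees in $\vec{G}^{\ast}$, this gives $\Phi(\vec{G}') - \Phi(\vec{G}^{\ast}) = (\sigma(u)-2)^2 - \sigma(u)^2 + (\sigma(v)+2)^2 - \sigma(v)^2 = 4\bigl(\sigma(v) - \sigma(u)\bigr) + 8$.

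By maximality of $\Phi(\vec{G}^{\ast})$ this difference is at most $0$, which forces $\sigma(v) - \sigma(u) \leq -2$, i.e. $\sigma(u) \geq \sigma(v) + 2$. Hence for every arc $uv$ of $\vec{G}^{\ast}$ the tail has strictly larger balanced degree than the head, so in particular $\sigma(u) \neq \sigma(v)$ for every edge of $G$, which is exactly the desired conclusion.

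The entire proof collapses to the single local computation above, so the only real obstacle is guessing the correct potential: summing the squares of the balanced degrees is precisely what makes a single arc reversal interact with exactly the two endpoints and yield the clean inequality $\sigma(u) \geq \sigma(v) + 2$. Once this potential is in hand, finiteness gives a maximizer and the one-line reversal computation finishes everything; no structural hypotheses on $G$ (bipartiteness, regularity, and so on) are used, and the argument applies verbatim to each connected component and to multigraphs.
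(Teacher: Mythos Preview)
Your argument is correct. The paper itself does not prove this statement: it is quoted verbatim as Corollary~2 of Borowiecki, Grytczuk, and Pil\'sniak and used as a black box (only a \emph{modification} of a related result from that reference, with arc weights in $\{1,k\}$, is reproved later via a chip-configuration procedure).

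Your route is genuinely different from the chip-firing method of the cited source. Maximizing $\sum_v \sigma(v)^2$ and analyzing a single arc reversal is shorter and entirely elementary; it also yields the stronger conclusion that in the extremal orientation every arc $uv$ satisfies $\sigma(u)\ge \sigma(v)+2$, whereas the chip argument only guarantees $\sigma(u)\neq\sigma(v)$. On the other hand, the chip-configuration technique extends naturally to weighted versions (assigning labels from $\{1,k\}$ to arcs so that the weighted balanced degrees distinguish neighbours), which is exactly what the paper needs later for Eulerian digraphs; your potential argument does not obviously generalize in that direction, since with nontrivial weights a single reversal or relabeling no longer interacts so cleanly with $\sum_v \sigma(v)^2$.
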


\begin{tw}\label{lir_symmetric}
    For every symmetric digraph $D$ we have ${\rm lir}(D)\leq 2$.  
\end{tw}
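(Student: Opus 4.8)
The plan is to exploit the fact that the arcs of a symmetric digraph split canonically into an orientation of its skeleton together with the reverse of that orientation, and then to feed this into Theorem~\ref{Borowiecki2}. Let $G$ be the skeleton of $D$; since $D$ is symmetric, $E(D)$ consists exactly of the pairs $\{uv, vu\}$ for every edge $uv \in E(G)$. By Theorem~\ref{Borowiecki2}, the undirected graph $G$ admits an orientation $H$ in which every two adjacent vertices have different balanced degrees, i.e. $\sigma_H(u) \neq \sigma_H(v)$ whenever $uv \in E(G)$. This orientation $H$ will serve as the template for a $2$-coloring of the arcs of $D$.

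Concretely, I would color an arc $uv \in E(D)$ \emph{red} if $H$ orients the corresponding edge as $u \to v$, and \emph{blue} otherwise. Because $D$ is symmetric, for each edge $uv \in E(G)$ exactly one of the two arcs between $u$ and $v$ is red and the other is blue. Consequently the red subdigraph is precisely $H$, and the blue subdigraph is precisely the reverse orientation $\overleftarrow{H}$. Both are orientations of $G$ (in particular, oriented subdigraphs of $D$), and together they use every arc of $D$ exactly once, so the coloring is a genuine decomposition into two color classes.

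It remains to check that each class is weak locally irregular, and this is where a little (but only a little) care is needed. For the red class: take any red arc $uv$, so $u$ and $v$ are adjacent in $G$ and $\sigma_H(u) \neq \sigma_H(v)$. Since $\sigma_H(x) = d_H^+(x) - d_H^-(x)$, distinct balanced degrees force distinct outdegree--indegree pairs, i.e. $(d_H^+(u), d_H^-(u)) \neq (d_H^+(v), d_H^-(v))$; this is exactly the weak local irregularity condition for the arc $uv$ in the red subdigraph. For the blue class, note that reversing all arcs negates the balanced degree: $\sigma_{\overleftarrow{H}}(x) = -\sigma_H(x)$. Hence for any blue arc between adjacent $u,v$ we still have $\sigma_{\overleftarrow{H}}(u) = -\sigma_H(u) \neq -\sigma_H(v) = \sigma_{\overleftarrow{H}}(v)$, and the same implication gives distinct outdegree--indegree pairs. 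Thus both monochromatic subdigraphs are weak locally irregular, which yields ${\rm lir}(D) \leq 2$. The only potential obstacle is the pair of elementary observations just used---that equal outdegree--indegree pairs would force equal balanced degrees, and that reversal merely negates the balanced degree---so in fact no serious difficulty arises once Theorem~\ref{Borowiecki2} is in hand.
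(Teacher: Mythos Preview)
Your proof is correct and follows essentially the same approach as the paper: apply Theorem~\ref{Borowiecki2} to the skeleton to obtain an orientation with neighbor-distinguishing balanced degrees, and then observe that this orientation together with its reverse decompose the symmetric digraph into two weak locally irregular subdigraphs. The paper's argument is identical up to notation (using $D'$ and $D'' = D \setminus D'$ in place of your $H$ and $\overleftarrow{H}$).
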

\begin{proof}
    Let $G$ be a skeleton of $D$.
    From Theorem~\ref{Borowiecki2} we have that there is an orientation $D'$ of $G$ in which adjacent vertices have different balanced degrees.
    Note that $D'$ is a subdigraph of $D$, and if $d^+(u) - d^-(u) \neq d^+(v) - d^-(v)$ then $d^+(u) \neq d^+(v)$ or $d^-(u) \neq d^-(v)$. 
    Hence, $D'$ is a locally irregular subdigraph of $D$.

    Let $D'' = D \setminus D''$. 
    Observe that $D''$ is also an orientation of $G$, and the outdegree of $v$ in $D''$ equals its indegree in $D'$ and the indegree of $v$ in $D''$ equals its outdegree in $D'$. 
    Hence, in $D''$ adjacent vertices are distinguished using the outdegree-indegree pair.
    Thus, a coloring of $D$ in which all arcs of $D'$ are red and all arc of $D''$ are blue is locally irregular.
\end{proof}

\section{Decomposing digraphs into strongly locally irregular subdigraphs}\label{section_strong}

In this section we investigate the strong variant of locally irregular decompositions and colorings. 
Recall a digraph is strong locally irregular if for every its arc $xy$ balanced degrees of $x$ and $y$ are different, i.e., $\sigma(x) = d^+(x) - d^-(x) \neq d^+(y) - d^-(y) = \sigma(y)$.
A definition of strong locally irregular coloring (or a decomposition) is analogous to the ones defined for other local irregularity concepts: an arc coloring  of a digraph is strong locally irregular if each color class induces a strong locally irregular digraph.
The minimum number of colors in a strong locally irregular coloring of a digraph $D$, a strong locally irregular chromatic index of $D$, is denoted by $\mathrm{slir}(D)$.

Similarly to weak and $(+,+)$-local irregularities, a digraph consisting of a single arc is strong locally irregular.
Hence $\mathrm{slir}(D)$ is well defined for every digraph $D$, and it is trivially upper bounded by $|E(D)|$. 
However, in contrast to weak and $(+,+)$-locally irregular colorings, there is not a straight-forward connection between a locally irregular chromatic index of a simple graph and a strong locally irregular chromatic index of its orientation.
To see this, consider a star $K_{1,3}$: while $K_{1,3}$ is locally irregular itself, an orientation of it, where a single edge $e$ is oriented towards the central vertex $v$ and the remaining two edges are oriented from $v$, contains an arc (the orientation of $e$) whose both ends have their balanced degrees equal to one.
Hence, even proving Conjecture~\ref{graph3} would not imply that (weakly connected) oriented graphs (with an exception of orientations of particular cacti) have strong locally irregular chromatic index bounded by 3. Despite this, we still believe the following conjecture is true:

\begin{cnj}
\label{main1}
Every digraph $D$ satisfies ${\rm slir}(D)\leq 2$.
\end{cnj}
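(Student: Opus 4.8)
The plan is to follow the same program that carried the weak conjecture through Section~\ref{section_weak}: confirm the bound on structured skeleton classes and on symmetric digraphs, using Theorem~\ref{Borowiecki2} as the main external tool, and then try to stitch these ingredients into a general two-coloring. First I would settle the cleanest case, a bipartite skeleton. Coloring every arc from $X$ to $Y$ red and every arc from $Y$ to $X$ blue, exactly as in the proof of Theorem~\ref{dwudzielne}, forces in the red subdigraph $d^-(x)=0$ for $x\in X$ and $d^+(y)=0$ for $y\in Y$; hence along any red arc $xy$ one has $\sigma(x)\ge 1$ while $\sigma(y)\le -1$, so the two balanced degrees are automatically separated by sign. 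The blue class behaves symmetrically, and so ${\rm slir}(D)\le 2$ whenever the skeleton is bipartite.

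Next I would treat symmetric digraphs, which should again give the conjectured bound directly. Let $G$ be the skeleton and use Theorem~\ref{Borowiecki2} to fix an orientation $D'$ of $G$ in which adjacent vertices receive distinct balanced degrees; since $D$ is symmetric, $D'$ is a subdigraph of $D$, and so is its reverse $D''$. The point is that reversing every arc negates every balanced degree, $\sigma_{D''}(v)=-\sigma_{D'}(v)$, so distinctness is preserved: $\sigma_{D'}(u)\ne\sigma_{D'}(v)$ forces $\sigma_{D''}(u)\ne\sigma_{D''}(v)$. Thus both color classes are strong locally irregular and ${\rm slir}(D)\le 2$. The same decomposition idea yields a strong analog of Lemma~\ref{2}: applying Theorem~\ref{Borowiecki2} to the skeleton of the doubled part of an arbitrary $D$ peels off one strong locally irregular orientation as a single color class and leaves an orientation of $G$ behind, giving ${\rm slir}(D)\le 1+\max_{D'}{\rm slir}(D')$ over all orientations $D'$ of $G$.

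The main obstacle is everything between these easy cases and the full conjecture, and it has two layers. First, the layered schemes that carried the weak conjecture through tripartite and bounded-chromatic skeletons (Theorems~\ref{trojdzielne} and~\ref{sześciodzielne}) do not survive the passage to balanced degrees: those schemes controlled only one of $d^+,d^-$ at a time by zeroing it, whereas $\sigma=d^+-d^-$ is a signed quantity and two adjacent vertices of a color class can acquire equal balanced degrees by coincidence — for instance a vertex with two out-arcs and no in-arcs and a vertex with three out-arcs and one in-arc both have $\sigma=2$. Already the tripartite case therefore needs a genuinely new argument rather than a reuse of the Section~\ref{section_weak} scheme. Second, the reduction above is lossy: because an oriented graph need not itself be strong locally irregular (witness the orientation of $K_{1,3}$ discussed before the conjecture), separating the doubled part from the oriented part can only be expected to yield ${\rm slir}\le 3$, not the conjectured $2$.

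To reach the value $2$ one cannot separate symmetric arcs from oriented arcs; one must produce a single two-coloring in which both classes simultaneously absorb doubled and single arcs while keeping all monochromatic balanced degrees locally distinct, and I expect this simultaneous control to be the crux. Balanced degree is a global signed invariant that changes unpredictably when arcs are deleted to form a color class, so the full-orientation guarantee of Theorem~\ref{Borowiecki2} does not transfer to the partial orientations that arise as color classes. The route I would pursue is to prove the conjecture first for oriented graphs, ideally via a structural decomposition of the given orientation into two acyclic pieces on which $\sigma$ can be forced to differ across every arc, in the spirit of the outdegree-decreasing/indegree-increasing split used for orientations of regular graphs, and only afterwards attempt to fold in the doubled arcs without spending a third color.
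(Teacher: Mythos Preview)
The statement is a \emph{conjecture}, and the paper does not prove it; it only records supporting evidence (symmetric digraphs, bipartite skeletons, Eulerian digraphs, orientations of various cacti) and leaves the general bound open. Your proposal is likewise not a proof of the conjecture but a research outline, and you say so explicitly when you call the general case ``the main obstacle'' and sketch a route rather than an argument. So there is no proof to compare; what can be compared is the partial progress.

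On that front your two completed cases coincide with the paper's. Your bipartite argument is exactly Lemma~\ref{lemma_sliec_bipartite}: arcs from $X$ to $Y$ red, arcs from $Y$ to $X$ blue, and the sign of $\sigma$ separates the endpoints in each class. Your symmetric-digraph argument is exactly the paper's: apply Theorem~\ref{Borowiecki2} to the skeleton to get $D'$, set $D''=D\setminus D'$, and use $\sigma_{D''}=-\sigma_{D'}$. Your strong analog of Lemma~\ref{2} is also stated (without proof) in the concluding remarks. Your diagnosis that the tripartite scheme of Theorem~\ref{trojdzielne} does not transfer is correct and is reflected in the paper, which for tripartite skeletons only obtains ${\rm slir}(D)\le 3$, not $2$.

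Where the paper goes further than your outline is in two directions you do not touch: it proves ${\rm slir}(D)\le 2$ for all Eulerian digraphs via a weighted version of the Borowiecki--Grytczuk--Pil\'sniak chip argument (Lemma~\ref{lemma_modification_Borowiecki}), and it carries out a detailed structural/computer-assisted analysis of orientations of cacti (Theorem~\ref{thm_slir_unicyclic} and Lemma~\ref{lemma_counterexample_cactus}). Neither of these, however, closes the conjecture either. In short: your proposal is an honest and accurate assessment of what is easy, what is hard, and why, and it matches the paper's partial results where it overlaps with them; but it is not, and does not claim to be, a proof of Conjecture~\ref{main1}.
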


In the next two subsections we provide results that supports Conjecture~\ref{main1}.

\subsection{Simple classes of digraphs and a general upper bound}

First, we show that Conjecture~\ref{main1} is true for symmetric digraphs, and digraphs with bipartite and tripartite skeletons. Proofs of these results are similar to those proved in the case of weak local irregularity, see Theorem~\ref{dwudzielne}, Theorem~\ref{trojdzielne}, and Theorem~\ref{lir_symmetric}.

\begin{tw}
For every symmetric digraph $D$ we have ${\rm slir}(D)\leq 2$.  
\end{tw}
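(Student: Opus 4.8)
The plan is to mirror the proof of Theorem~\ref{lir_symmetric}, but using balanced degrees as the distinguishing factor instead of outdegree-indegree pairs. Let $G$ be the skeleton of the symmetric digraph $D$. By Theorem~\ref{Borowiecki2}, $G$ admits an orientation $D'$ in which every two adjacent vertices have different balanced degrees; in other words, $D'$ is strong locally irregular by definition. Since $D$ is symmetric, for every edge $uv$ of $G$ both arcs $uv$ and $vu$ belong to $D$, so $D'$ (which keeps exactly one arc per edge of $G$) is a subdigraph of $D$.

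The key step is to examine the complementary subdigraph $D'' = D \setminus D'$, consisting of the arcs of $D$ not used in $D'$. Because $D$ is symmetric, $D''$ is obtained from $D'$ by reversing the orientation of every arc; hence $D''$ is itself an orientation of $G$. Reversing all arcs swaps outdegree and indegree at each vertex, so the balanced degree of each vertex $v$ in $D''$ is exactly $-\sigma_{D'}(v)$, the negative of its balanced degree in $D'$. Consequently, for any arc $uv$ in $D''$ we have $\sigma_{D''}(u) = -\sigma_{D'}(u) \neq -\sigma_{D'}(v) = \sigma_{D''}(v)$, where the inequality follows from the fact that $u$ and $v$ are adjacent in $G$ and $D'$ distinguishes adjacent vertices. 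Thus $D''$ is also strong locally irregular.

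Finally I would color every arc of $D'$ red and every arc of $D''$ blue. Each color class induces a strong locally irregular subdigraph, so this is a valid strong locally irregular coloring of $D$ using two colors, giving ${\rm slir}(D) \leq 2$. I do not anticipate a genuine obstacle here: the only point requiring a moment's care is verifying that negating all balanced degrees preserves the pairwise-distinctness condition along arcs, which is immediate since $x \neq y$ implies $-x \neq -y$. The argument is essentially the balanced-degree analogue of Theorem~\ref{lir_symmetric}, with Theorem~\ref{Borowiecki2} playing the role that the orientation result plays there.
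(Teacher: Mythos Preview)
Your proposal is correct and matches the paper's proof essentially line for line: both invoke Theorem~\ref{Borowiecki2} to obtain a strong locally irregular orientation $D'$ of the skeleton, observe that the complementary subdigraph $D'' = D \setminus D'$ is the reversal of $D'$ (hence has negated balanced degrees and is also strong locally irregular), and conclude that $\{D', D''\}$ is the desired two-color decomposition.
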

\begin{proof}
Let $G$ be a skeleton of $D$. 
From Theorem~\ref{Borowiecki2} we have an orientation $D'$ of $G$ which is strong locally irregular.
$D'$ is a subdigraph of $D$, and by removing all arc of $D'$ from $D$ we are left with a subdigraph of $D$, where $uv \in E(D'')$ if and only if $vu \in E(D')$.
Hence, the balanced degree of a vertex $v$ in $D''$ is the opposite number to the balanced degree of $v$ in $D'$.
Thus, $D'$ and $D''$ form a strong locally irregular decomposition of $D$.
\end{proof}

\begin{lem}\label{lemma_sliec_bipartite}
    If $D$ is a digraph with a bipartite skeleton then $\operatorname{slir}(D) \leq 2$.
\end{lem}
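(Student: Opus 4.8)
The plan is to mimic the coloring used for weak local irregularity in Theorem~\ref{dwudzielne}, but now verify that the \emph{balanced-degree} condition (rather than the outdegree-indegree pair condition) is satisfied across every bipartition edge. Let $X, Y$ be the bipartition classes of the skeleton $G$ of $D$. I would color every arc directed from $X$ to $Y$ red and every arc directed from $Y$ to $X$ blue. The key structural fact this coloring buys me is that in the red subdigraph every vertex of $X$ is a pure source and every vertex of $Y$ is a pure sink, while in the blue subdigraph the roles are reversed: every vertex of $X$ is a pure sink and every vertex of $Y$ is a pure source.

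The main verification is then short. Consider an arc $xy$ of $D$ with, say, $x \in X$ and $y \in Y$; this arc is red. In the red subdigraph $x$ has indegree $0$, so $\sigma_{\text{red}}(x) = d^+_{\text{red}}(x) \ge 0$, and in fact $\sigma_{\text{red}}(x) \ge 1$ since the arc $xy$ contributes to the outdegree of $x$. Meanwhile $y \in Y$ has outdegree $0$ in the red subdigraph, so $\sigma_{\text{red}}(y) = -d^-_{\text{red}}(y) \le -1$ (again because of the arc $xy$). Thus $\sigma_{\text{red}}(x) \ge 1 > -1 \ge \sigma_{\text{red}}(y)$, so the strong local irregularity condition holds on this red arc. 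The symmetric argument handles the blue subdigraph: every arc $yx$ from $Y$ to $X$ gives $\sigma_{\text{blue}}(y) \ge 1$ and $\sigma_{\text{blue}}(x) \le -1$.

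I expect there to be essentially no obstacle here: the only subtlety, compared with the weak case, is that one must confirm the balanced degrees actually differ \emph{numerically} rather than merely differing in the outdegree-indegree pair, and the pure-source/pure-sink structure of each monochromatic class makes the balanced degrees strictly positive on one endpoint and strictly negative on the other, giving a clean strict inequality. Since every arc of $D$ lies in exactly one color class, the coloring is a proper $2$-coloring of the arc set, and both monochromatic subdigraphs are strong locally irregular, establishing $\operatorname{slir}(D) \le 2$.
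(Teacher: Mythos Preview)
Your proposal is correct and follows essentially the same approach as the paper: both use the bipartition $X,Y$, color all arcs from $X$ to $Y$ red and all arcs from $Y$ to $X$ blue, and observe that in each monochromatic subdigraph the non-isolated vertices on one side have strictly positive balanced degree while those on the other side have strictly negative balanced degree. Your write-up is slightly more explicit about the inequalities $\sigma_{\text{red}}(x)\ge 1 > -1 \ge \sigma_{\text{red}}(y)$, but the argument is the same.
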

\begin{proof}
    Let $X,Y$ be a bipartition of vertices of a skeleton of $D$.
    Color each arc from $X$ to $Y$ red, and each arc from $Y$ to $X$ blue.
    In the red subdigraph, except isolated vertices, vertices of $X$ have positive balanced degrees, while the vertices of $Y$ have negative balanced degrees.
    Similarly, in the blue subdigraph, non-isolated vertices of $X$ have negative balanced degrees and non-isolated vertices of $Y$ have positive balanced degrees.
    Thus, the coloring is strong locally irregular.
\end{proof}

\begin{lem}
    If a skeleton of a digraph $D$ is a tripartite graph then ${\rm slir}(D)\leq 3$.
\end{lem}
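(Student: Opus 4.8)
The plan is to generalize the two-colour scheme from Lemma~\ref{lemma_sliec_bipartite}, where each arc was coloured according to the direction in which it crosses the bipartition. Let $X$, $Y$, $Z$ be the three independent sets of a proper tripartition of the skeleton of $D$. I would colour every arc of $D$ by the part containing its \emph{tail}: arcs leaving $X$ red, arcs leaving $Y$ blue, and arcs leaving $Z$ green. This clearly uses only three colours, so it remains to check that each colour class induces a strong locally irregular subdigraph.

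The key observation is that within a single colour class each part acts purely as a source or purely as a sink, and this is forced by the independence of the parts. Concretely, consider the red subdigraph, whose arcs are exactly those with tail in $X$. Since $X$ is independent, no red arc can have its head in $X$; hence every red arc runs from $X$ into $Y \cup Z$. Consequently, a vertex of $X$ that is incident to a red arc has only outgoing red arcs and thus positive balanced degree in the red subdigraph, while a vertex of $Y \cup Z$ incident to a red arc has only incoming red arcs and thus negative balanced degree there. For any red arc $xw$ we therefore get, computing balanced degrees in the red subdigraph, $\sigma(x) \geq 1 > -1 \geq \sigma(w)$, so the two endpoints have distinct balanced degrees. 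The same argument applies verbatim to the blue and green subdigraphs after cyclically permuting the roles of $X$, $Y$, $Z$, which establishes that the coloring is strong locally irregular.

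There is essentially no serious obstacle here; the only points requiring a little care are bookkeeping rather than substance. First, one must note that a vertex isolated in a given colour class imposes no constraint in that class, and that the two endpoints of any present arc are automatically non-isolated in its colour, so the strict inequalities above are legitimate. Second, it is worth recording that this is a genuine extension of the bipartite case: restricting the tail-colouring to two parts recovers exactly the red/blue scheme of Lemma~\ref{lemma_sliec_bipartite}, which is why three colours suffice for three parts, and why — in contrast to the weak tripartite result (Theorem~\ref{trojdzielne}) — two colours cannot be expected. Indeed, the transitive orientation $X \to Y \to Z$ used there leaves the middle part $Y$ incident to arcs in both directions, so its balanced degrees are no longer sign-controlled and the analogous single argument breaks down.
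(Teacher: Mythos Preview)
Your proof is correct and takes essentially the same approach as the paper: colour each arc by the part containing its tail, and observe that in each colour class the tail-part is a pure source (positive balanced degree) while the other two parts are pure sinks (negative balanced degree), so endpoints of any monochromatic arc are distinguished. Apart from a permutation of the colour names, this is exactly the paper's argument.
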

\begin{proof}
    Let $X,Y,Z$ be a tripartition of the skeleton of $D$. 
    Color all arcs outgoing from vertices of $X$ blue, all arcs outgoing from vertices of $Y$ red and all arcs outgoing from vertices of $Z$ green. 
    After this, every vertex from $X$ has a nonnegative blue balanced degree, and nonpositive red and green balanced degrees.
    Every vertex from $Y$ has a nonnegative red balanced degree, and nonpositive blue and green balanced degrees.
    Every vertex from $Z$ has a nonnegative green balanced degree, and nonpositive blue and red balanced degrees. 
    Moreover, only vertices not incident to blue, red or green arcs have their blue, red or green balanced degrees equal to zero. 
    Thus, the coloring is strong locally irregular.
\end{proof}

Using the previous lemma, and Lemma~\ref{3} which states that $k$-chromatic simple graph can be written as an edge-disjoint union of $\lceil \log_p k \rceil$ $p$-partite graphs, we immediately obtain the following upper bound on strong locally irregular index.

\begin{tw}
For every digraph $D$ we have ${\rm slir}(D)\leq 3\lceil log_3k\rceil$, where $k$ is the chromatic number of the skeleton of $D$.  
\end{tw}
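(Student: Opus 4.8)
The plan is to combine Lemma~\ref{3} with the preceding lemma, which guarantees $\mathrm{slir}(D')\leq 3$ whenever the skeleton of $D'$ is tripartite. Let $G$ be the skeleton of $D$, so that $\chi(G)=k$. Applying Lemma~\ref{3} with $p=3$, I would write $G$ as the edge-disjoint union of $l=\lceil\log_3 k\rceil$ tripartite graphs $G_1,\dots,G_l$. This immediately reduces the problem to handling each tripartite piece and then recombining.

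Next I would lift this edge-decomposition of $G$ to an arc-decomposition of $D$. Since every arc of $D$ projects onto a unique edge of $G$, and that edge lies in exactly one $G_i$, I can define $D_i$ to be the subdigraph of $D$ consisting of all arcs whose underlying edge belongs to $G_i$. The arc sets of $D_1,\dots,D_l$ then partition $E(D)$, and the skeleton of each $D_i$ is a subgraph of $G_i$, hence tripartite (a subgraph of a tripartite graph is again tripartite, using the same vertex partition). By the tripartite lemma, each $D_i$ admits a strong locally irregular coloring with at most three colors.

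Finally I would glue these colorings together using $l$ pairwise disjoint palettes of three colors each, giving a total of $3l=3\lceil\log_3 k\rceil$ colors. The point to verify is that the resulting coloring of $D$ is strong locally irregular. Each color class of the combined coloring is precisely a color class of some $D_i$, and strong local irregularity of a monochromatic subdigraph depends only on the arcs it contains, through the balanced degrees $\sigma$ computed \emph{within} that subdigraph. It is therefore unaffected by arcs carrying other colors. Hence every color class induces a strong locally irregular subdigraph of $D$, completing the coloring.

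The substantive content is entirely contained in the two cited lemmas, so I do not expect a genuine obstacle. The only step requiring care is the bookkeeping that converts the edge-decomposition of the skeleton into an arc-decomposition of $D$, together with the two routine observations used implicitly throughout this section: that a subgraph of a tripartite graph is tripartite, and that local irregularity is an intrinsic property of each individual color class, so that disjoint palettes may be combined without interference.
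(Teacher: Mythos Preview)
Your proposal is correct and follows exactly the approach the paper uses: decompose the skeleton into $\lceil\log_3 k\rceil$ tripartite graphs via Lemma~\ref{3} with $p=3$, apply the tripartite lemma to each piece, and combine with disjoint palettes. The paper states this in a single sentence, so your additional bookkeeping on lifting the edge-decomposition to an arc-decomposition and on why disjoint palettes may be combined is more explicit than necessary but entirely sound.
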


One of the classes of digraphs for which Conjecture~\ref{main1} holds is the class of \textit{Eulerian digraphs}, i.e., digraphs in which an Eulerian circuit (a directed trail that traverses all arcs) exists. 
For Eulerian digraphs an equivalent definition exists: Eulerian digraph is a connected digraph (in the class of Eulerian digraphs weak connectivity is equivalent to the strong one) in which $d^+(v) = d^-(v)$ for each vertex $v$.
To prove that ${\rm slir}(D) \leq 2$ for each Eulerian digraph $D$, we first modify the result of Borowiecki, Grytczuk, and Pilśniak~\cite{Borowiecki Grytczuk Pilśniak}.

\begin{lem}[Modification of Theorem 3 in~\cite{Borowiecki Grytczuk Pilśniak}]\label{lemma_modification_Borowiecki}
    Let $k \geq 2$ be an integer.
    For every digraph $D$ there is an assignment $c \colon E(D) \to \{1,k\}$ such that 
    $\varphi \colon V(G) \to \mathbb{Z}$ defined by $\varphi(v) = \sum\limits_{vx \in E(D)}c(vx) - \sum\limits_{yv \in E(D)} c(yv)$ distinguishes neighbors $($i.e., $\varphi(u) \neq \varphi(v)$ whenever $uv \in E(D)$$)$.
\end{lem}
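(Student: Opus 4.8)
The plan is to prove the statement by an extremal (potential-maximization) argument, which isolates the only place where the magnitude of the weights enters and lets a single idea work for every $k \ge 2$. Consider all assignments $c \colon E(D) \to \{1,k\}$; since $D$ has finitely many arcs, this is a finite family, so a maximizer of the potential
\[ \Phi(c) = \sum_{v \in V(D)} \varphi(v)^2 \]
exists, where $\varphi$ is the signed weighted balance defined in the statement. The claim I would aim for is that \emph{any} maximizer of $\Phi$ already distinguishes neighbors, so no explicit construction is needed beyond the extremal choice.

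The engine of the argument is the effect of re-coloring a single arc. Fix a maximizer $c$ and suppose, for contradiction, that some arc $e = uv$ violates the condition, i.e.\ $\varphi(u) = \varphi(v)$. Recolor $e$ to the other value of $\{1,k\}$, changing $c(e)$ by $\delta := \pm (k-1)$; note $\delta \neq 0$ precisely because $k \ge 2$. Since $e$ is outgoing at $u$ and incoming at $v$, this replaces $\varphi(u)$ by $\varphi(u)+\delta$ and $\varphi(v)$ by $\varphi(v)-\delta$, while leaving $\varphi(w)$ unchanged for every $w \notin \{u,v\}$. A direct expansion then gives
\[ \Delta \Phi = (\varphi(u)+\delta)^2 + (\varphi(v)-\delta)^2 - \varphi(u)^2 - \varphi(v)^2 = 2\delta\bigl(\varphi(u)-\varphi(v)\bigr) + 2\delta^2 . \]

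Now I would plug in the assumed conflict $\varphi(u) = \varphi(v)$: the first-order term vanishes and we are left with $\Delta\Phi = 2\delta^2 = 2(k-1)^2 > 0$. Thus the recolored assignment has strictly larger potential, contradicting the maximality of $c$. Hence a maximizer of $\Phi$ satisfies $\varphi(u) \neq \varphi(v)$ for every arc $uv$, which is exactly the desired assignment.

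The step I expect to carry the real content is choosing the right extremal quantity together with the right optimization direction: one must \emph{maximize} $\sum_v \varphi(v)^2$ rather than minimize it, since at a conflict the linear term $2\delta(\varphi(u)-\varphi(v))$ drops out and only the curvature term $2\delta^2$ survives, which pushes the potential upward. Everything weight-dependent is then concentrated in the single quantity $\delta = \pm(k-1)$, and the sole property used is $\delta \neq 0$; this is precisely what allows the $\{1,2\}$ argument of Borowiecki, Grytczuk and Pilśniak to be transplanted to an arbitrary weight set $\{1,k\}$ with $k \ge 2$, yielding the stated modification.
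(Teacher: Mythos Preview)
Your proof is correct but follows a genuinely different route from the paper. The paper adapts the chip-shifting argument of Borowiecki, Grytczuk and Pil\'sniak: it orders the vertices greedily by a potential $q(v)=q_B^-(v)-q_R^+(v)$, and at each step adjusts chip counts on the arcs incident to the chosen vertex so that its final potential strictly exceeds that of all later (hence adjacent) vertices; the resulting chip counts on arcs then give the assignment $c$. Your approach instead takes an extremal $c$ maximizing $\sum_v \varphi(v)^2$ and shows by a single-arc recoloring that any conflict $\varphi(u)=\varphi(v)$ on an arc $uv$ would increase the potential by $2(k-1)^2>0$. Your argument is shorter and isolates exactly where $k\ge 2$ is used, while the paper's argument is constructive and produces an explicit ordering-based algorithm; either proves the lemma.
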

\begin{proof}
    We start with an initial chip configuration: each arc of $D$ is assigned one red and $k$ blue chips.
    Hence, in the beginning, there are no chips on vertices of $D$.
    We provide a simple procedure, during which we add some red chips and remove some blue chips on arcs incident with a vertex, and then shift them from or to this vertex.
    This shifting follows two simple rules:
    \begin{itemize}
        \item[(R1)]  blue chips are always shifted in a direction of the arc they are on, while red chips moves against the direction,
        \item[(R2)] right before shifting the chips from an arc to the incident vertices, we either add red chips or remove blue chips so the numbers of red and blue chips on the arc are equal.
    \end{itemize}
    Moreover, let $q^-_B(v)$ be the number of blue chips on $v$ and the arcs ingoing to $v$, and let $q^+_R(v)$ be the number of red chips on $v$ and on the arcs outgoing from $v$.
    The potential $q(v)$ of $v$ is then defined by $q(v) = q^-_B(v) - q^+_R(v)$.
    Let $D_0 = D$.

    In the $i$-th step of the procedure, for $i$ from 1 to $n$, a single vertex $v_i$ is taken from $D_{i-1}$ such that $q(v_i)$ is the maximum possible.
    We add $k-1$ red chips to every arc ingoing to $v_i$ and remove $k-1$ blue chips to every arc outgoing from $v_i$. 
    After that, chips on arcs incident to $v_i$ are shifted, following the rule (R1), and we set $D_i = D_{i-1} - v_i$. 

    Let $v_iv_j$ be an arc of $D$ for some $i < j$.
    In $D_{i-1}$ we have $q(v_i) \geq q(v_j)$.
    In the $i$-th step, we remove $k-1$ blue chips from $v_iv_j$ and shift the remaining one red chip and one blue chip on this arc according to (R1); after this, the potential of $v_i$ remains unchanged, while the potential of $v_j$ is lowered (instead of adding $k-1$ for blue chips on $v_iv_j$ in $D_{i-1}$, we add only 1).
    Similarly, if $v_jv_i$ is an arc of $D$ for some $i < j$ then right before the $i$-th step we have $q(v_i) \geq q(v_j)$. 
    Then, in the $i$-th step, we add $k-1$ red chips to $v_jv_i$ and apply (R1). 
    This, once again, leaves the potential of $v_i$ unchanged and lowers the potential of $v_j$.
    
    Hence, during the run of the procedure, potentials of individual vertices do not grow and, after the $i$-th step, potentials of vertices from $V(D) \setminus \{v_1, \dots, v_i\}$ adjacent to $v_i$ are strictly smaller then the potential of $v_i$ right before $v_i$ is removed.
    Hence, the neighboring vertices of $D$ are distinguished by a function $q'$ which assigns to every $v_i$ the value $q(v_i)$ obtained right before the end of the $i$-th step.

    To obtain a coloring from $q'$, we simply shift all red chips from vertices back to incident outgoing arcs: for an arc $v_iv_j$, if $i < j$ then $v_i$ sends a red chip to $v_iv_j$, and if $i >j$ then $v_j$ sends $k$ red chips to $v_iv_j$.
\end{proof}

\begin{tw}
    If $D$ is an Eulerian digraph then ${\rm slir}(D) \leq 2$.
\end{tw}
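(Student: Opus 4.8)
The plan is to lean on Lemma~\ref{lemma_modification_Borowiecki} (the chip-based modification of the Borowiecki--Grytczuk--Pilśniak result), which is the only genuinely hard ingredient and has already been established. The guiding observation is that an Eulerian digraph satisfies $d^+(v) = d^-(v)$, and hence $\sigma(v) = 0$ for every vertex $v$; this constraint is exactly what lets a single neighbour-distinguishing weighting collapse into a proper $2$-colouring.

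First I would apply Lemma~\ref{lemma_modification_Borowiecki} with $k = 2$ (any fixed $k \geq 2$ works) to obtain an assignment $c \colon E(D) \to \{1, k\}$ for which $\varphi(v) = \sum_{vx \in E(D)} c(vx) - \sum_{yv \in E(D)} c(yv)$ distinguishes neighbours. I would then read $c$ as a $2$-colouring: colour an arc red if $c = 1$ and blue if $c = k$. Writing $\sigma_R(v)$ and $\sigma_B(v)$ for the balanced degrees of $v$ in the red and the blue subdigraphs, splitting each in- and out-degree by colour gives the identity
\[
\varphi(v) = \sigma_R(v) + k\,\sigma_B(v).
\]

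Next I would exploit the Eulerian hypothesis. Since $\sigma(v) = \sigma_R(v) + \sigma_B(v) = 0$ for every $v$, we have $\sigma_B(v) = -\sigma_R(v)$, and substituting yields $\varphi(v) = (1-k)\,\sigma_R(v)$. Because $k \geq 2$ the factor $1 - k$ is nonzero, so $\varphi(u) \neq \varphi(v)$ is equivalent to $\sigma_R(u) \neq \sigma_R(v)$, and hence also to $\sigma_B(u) \neq \sigma_B(v)$. As $\varphi$ distinguishes neighbours, for every arc $uv$ of $D$ we get both $\sigma_R(u) \neq \sigma_R(v)$ and $\sigma_B(u) \neq \sigma_B(v)$. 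In particular this holds for every red arc and every blue arc, so both monochromatic subdigraphs are strong locally irregular, giving ${\rm slir}(D) \leq 2$.

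The only real obstacle was constructing the weighting in Lemma~\ref{lemma_modification_Borowiecki}; granting it, the work here is the short algebraic identity above, whose entire point is that the Eulerian condition $\sigma \equiv 0$ turns the two unknowns $\sigma_R$ and $\sigma_B$ into one. If one wished to avoid the lemma, an alternative route would decompose the Eulerian digraph into arc-disjoint directed cycles and attempt to two-colour them directly, but controlling balanced degrees at vertices shared by many cycles looks considerably more delicate, so the weighting approach is the cleaner one.
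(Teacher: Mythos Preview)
Your proof is correct and follows essentially the same approach as the paper: apply Lemma~\ref{lemma_modification_Borowiecki}, interpret the two weights as colours, and use the Eulerian identity $\sigma_R+\sigma_B=0$ to conclude that both monochromatic subdigraphs are strong locally irregular. Your algebra is in fact slightly tighter than the paper's: by substituting $\sigma_B=-\sigma_R$ directly into $\varphi=\sigma_R+k\sigma_B$ to obtain $\varphi=(1-k)\sigma_R$, you avoid the paper's extra step of choosing $k>\max\{\Delta^+(D),\Delta^-(D)\}$ (used there only to deduce ``$r$ or $b$ different'' from ``$\varphi$ different'' via a base-$k$ argument), so any $k\geq 2$ suffices for you.
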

\begin{proof}
    Let $k$ be an integer such that $k > \max\{\Delta^+(D), \Delta^-(D)\}$, where $\Delta^+(D)$ and $\Delta^-(D)$ stands for the maximum outdegree and the maximum indegree of a vertex in $D$, respectively.
    For such $k$, consider an assignment $c \colon E(D) \to \{1,k\}$ and the corresponding neighbor-distinguishing mapping $\varphi \colon V(D) \to \mathbb{Z}$ from Lemma~\ref{lemma_modification_Borowiecki}.

    For each vertex $v$ of $D$ we define:
    \begin{align*}
        r^+(v) = |\{vx \in E(D) \colon \varphi(vx) = 1\}|,\\
        r^-(v) = |\{yv \in E(D) \colon \varphi(yv) = 1\}|,\\
        b^+(v) = |\{vx \in E(D) \colon \varphi(vx) = 2\}|,\\
        b^-(v) = |\{yv \in E(D) \colon \varphi(yv) = 2\}|.
    \end{align*}
    Moreover, let $r(v) = r^+(v) - r^-(v)$ and $b(v) = b^+(v) - b^-(v)$.
    Then $\varphi(v) = r(v) + k \cdot b(v)$ for each $v$.
    
    Since $k > \max\{\Delta^+(D), \Delta^-(D)\}$ we have that $k > |r(v)|$ and $k > |b(v)|$.
    Hence, if $\varphi(u) \neq \varphi(v)$, then $r(u) \neq r(v)$ or $b(u) \neq b(v)$.
    We show that since $D$ is Eulerian, $r(u) \neq r(v)$ implies $b(u) \neq b(v)$, and vice versa.

    Suppose $r(u) \neq r(v)$. 
    For every vertex $x$ of $D$ we have $b(x) = b^+(x) - b^-(x) = (d^+(x) - r^+(x)) - (d^-(x) - r^-(x)) = (d^+(x) - d^-(x)) + (r^-(x) - r^+(x))$.
    Since $D$ is Eulerian, $d^+(x) - d^-(x) = 0$ and we get $b(x) = r^-(x) - r^+(x) = - r(x)$. 
    In particular, for vertices $u$ and $v$ we get $b(u) = -r(u)$ and $b(v) = -r(v)$ which, using the fact $r(u) \neq r(v)$, implies $b(u) \neq b(v)$.
    The implication $b(u) \neq b(v) \implies r(u) \neq r(v)$ is analogous.

    Hence, $r(u) \neq r(v)$ and $b(u) \neq b(v)$ for each arc $uv$ in $D$.
    We now define a coloring of arcs of $D$ in the following way: each arc $uv$ is colored red whenever $\varphi(uv) = 1$, and it is colored blue whenever $\varphi(uv) = 2$.
    For such a coloring, the balanced red degree and the balanced blue degree of a vertex $v$ equal $r(v)$ and $b(v)$, respectively.
    From what we proved, both balanced color degrees are different for every two adjacent vertices, yielding that such a coloring is a strong locally irregular coloring.
\end{proof}

\subsection{Orientations of cacti}

When studying a variation of locally irregular colorings and decompositions, a somewhat interesting class of graphs (or digraphs in our case) is the class of cacti (the corresponding class of digraph).
This is due to the special position of cacti among all the simple graphs since every graph that does not have a locally irregular decomposition is a cactus (see for example Observation 3.3 in~\cite{Baudon Bensmail Przybylo Wozniak}, Corollary 26 in~\cite{Sedlar Skrekovski 2}, Theorem 4 in~\cite{Sedlar Skrekovski}).

We start with some easy observations on the orientations of stars.
Suppose that $D$ is an orientation of $K_{1,k}$.
Denote by $v$ the central vertex of $K_{1,k}$, and by $a$ and $b$ the outdegree and the indegree of $v$ in $D$, respectively.
If we color $i$ arcs that are outgoing from $v$ blue, $j$ arcs that are ingoing to $v$ blue, and the all the remaining arcs red, we obtain a coloring in which the blue balanced degree of $v$ equals $i-j$.
For different values of $i \in \{0, \dots, a\}$ and $j \in \{0,\dots, b\}$ we are able to obtain a coloring where the blue balanced degree is any integer from $\{-b, \dots, a\}$.
Note that if the blue balanced degree of $v$ is $x$ then the red balanced degree of $v$ is $a-b - x$.
Hence, the above-listed colorings yield $k+1$ pairs of obtainable blue and red degrees of $v$, where any two pairs differ in their balanced blue and red degrees. 
We get the following:
\begin{obs}\label{obs_oriented_star_1}
    If $D$ is an orientation of $K_{1,k}$ then there are $k+1$ arc colorings of $D$ with two colors that produce element-wise distinct balanced color degree pairs in the central vertex. 
\end{obs}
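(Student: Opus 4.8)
The plan is to analyze an orientation $D$ of the star $K_{1,k}$ by separating the arcs at the central vertex $v$ into the outgoing arcs and the incoming arcs, and then to count the distinct balanced color degree pairs obtainable at $v$ as we vary which arcs receive the blue color. First I would set up the notation exactly as in the paragraph preceding the statement: let $a = d^+(v)$ and $b = d^-(v)$, so that $a + b = k$. A two-coloring of the arcs with colors red and blue is, at the central vertex, completely determined (as far as the balanced color degrees of $v$ are concerned) by how many outgoing arcs are blue and how many incoming arcs are blue; call these numbers $i \in \{0, \dots, a\}$ and $j \in \{0, \dots, b\}$. The blue balanced degree of $v$ is then $\sigma_{\text{blue}}(v) = i - j$, and since the total balanced degree of $v$ is $a - b$, the red balanced degree is $\sigma_{\text{red}}(v) = (a-b) - (i-j)$.

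The key observation is that the pair $(\sigma_{\text{blue}}(v), \sigma_{\text{red}}(v))$ is a function of the single quantity $i - j$: once $i - j$ is fixed, both coordinates of the pair are determined, since the red balanced degree is $(a-b)$ minus the blue one. Hence two colorings produce the same balanced color degree pair at $v$ if and only if they give the same value of $i - j$. The quantity $i - j$ ranges over all integers from $-b$ (take $i = 0$, $j = b$) up to $a$ (take $i = a$, $j = 0$), and every intermediate integer value is attained by a suitable choice of $i$ and $j$. Therefore the number of distinct achievable values of $i - j$ is exactly $a - (-b) + 1 = a + b + 1 = k + 1$. For each such value I would fix one representative coloring, giving $k+1$ colorings whose balanced color degree pairs at $v$ are pairwise distinct, which is exactly the claim.

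The main thing to verify carefully is the claim that distinct values of $i - j$ yield \emph{element-wise} distinct pairs, and not merely distinct pairs; but this is immediate here because the two coordinates are determined by the single value $i - j$, so if two colorings differ in $i - j$ they already differ in the blue coordinate $\sigma_{\text{blue}}(v) = i - j$, and symmetrically in the red coordinate. I do not anticipate a genuine obstacle in this argument: the only point requiring a moment of care is confirming that every integer in the interval $\{-b, \dots, a\}$ is realizable as $i - j$ for admissible $i, j$ (which follows by incrementing $i$ from $0$ to $a$ with $j = 0$, then incrementing $j$ from $0$ to $b$ with $i = 0$), and that the endpoints give exactly $k+1$ values. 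This is essentially a restatement and consolidation of the computation already carried out in the paragraph preceding the observation, so the proof amounts to recording that the map from colorings (via $i-j$) to balanced color degree pairs is injective on a set of $k+1$ representatives.
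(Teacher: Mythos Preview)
Your proposal is correct and follows essentially the same argument as the paper: set $a=d^+(v)$, $b=d^-(v)$, parameterize colorings by the numbers $i,j$ of blue outgoing and incoming arcs, observe that the blue balanced degree of $v$ is $i-j$ and the red one is $(a-b)-(i-j)$, and note that $i-j$ ranges over the $k+1$ integers in $\{-b,\dots,a\}$, giving $k+1$ colorings with pairwise element-wise distinct balanced color degree pairs. The only addition you make is spelling out explicitly why ``element-wise distinct'' follows, which the paper leaves implicit.
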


For the second easy observation, consider an orientation $D$ of a star $K_{1,k}$.
Suppose that $D$ is not a locally irregular digraph.
This means that there is an arc $uv$ such that $\sigma(u) = \sigma(v)$.
However, since $D$ is the orientation of $K_{1,k}$, we get that one of the two vertices $u$ and $v$ is the central vertex, and the other one is pendant in $K_{1,k}$. 
Thus, $\sigma(u) = \sigma(v) = 1$ (if $u$ is pendant and $v$ is central) or $\sigma(u) = \sigma(v) = -1$ (if $u$ is central and $v$ is pendant).
In both cases we get that the outdegree and the indegree of the central vertex of $K_{1,k}$ differ by 1 in $D$.
Considering the fact that a single oriented edge is locally irregular, we get the following:
\begin{obs}\label{obs_oriented_star_2}
    Let $v$ be a central vertex of $K_{1,k}$, and let $D$ be an orientation of $K_{1,k}$.
    If $D$ is not locally irregular then $d^+(v)$ and $d^-(v)$ are positive and $d^+(v) - d^-(v) \in \{-1,1\}$.
\end{obs}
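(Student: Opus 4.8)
The plan is to exploit the rigidity of a star: in any orientation $D$ of $K_{1,k}$ every arc joins the central vertex $v$ to a pendant vertex, so any failure of the strong local irregularity condition must be witnessed by an arc between $v$ and some leaf $u$. First I would record the balanced degree of a leaf. A pendant vertex $u$ has a unique incident edge, so in $D$ either $d^+(u)=1$ and $d^-(u)=0$ (when the arc is $uv$) or $d^+(u)=0$ and $d^-(u)=1$ (when the arc is $vu$); hence $\sigma(u)\in\{-1,1\}$, with $\sigma(u)=1$ precisely when the arc points towards $v$ and $\sigma(u)=-1$ precisely when it points away from $v$.

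Next I would invoke the hypothesis. Since $D$ is not strong locally irregular, there is an arc whose two endpoints share a balanced degree; as every arc of $D$ meets $v$, this arc is either $uv$ or $vu$ for some leaf $u$, and the shared value is $\sigma(u)\in\{-1,1\}$ by the previous step. I would then treat the two orientations symmetrically.

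If the witnessing arc is $uv$, then $\sigma(u)=1$ forces $\sigma(v)=d^+(v)-d^-(v)=1$; moreover the arc $uv$ gives $d^-(v)\ge 1$, whence $d^+(v)=d^-(v)+1\ge 2$, so both $d^+(v)$ and $d^-(v)$ are positive and their difference is $1$. If instead the arc is $vu$, the same computation with the signs reversed yields $\sigma(v)=-1$, $d^+(v)\ge 1$, and $d^-(v)=d^+(v)+1\ge 2$, so again both degrees are positive and the difference is $-1$. In either case $d^+(v)-d^-(v)\in\{-1,1\}$ with both degrees positive, as claimed.

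Since the proof is a short case analysis, I do not expect a genuine obstacle. The only subtlety worth flagging is the boundary behaviour: one should record that a single oriented edge is strong locally irregular on its own (its two ends have balanced degrees $1$ and $-1$, which differ), so the failing arc truly involves the center $v$, and for the small values of $k$ admitting no non-locally-irregular orientation the statement holds vacuously.
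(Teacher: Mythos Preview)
Your proof is correct and follows essentially the same approach as the paper's: both identify a witnessing arc between the center $v$ and a leaf $u$, compute $\sigma(u)\in\{-1,1\}$ from the orientation of that arc, transfer this value to $\sigma(v)$, and then read off positivity of $d^+(v)$ and $d^-(v)$ from the witnessing arc together with $\sigma(v)=\pm 1$. Your write-up is in fact slightly more explicit about the positivity step than the paper's.
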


\begin{lem}\label{only_stars_are_interesting}
    Let $v$ be a pendant vertex of a simple graph $G$.
    Let $D$ be an orientation of $G$, and let $D'$ be an oriented graph obtained from $D$ by attaching some number of pendant arcs outgoing from $v$ and some number of pendant arcs ingoing to $v$. 
    If $\mathrm{slir}(D) \leq 2$ then $\mathrm{slir}(D') \leq 2$.
\end{lem}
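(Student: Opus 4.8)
The plan is to start from a strong locally irregular $2$-coloring of $D$ (which exists since $\mathrm{slir}(D)\le 2$), with colors red and blue, and to extend it to $D'$ by coloring \emph{only} the newly attached pendant arcs, never recoloring an arc of $D$. Since $v$ is pendant in $D$, it is incident to a single arc $e$ of $D$ joining it to a neighbor $w$; assume without loss of generality that $e$ is red. Write $p$ for the number of new arcs outgoing from $v$ and $q$ for the number of new arcs ingoing to $v$ (if $p=q=0$ then $D'=D$ and there is nothing to prove, so assume $p+q\ge 1$). The crucial observation is that the new pendant arcs touch only $v$ and the new leaves, so the balanced color degrees of every vertex of $D$ other than $v$ are unchanged, and the only conditions that can fail in $D'$ are (i) the conditions on the new arcs, each of which compares $\sigma(v)$ with the balanced degree $\pm 1$ of a leaf, and (ii) the condition on the retained red arc $e$, namely $\sigma_{\mathrm{red}}(v)\neq\sigma_{\mathrm{red}}(w)$.

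I would then consider two symmetric candidate colorings of the new arcs: in the first, all new outgoing arcs are red and all new ingoing arcs are blue; in the second, these colors are swapped. A short computation gives, in the first coloring, $\sigma_{\mathrm{red}}(v)=r_0+p$ and $\sigma_{\mathrm{blue}}(v)=-q$, and in the second, $\sigma_{\mathrm{red}}(v)=r_0-q$ and $\sigma_{\mathrm{blue}}(v)=p$, where $r_0=\sigma^D_{\mathrm{red}}(v)\in\{+1,-1\}$ is the contribution of $e$. The point of these particular assignments is that each makes every leaf condition in (i) hold automatically: whenever a red outgoing leaf is present ($p\ge 1$) one has $\sigma_{\mathrm{red}}(v)=r_0+p\ge 0\neq -1$, whenever a blue ingoing leaf is present ($q\ge 1$) one has $\sigma_{\mathrm{blue}}(v)=-q\le -1<1$, and symmetrically for the second coloring. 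Thus in both candidates the new-arc conditions are satisfied regardless of the values of $p$ and $q$.

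It then remains to secure condition (ii). Here I would use that $\sigma_{\mathrm{red}}(w)$ is unchanged from the valid coloring of $D$ (the new arcs do not touch $w$ and $e$ keeps its color), and that in $D$ the arc $e$ already satisfied $\sigma^D_{\mathrm{red}}(v)=r_0\neq\sigma_{\mathrm{red}}(w)$. The two candidate colorings realize the two red balanced degrees $r_0+p$ and $r_0-q$ at $v$, and these are distinct because $p+q\ge 1$; hence at most one of them can coincide with $\sigma_{\mathrm{red}}(w)$, and choosing the other candidate guarantees $\sigma_{\mathrm{red}}(v)\neq\sigma_{\mathrm{red}}(w)$. (In the degenerate cases $p=0$ or $q=0$ the corresponding candidate already has $\sigma_{\mathrm{red}}(v)=r_0\neq\sigma_{\mathrm{red}}(w)$.) Since all conditions not involving $v$ are inherited unchanged from the coloring of $D$, the chosen coloring of $D'$ is strong locally irregular, giving $\mathrm{slir}(D')\le 2$.

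I expect the only genuine obstacle to be exactly condition (ii): attaching pendant arcs at $v$ shifts $\sigma_{\mathrm{red}}(v)$ and could destroy the distinction between $v$ and $w$ that held in $D$. The device that resolves this is to keep the color of $e$ fixed, so that $w$'s balanced degrees, and hence every condition at $w$ with its other neighbors, remain untouched, and to have two candidate assignments whose red balanced degrees at $v$ differ; this collapses the entire argument into a one-line pigeonhole step. A tempting alternative, namely recoloring the whole star at $v$ via Observation~\ref{obs_oriented_star_1}, would change the color of $e$ and thereby perturb $w$'s balanced degrees, possibly cascading to $w$'s other incident arcs, so I would deliberately avoid it.
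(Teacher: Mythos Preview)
Your proof is correct and follows the same overall strategy as the paper: keep the coloring of $D$ (and in particular the color of the arc $e=vw$) fixed, propose two extensions to the pendant arcs that give $v$ two different balanced degrees in the color of $e$, verify that each extension satisfies all the leaf conditions, and then use pigeonhole on the single remaining constraint $\sigma_{\text{red}}(v)\neq\sigma_{\text{red}}(w)$.

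The difference lies in the choice of the two candidate extensions. The paper first disposes of the case where the attached star $S$ is itself strong locally irregular (coloring all of $S$ with the color opposite to $e$), and only in the remaining case invokes Observation~\ref{obs_oriented_star_2} to get $|d^+_S(v)-d^-_S(v)|=1$ with both degrees positive; its two candidates $\varphi_1,\varphi_2$ are tailored to that numerical situation. Your two candidates (all outgoing arcs one color, all ingoing arcs the other, and the swap) work uniformly for every $p,q\ge 0$ with $p+q\ge 1$, so you avoid the case split entirely and never need to appeal to Observation~\ref{obs_oriented_star_2}. This is a genuine simplification: the leaf checks reduce to the inequalities $r_0+p\ge 0$, $-q\le -1$, $p\ge 1$, $r_0-q\le 0$, each of which holds exactly when the corresponding type of leaf is present, and the two red balanced degrees $r_0+p$ and $r_0-q$ at $v$ are distinct because $p+q\ge 1$. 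The paper's route has the minor advantage that its ``$S$ is strong locally irregular'' branch is reused verbatim in later arguments (e.g.\ Theorem~\ref{thm_slir_unicyclic}), but your argument is shorter and more transparent for this lemma.
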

\begin{proof}
    Denote by $w$ the only neighbor of $v$ in $D$.
    Let $S$ be an oriented star that consists of all arcs added to $D$ to create $D'$.
    Consider a red-blue strongly locally irregular coloring $\varphi$ of $D$ in which the arc between $v$ and $w$ is blue.
    We show how that there is a coloring of $S$ which together with $\varphi$ results in a strongly locally irregular coloring of $D'$.
    
    Denote by $a$ and $b$ the outdegree and the indegree of $v$ in $S$, respectively.
    Both $a$ and $b$ are positive, and their difference is $1$ or $-1$, see Observation~\ref{obs_oriented_star_1}.
    Suppose that $a - b = 1$ (the second case is similar). 
    We have $a \geq 2$.

    If $S$ is strongly locally irregular, color each its arc red. It is easy to see that the obtained coloring of $D'$ is strongly locally irregular.

    Suppose therefore that $S$ is not strongly locally irregular.
    We consider two different colorings $\varphi_1$ and $\varphi_2$ of $S$: in $\varphi_1$ we color a single arc $vu$ (an arc outgoing from $v$) blue, and the remaining arcs of $S$ red, and in $\varphi_2$ we color all arcs outgoing from $v$ blue, and all arc ingoing to $v$ red.
    In both cases, the red subdigraph of $S$ is locally irregular.
    Observe also that, in both cases, there is no conflict between $v$ and its outneighbor (take $u$ as a representative) in $V(S)$.
    When combining $\varphi$ with $\varphi_1$, the blue balanced degree of $u$ is $-1$, and the blue balanced degree of $v$ is $0$ or $2$ (as there are two blue arcs incident to $v$ and at least one of them is outgoing from $v$).
    When combining $\varphi$ with $\varphi_2$, the blue balanced degree of $u$ is $-1$, and the blue balanced degree of $v$ is at least $1$ (there are at least $a \geq 2$ blue arcs outgoing from $v$ and at most one blue arc ingoing to $v$).

    Hence, if the strongly locally irregular condition is violated in the coloring obtained by combining $\varphi$ with $\varphi_1$ or $\varphi_2$, there is a conflict between $v$ and $w$.
    However, $\varphi_1$ and $\varphi_2$ change the blue balanced degree of $v$ in two different ways ($\varphi_1$ raises it by 1, and $\varphi_2$ raises it by at least 2).
    Thus, at least one of the considered colorings of $S$ does not create a conflict between $v$ and $w$ in $D'$; we can extend $\varphi$ to a strongly locally irregular coloring of $D'$ without introducing new colors.
\end{proof}

In a cactus $G$, after removing all edges that lie on cycles, we are left with a graph $H$ (subgraph of $G$ induced on bridges of $G$), whose components are trees. 
Each of these trees has at least one vertex which lies on a cycle in $G$; if such a tree $T$ has exactly one vertex which lies on a cycle in $G$, and it has at least two vertices in total, we say that $T$ is a \textit{pendant tree} of $G$.
Alternatively, pendant trees can be seen as trees attached to a single vertex of a cycle in $G$.

In the proofs of the following results on strong locally irregular colorings of oriented cacti, the main strategy is to suppose to the contrary that the claims are not true, and to consider the minimal counterexample, i.e., the smallest cactus $G$ which has an orientation $D$ with $\mathrm{slir}(D) \geq 3$.
Consider a pendant tree $T$ of  $G$ attached to a vertex $v$: if $T$ is not a star, then we may remove all its edges except those which are incident $v$. From the assumption that $G$ is the smallest counterexample we get that any orientation of the new graph has a strong locally irregular coloring with at most 2 colors.
Such a coloring is then extended to a coloring of $D$ using Lemma~\ref{only_stars_are_interesting}, as we can, level by level, attach a star to a pendant vertex, and always find a suitable coloring. 
Hence, we get the following observation:
\begin{obs}\label{obs_min_bad_cactus}
    If $G$ is the smallest cactus  which has an orientation $D$ with $\mathrm{slir}(D) \geq 3$ then every pendant tree of $G$ is a star.
\end{obs}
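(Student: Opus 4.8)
The plan is to argue by contradiction: assume $G$ is the smallest cactus admitting an orientation $D$ with $\mathrm{slir}(D) \geq 3$, and suppose that some pendant tree $T$ of $G$ (attached at a vertex $v$ and containing at least two vertices) is not a star. The goal is to build a strictly smaller cactus, derive a $2$-coloring of its orientation from minimality, and then reconstruct a $2$-coloring of $D$, contradicting the choice of $G$.

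First I would make precise how to shrink $G$. Since $T$ is a pendant tree that is not a star, it has vertices at distance $\geq 2$ from $v$; in particular $T$ has leaves $\ell$ whose neighbor is not $v$. The idea is to strip $T$ down to the star centered at $v$ consisting only of the edges of $T$ incident to $v$, producing a smaller cactus $G^\star$ (the cycles of $G$ and all other pendant trees are untouched). Let $D^\star$ be the orientation of $G^\star$ inherited from $D$ (keep the orientation of every surviving arc). Because $G^\star$ has strictly fewer edges than $G$ and is still a cactus, minimality gives $\mathrm{slir}(D^\star) \leq 2$.

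Next I would rebuild $D$ from $D^\star$ by re-attaching the removed part of $T$ one star at a time, invoking Lemma~\ref{only_stars_are_interesting}. The key structural fact is that a tree can be grown back from a subtree by repeatedly attaching, at an existing pendant vertex, a collection of new pendant arcs (some outgoing, some ingoing). Process the vertices of $T$ in order of decreasing distance from $v$: at each stage the current partial orientation is obtained from the previous one exactly by attaching pendant arcs (in both directions, matching the orientation in $D$) at a vertex that is pendant in the current graph. Lemma~\ref{only_stars_are_interesting} asserts precisely that such an attachment preserves the property $\mathrm{slir} \leq 2$. Applying it level by level, starting from the coloring of $D^\star$ with at most two colors, yields a strong locally irregular $2$-coloring of $D$ itself, contradicting $\mathrm{slir}(D) \geq 3$. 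Hence every pendant tree of the minimal counterexample must be a star.

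The main obstacle is verifying that the reconstruction genuinely fits the hypotheses of Lemma~\ref{only_stars_are_interesting} at every step. The lemma attaches pendant arcs at a \emph{pendant} vertex of the current oriented graph, so I must order the re-attachments so that the vertex being expanded is a leaf of the graph built so far: this works if we grow $T$ from the parts farthest from $v$ inward toward $v$, treating each internal vertex of $T$ as the center of a star of pendant arcs once all of its deeper descendants have already been attached. One must check that, under this ordering, each newly processed vertex is indeed pendant at the moment of attachment and that the arcs being added are exactly the pendant arcs demanded by the lemma; this is a bookkeeping argument on the tree structure rather than a substantive difficulty, but it is where care is needed.
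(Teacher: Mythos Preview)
Your overall strategy coincides with the paper's: trim the non-star pendant tree $T$ down to the star of edges at $v$, invoke minimality on the smaller cactus, then rebuild $T$ level by level via Lemma~\ref{only_stars_are_interesting}. The one genuine slip is the direction of the rebuilding. You propose to ``grow $T$ from the parts farthest from $v$ inward toward $v$'' and to process vertices in order of \emph{decreasing} distance from $v$, attaching at an internal vertex ``once all of its deeper descendants have already been attached''. That cannot work: if the deeper descendants of $u$ are already present, then $u$ is no longer pendant (it already sees both its parent and its children), so the hypothesis of Lemma~\ref{only_stars_are_interesting} fails; and if you start with the farthest vertices, their parents are not even in the current graph. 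The correct order is \emph{increasing} distance from $v$ (BFS outward from the star at $v$): a vertex $u$ at distance $d$ is pendant in the current digraph exactly while its children at distance $d+1$ are still absent, and it is at that moment that Lemma~\ref{only_stars_are_interesting} lets you attach them. With this reversal of order, your argument is precisely the paper's.
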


Now we show that Conjecture~\ref{main1} is true for orientations of unicyclic graphs.

\begin{tw}\label{thm_slir_unicyclic}
    Let $D$ be an orientation of a unicyclic graph $G$. Then $\mathrm{slir}(D) \leq 2$.    
\end{tw}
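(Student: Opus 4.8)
The plan is to argue by contradiction through a smallest counterexample and to reduce it to a cycle carrying only pendant stars. Suppose the claim fails and let $D$ be an orientation of a unicyclic graph $G$ with $\mathrm{slir}(D)\geq 3$ and $|V(G)|$ as small as possible. By Observation~\ref{obs_min_bad_cactus} every pendant tree of $G$ is a star. If some attached star were centered at a vertex $c$ not lying on the unique cycle $C$ of $G$, then deleting its leaves would leave $c$ pendant and give a smaller orientation, which has a strong locally irregular $2$-coloring by minimality and extends back over the deleted pendant arcs by Lemma~\ref{only_stars_are_interesting}, contradicting $\mathrm{slir}(D)\geq 3$. Hence every attached tree is a star centered at a vertex of $C$, so I may take $G$ to be the cycle $C=v_1v_2\cdots v_nv_1$ together with, at each $v_i$, a star $S_i$ consisting of $a_i$ outgoing and $b_i$ incoming pendant arcs (so $|E(S_i)|=a_i+b_i$).

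Next I would color the arcs of $C$ alone. Coloring them alternately red and blue makes each color class a matching when $n$ is even; the two ends of every monochromatic matching arc then carry balanced color degrees $+1$ and $-1$, so both classes are strongly locally irregular and every cycle vertex has red and blue balanced degree in $\{-1,+1\}$. When $n$ is odd the same alternation leaves a single vertex incident to two arcs of one color; this vertex then has an even balanced color degree while its neighbors have odd ones, so the coloring of $C$ stays strongly locally irregular, again with small and explicitly known balanced color degrees. Crucially this works for an \emph{arbitrary} orientation of $C$, since the two ends of a monochromatic matching arc always differ by exactly $2$.

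It then remains to extend the coloring over the stars. Adding the arcs of $S_i$ only shifts the two balanced color degrees of $v_i$, while every leaf of $S_i$ receives balanced degree $\pm1$ in the color of its unique incident arc. So to keep both classes strongly locally irregular I need $\sigma_{\mathrm{red}}(v_i)$ and $\sigma_{\mathrm{blue}}(v_i)$ to avoid a short list of forbidden values: $-1$ and/or $+1$ coming from the leaves (according to the colors and orientations of the star arcs), together with one value per color coming from each of the at most two neighbors of $v_i$ on $C$. I would exploit the flexibility of Observation~\ref{obs_oriented_star_1}, which supplies $a_i+b_i+1$ colorings of $S_i$ inducing pairwise distinct balanced color degree pairs at $v_i$, and select one whose induced pair avoids all forbidden pairs.

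The main obstacle is that, once the cycle coloring is fixed, $\sigma_{\mathrm{red}}(v_i)$ and $\sigma_{\mathrm{blue}}(v_i)$ can no longer be tuned independently, so separating $v_i$ from its leaves may push one of them onto the value of a cycle neighbor. The tight case is a star with $a_i-b_i\in\{-1,1\}$, which by Observation~\ref{obs_oriented_star_2} is exactly one that is not strongly locally irregular on its own and hence must genuinely use both colors; there the number of admissible colorings among the $a_i+b_i+1$ candidates can drop, and it may be necessary to re-choose the colors of the two cycle arcs at $v_i$ (or, when $n$ is odd, the position of the single defect) to regain enough freedom. Verifying that these local adjustments can always be carried out consistently around the whole cycle, so that no conflict survives on any pendant arc or on either incident cycle arc in both color classes at once, is the crux of the argument and yields the contradiction with $\mathrm{slir}(D)\geq 3$.
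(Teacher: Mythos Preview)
Your reduction to a cycle with pendant stars centered at cycle vertices is fine, and so is the alternating coloring of $C$ itself. The genuine gap is exactly the part you label ``the crux'': you never carry out the extension over the stars, and the obstacle you identify is real. Once the cycle coloring is fixed, the forbidden values for $(\sigma_{\mathrm{red}}(v_i),\sigma_{\mathrm{blue}}(v_i))$ coming from the neighbors $v_{i-1},v_{i+1}$ on $C$ depend on how \emph{their} stars are colored, so the choices at $v_1,\dots,v_n$ are coupled cyclically. Observation~\ref{obs_oriented_star_1} gives you $a_i+b_i+1$ options at each $v_i$, but in the tight case $|a_i-b_i|=1$ several of those are already killed by the leaf constraints, and you have not shown that what remains can be chosen consistently around the whole cycle. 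Saying one may ``re-choose the colors of the two cycle arcs at $v_i$'' only shifts the problem elsewhere on $C$; a proof would need an actual global argument (an ordering, a potential, or a parity trick) that terminates, and none is given.

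The paper avoids this global consistency issue altogether. It first observes that $C$ must be odd (otherwise $G$ is bipartite and Lemma~\ref{lemma_sliec_bipartite} applies), so there exist three consecutive cycle vertices $u,v,w$ with $uv,vw\in E(D)$. It then \emph{splits} the cycle at $v$: delete $v$ together with its pendant star, and hang two new pendant vertices off $u$ and $w$ to obtain an oriented tree $D'$. Coloring $D'$ by the bipartite rule of Lemma~\ref{lemma_sliec_bipartite} makes every blue balanced degree positive on one side of the bipartition and negative on the other; in particular the arcs at $u$ and $w$ that replace $uv$ and $vw$ are both blue, and the blue balanced degrees of $u,v,w$ in the induced coloring of $D-\,(\text{star at }v)$ are positive, zero, and negative respectively. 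Thus \emph{all} stars except the one at $v$ are already taken care of by the tree coloring, and only the single star at $v$ needs to be extended, with just one relevant neighbor constraint (against $u$) to dodge. Two explicit colorings of that star (one outgoing arc blue versus all outgoing arcs blue) give two different positive blue balanced degrees at $v$, so at least one misses $u$. This is what collapses your cyclic dependency to a single local check.
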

\begin{proof}
    Suppose to the contrary that the theorem is false.
    Let $G$ be the smallest unicyclic graph which has an orientation $D$ with $\mathrm{slir}(D) \geq 3$.
    Hence, $G$ is connected and from the Observation~\ref{obs_min_bad_cactus} we have that every pendant tree of $G$ is a star.
    Denote by $C$ the only cycle in $G$.
    The length of $C$ is odd, otherwise $G$ is bipartite and Lemma~\ref{lemma_sliec_bipartite} yields a contradiction.
    
    Since $C$ is odd, there are three consecutive vertices $u,v,w$ on $C$ such that $uv,vw \in E(D)$.
    Denote by $T$ the pendant tree attached to $v$ (if the degree of $v$ in $G$ is 2, let $T$ be a graph with a single vertex $v$).
    Consider now a digraph $D'$ obtained from $D$ by removing all vertices of $T$ (including $v$), adding two new vertices $v_1$ and $v_2$, and adding arcs $uv_1$ and $v_2w$.
    Skeleton of $D'$ is a tree, hence, we can color its arcs in the same way that is described in the proof of Lemma~\ref{lemma_sliec_bipartite}: Let $X,Y$ be the bipartition of vertices of the skeleton of $D'$. Color each arc from $X$ to $Y$ blue, and each arc from $Y$ to $X$ red. Such a coloring of $D'$ is strong locally irregular.
    
    Suppose, without loss of generality, that $u \in X$.
    Then $w \in Y$ and thus, both arcs  $uv_1$ and $v_2w$ are blue.
    Moreover, blue balanced degrees of $u$ and $w$ are positive and negative, respectively.
    Such a coloring of $D$ naturally induces a strong locally irregular coloring of $D''$ -- a digraph obtained from $D$ by removing all vertices of $T$ except $v$.
    In the coloring of $D''$, arcs $uv_1$ and $v_2w$ are blue, and blue balanced degrees of vertices $u$, $v$, and $w$ are positive, equal to zero, and negative, respectively.
    
    If the degree of $v$ in $G$ is 2, we have a strong locally irregular coloring of $G$ with 2 colors, which contradicts $\mathrm{slir}(D) \geq 3$.
    Hence, suppose that the degree of $v$ in $G$ is at least 3, i.e., $T$ has at least two vertices.
    We proceed to the coloring of arcs that correspond to edges of the star $T$; we call these arcs pendant.
    
    Recall that both arcs incident to $v$ which are not pendant are blue.
    Hence, if the orientation $S$ of $T$ is strong locally irregular, we may color all its arcs red, and obtain a strong locally irregular coloring of $D$ with two colors.
    Therefore, suppose that $S$ is not strong locally irregular.
    From Observation~\ref{obs_oriented_star_2} we have that there is at least one arc outgoing from $v$ and at least one arc ingoing to $v$ in $S$, and $d^+(v) - d^-(v) \in \{-1,1\}$ in $S$.
    
    Suppose that in $S$ we have $d^+(v) - d^-(v) = 1$ (the other case is similar).
    Consider two different colorings $\varphi_1$ and $\varphi_2$ of $S$. 
    In $\varphi_1$ a single arc of $S$ that is outgoing from $v$ is colored blue and the remaining arcs of $S$ are red.
    In $\varphi_2$ all arcs outgoing from $v$ are blue, and all arcs ingoing to $v$ are red.
    
    Observe that in both colorings $\varphi_1$ and $\varphi_2$ of $S$, the red subdigraph is locally irregular.
    Moreover, after we combine these colorings of $S$ with the coloring of $D''$ (in which $v$ has blue balanced degree equal to zero), we get a coloring in which the blue balanced degree of $v$ is positive.
    Hence, if using $\varphi_1$ or $\varphi_2$  on arcs of $S$ in $D$ results in a violation of the strong local irregularity condition, the conflict is between blue balanced degrees of $v$ and $u$.
    However, the blue balanced degree of $u$ is the same as in the coloring of $D''$, and colorings $\varphi_1$ and $\varphi_2$ change the blue balanced degree of $v$ from 0 in the coloring of $D''$ to two different positive integers in the coloring of $D$.
    Thus, at least one of the colorings $\varphi_1$ and $\varphi_2$ can be applied to arcs of $S$ to extend the strong locally irregular coloring of $D''$ to a strong locally irregular coloring of $D$ which uses two colors, a contradiction.
\end{proof}

The previous theorem implies that, if a counterexample to Conjecture~\ref{main1} among orientations of cacti exists, then the cactus contains at least two cycles.
We further restrict how such a minimum counterexample could look like. 
A cycle of a cactus is called pendant if the removal of its edges yields a graph in which at most one component has cycles.
Note that every cactus contains a pendant cycle unless it is a tree (this can be easily seen: repeatedly remove the pendant edges of a cactus, and at the point where there are no pendant edges, pendant cycles correspond to leaves of the block-cut tree).

\begin{lem}\label{lemma_counterexample_cactus}
    Let $G$ be the minimum cactus $($concerning the number of vertices and edges$)$ such that there is an orientation $D$ of $G$ with $\operatorname{slir}(D) \geq 3$. 
    Then:
    \begin{itemize}
        \item[(i)] Every pendant cycle of $G$ has length $3$.
        \item[(ii)] If $G$ contains a pendant cycle $C$ of length $3$ such that $G - V(C)$ is a graph with at most one component containing cycles, then there is a vertex of $C$ that lies on multiple cycles in $G$.
    \end{itemize}
\end{lem}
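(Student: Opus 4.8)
The plan is to argue by contradiction on the minimal counterexample $G$, in the same spirit as the proof of Theorem~\ref{thm_slir_unicyclic}. I will use throughout that $G$ is connected, that every pendant tree of $G$ is a star (Observation~\ref{obs_min_bad_cactus}), that $G$ is not bipartite (otherwise Lemma~\ref{lemma_sliec_bipartite} gives $\operatorname{slir}(D)\le 2$), and that $G$ contains at least two cycles (Theorem~\ref{thm_slir_unicyclic}). The governing principle in both parts is the same: removing an edge, an internal vertex, or a bridge incident to a pendant cycle yields a strictly smaller cactus, whose orientation admits a strong locally irregular $2$-colouring by the minimality of $G$; the whole difficulty is then to lift such a colouring back to $D$ across the re-inserted arc(s) without spoiling the balanced colour degrees of the vertices whose degrees are perturbed.

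\textbf{Part (i).} Suppose a pendant cycle $C = p\,v_1 v_2 \cdots v_{\ell-1}\,p$ has length $\ell \ge 4$. Here $p$ is the unique vertex of $C$ through which the cyclic part of $G$ is reached: a second such vertex would yield, together with a sub-path of $C$, a cycle sharing edges with $C$, contradicting the cactus property. By Observation~\ref{obs_min_bad_cactus}, the vertices $v_1, \dots, v_{\ell-1}$ carry only pendant stars. Since $\ell \ge 4$, there is an internal edge $e = v_i v_{i+1}$ of $C$ with $v_i, v_{i+1} \neq p$. I would delete $e$ to obtain the smaller cactus $G' = G - e$ (one cycle fewer), take a $2$-colouring of the orientation $D - e$ by minimality, and reinsert the arc $e$ with its orientation from $D$. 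Reinserting $e$ changes, in one colour only, the balanced colour degree of each of $v_i$ and $v_{i+1}$ by $\pm 1$, so the task is to choose the colour of $e$ so that neither endpoint collides with a neighbour and so that $v_i$ and $v_{i+1}$ are separated. Whenever $v_i$ or $v_{i+1}$ carries a pendant star, I would absorb this perturbation using the $k+1$ interchangeable star-colourings of Observation~\ref{obs_oriented_star_1}. The delicate case is a \emph{bare} pendant cycle, in which all internal vertices have degree two and no such slack is available; there I would instead delete all internal vertices of $C$ at once, $2$-colour the resulting smaller cactus (which still contains $p$) by minimality, and recolour the $\ell$ arcs of $C$ from scratch, distinguishing the internal vertices along the path and taking care only at $p$, whose at most two incident $C$-arcs must be coloured so as to keep $p$ separated both from $v_1, v_{\ell-1}$ and from its neighbours outside $C$.

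\textbf{Part (ii).} Assume $C$ is a pendant triangle with $G - V(C)$ having at most one cyclic component, and suppose for contradiction that no vertex of $C$ lies on a second cycle. As above, the cactus property forces the cyclic part of $G$ to meet $C$ in a single vertex $a$, and since $a$ is on no other cycle this meeting is through a bridge $ax$; the remaining two vertices of $C$ carry only pendant stars. Cutting $ax$ splits $G$ into two strictly smaller cacti, the triangle side and the cyclic side, each admitting a strong locally irregular $2$-colouring by minimality. I would recombine these colourings and colour the single arc $ax$; since this arc alters only the balanced colour degrees of $a$ and $x$ (each by $\pm 1$ in one colour), the two colour choices for $ax$, together with the star flexibility at $a$ (Observation~\ref{obs_oriented_star_1}) and the one-arc correction technique underlying Lemma~\ref{only_stars_are_interesting}, suffice to remove the only possible new conflicts, namely $a$ versus $x$ and $a$ versus its triangle-neighbours. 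This produces a $2$-colouring of $D$, contradicting $\operatorname{slir}(D) \ge 3$.

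In both parts the reduction to a smaller cactus and the appeal to minimality are routine; the main obstacle is the lifting step, that is, re-colouring across the re-inserted arc(s) while preserving all balanced-colour-degree distinctions at the perturbed vertices. The entire slack for this comes from the interchangeable pendant-star colourings of Observation~\ref{obs_oriented_star_1} and from the two available colours of each re-inserted arc, so the hardest configurations are precisely the bare ones, where the only remaining freedom lies in the colouring of the cycle itself and must be balanced against the constraints inherited at the attachment vertex $p$ (respectively $a$).
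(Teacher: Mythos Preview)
What you have written is a strategy sketch, not a proof: at every decisive point you write ``I would'' and ``the task is to'' and then assert, without verification, that the available freedom suffices. But the lifting step you describe as ``the main obstacle'' is in fact the entire content of the lemma, and the paper's own proof shows it is far from routine. In part~(i) the paper does not simply delete a cycle edge and recolour; it first \emph{bounds the degree} of the non-anchor cycle vertices (a counting argument via Observation~\ref{obs_oriented_star_1} gives $d(v_1)\le 7$, and then a computer check pushes this to $d(v_1)\le 4$), and only then, with finitely many configurations for $(d(v_1),d(v_2))$ and the colours and orientations near $v_0,\dots,v_3$, performs a computer-assisted case analysis (Claims~2--5) to show a $2$-colouring of the reduced digraph always extends. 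Your one-edge-deletion has only two colour choices for the reinserted arc against up to five possible conflicts (two cycle neighbours, the new arc itself, and pendant arcs on each side), and ``absorbing the perturbation with the star colourings'' is exactly what has to be proved, not asserted.

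A concrete gap in part~(ii): you assume you can always invoke ``star flexibility at $a$'', but $a$ need not carry any pendant arcs at all (take $d_G(a)=3$: two triangle edges and the bridge $ax$). In that situation your only freedom is the two colours of the single arc $ax$, and these must simultaneously avoid a conflict of $a$ with $x$, of $a$ with each of the two triangle vertices, and of $x$ with its neighbours on the other side; two choices cannot be shown to handle this without further argument. The paper sidesteps this by \emph{not} cutting the bridge: it removes the whole triangle together with all pendant neighbours of $a$, bounds $d(a)\le 9$ by a counting argument analogous to the one for $v_1$, and then runs a finite computer check over all orientations and colour profiles near $a$. Your outline never produces the degree bounds that make the problem finite, and it never carries out the finite check; both are essential.
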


\begin{proof}
    First, observe that $G$ has at least two cycles (see Theorem~\ref{thm_slir_unicyclic}), and all pendant trees of $G$ are stars (Observation~\ref{obs_min_bad_cactus}). 

    Let $C$ be a pendant cycle of the maximum length in $G$.
    Let $x$ be a vertex of $C$ such that the remaining vertices of $C$ lie in acyclic components of $G - E(C)$.
    Let $v_1 \in V(C) \setminus \{x\}$ has the maximum degree.
    Suppose that $d(v_1) = k+2$, i.e., $v_1$ is incident to $k$ pendant edges in $G$.
    
    By $S$ we denote the star digraph induced on $k$ pendant arcs incident to $v_1$ in $D$.
    Let $D_1$ be a digraph obtained from $D$ by removing arcs of $S$ and all the newly obtained isolated vertices (pendant neighbors of $v_1$). 
    From the assumption, there is a strong locally irregular coloring $\varphi_1$ of $D_1$ with at most 2 colors.
    Let $b_1$ and $r_1$ be balanced blue and red degrees of $v_1$ in $\varphi_1$.
    In what follows we discuss how the arcs of $S$ could be colored in such a way that this coloring would extend $\varphi_1$ into a strong locally irregular coloring with at most two colors of $D$, considering that $k$ is large enough. 
    This yields an upper bound on $k$. 

    We claim that $k \leq 5$. 
    Suppose this is not the case, i.e., at least 6 pendant edges are attached to $v_1$ in $G$.
    Note that assigning colors to $k$ pendant arcs incident to $v_1$ changes the color degrees of $v_1$ to $b_1+b'$ and $r_1 + r'$, where $b'$ and $r'$ are balanced blue and red degrees of $v_1$ in the coloring of $S$.
    For each pair of the final balanced blue and red degrees $b_1 + b'$ and $r_1 + r'$ in $v_1$, there are six possible conflicts with the strong locally irregular condition: two on the arcs of $C$, one for blue pendant arcs outgoing from $v_1$ (if $b_1+b'=-1$), one for red pendant arcs outgoing from $v_1$ (if $r_1 + r' = -1$), one for blue pendant arcs ingoing to $v_1$ (if $b_1 + b' = 1$), and one for red pendant arcs ingoing to $v_1$ (if $r_1 + r' = 1)$.
    
    If $k \geq 6$, Observation~\ref{obs_oriented_star_1} yields that there are at least 7 different colorings of $S$, each changing both blue and red balanced degrees of $v_1$ differently to others.
    Hence, at least one of them changes the blue and red balanced degrees of $v_1$ in such a way, that none of the six possible conflicts occur.
    Such a coloring, together with $\varphi_1$, results in a strong locally irregular coloring of $D$ with 2 colors, contradicting $\mathrm{slir}(D) \geq 3$. 
    Hence, $k \leq 5$.

    It is possible to further lower the bound on $d(v_1)$ by considering the possibilities of how the edges incident to $v_1$ on $C$ are oriented in $D$ and colored in $\varphi_1$, what the red and blue balanced degrees of neighbors $v_0$ and $v_2$ of $v_1$ on $C$ are, and what the orientations of arcs on $S$ are. 
    If for example the arc between $v_0$ and $v_1$ is blue, the value of the red balanced degree of $v_0$ is unimportant. 
    Moreover, if the blue balanced degree of $v_0$ is greater than 8 or smaller than $-8$, 
    no coloring of $S$ changes the blue balanced degree of $v_1$ in a way that would violate the strong locally irregular condition on the arc between $v_0$ and $v_1$ (since $d(v_1) \leq 7$).
    This leaves us with a finite number of cases that need to be considered; using a simple Python program we generate these cases, and for each of them we find a suitable coloring of $S$ (see 'S2 Proof of Claim 1' in~\cite{supplementary_material}). 
    In particular, by doing this we prove: 
    \begin{claim}\label{claim1}
        $d(v_1) \leq 4$.    
    \end{claim}

    In the following, suppose that $v_2$ is different from $x$; if $v_1$ is not adjacent to $x$, further assume that $d(v_0) \leq d(v_2)$.
    Since $d(v_1) \leq 4$ and $v_1$ has the largest degree among vertices from $V(C) \setminus \{x\}$, we have $d(v_2) \leq 4$, too.
    For $v_1$ and $v_2$ we have $(d(v_1),d(v_2)) \in \{ (4,4),(4,3),(4,2),(3,3),(3,2)\}$, or all vertices of $C$ except $x$ have degree 2 in $G$.

    Using a Python program, see 'S3 Proof of Claim 2' in~\cite{supplementary_material}, we prove:
    
    \begin{claim}\label{claim2}
        $(d(v_1),d(v_2)) \notin \{ (4,4),(4,3),(4,2)\}$.
    \end{claim}
    \noindent By showing that in these cases of $d(v_1)$ and $d(v_2)$ a strong locally irregular coloring of a digraph $D_2$ obtained from $D$ by removing an arc between $v_1$ and $v_2$, and all pendant neighbors of $v_1$ and $v_2$, can be extended to a locally irregular coloring with two colors of $D$.
    Once again, the number of considered cases is finite due to the degrees of $v_1$ and $v_2$ in $G$.
    
    For $(d(v_1),d(v_2)) = (3,3)$ we show using a Python program (see 'S4 Proof of Claim 3' in~\cite{supplementary_material}) that using the same approach as to prove Claim~\ref{claim2} results in four cases (shown in Figure~\ref{fig:four_unsolved_cases_by_computer}) for which a strong locally irregular coloring of $D_3$ (a digraph obtained from $D$ by removing the arc between $v_1$ and $v_2$, and pendant neighbors of $v_1$ and $v_2$) cannot be extended to a strong locally irregular coloring with two colors of $D$.
    However, in these cases, the considered color degrees of $v_0$ and $v_3$ (a neighbor of $v_2$ on $C$ different from $v_1$), and colors and orientations of arcs incident to them yields $d(v_0) \geq 4$ and $d(v_3) \geq 4$: to see this, consider the case when a blue arc is going from $v_1$ to $v_0$ and the blue balanced degree of $v_0$ is 2 (the remaining cases are analogous) -- at least three arcs are necessarily outgoing from $v_0$ in $D_3$.
    If the length of $C$ is at least 4 then $v_0 \neq x$ or $v_3 \neq x$, which contradicts the choice of $v_1$ as a vertex of maximum degree among all vertices from $V(C) \setminus \{x\}$.
    We get:
    \begin{claim}
        If $C$ is of length at least $4$ then $(d(v_1),d(v_2)) \neq (3,3)$. 
    \end{claim}

    \begin{figure}[h]
        \centering
        \begin{tikzpicture}
	\begin{pgfonlayer}{nodelayer}
		\node [style={black_dot}] (0) at (-4, 2) {$v_0$};
		\node [style={black_dot}] (1) at (-3, 1) {$v_1$};
		\node [style={black_dot}] (2) at (-1.5, 1) {$v_2$};
		\node [style={black_dot}] (3) at (-0.5, 2) {$v_3$};
		\node [style={black_dot}] (4) at (-3, -0.25) {$v_4$};
		\node [style={black_dot}] (5) at (-1.5, -0.25) {$v_5$};
		\node [style=empty-vertex] (6) at (-4, 2.75) {};
		\node [style=empty-vertex] (7) at (-4.5, 2.5) {};
		\node [style=empty-vertex] (8) at (-3.5, 2.5) {};
		\node [style=empty-vertex] (9) at (-0.5, 2.75) {};
		\node [style=empty-vertex] (10) at (-1, 2.5) {};
		\node [style=empty-vertex] (11) at (0, 2.5) {};
		\node [style=empty-vertex] (12) at (-4.75, 2) {$\color{blue}{2}$};
		\node [style=empty-vertex] (14) at (0.25, 2) {$\color{red}{-2}$};
		\node [style={black_dot}] (15) at (2.5, 2) {$v_0$};
		\node [style={black_dot}] (16) at (3.5, 1) {$v_1$};
		\node [style={black_dot}] (17) at (5, 1) {$v_2$};
		\node [style={black_dot}] (18) at (6, 2) {$v_3$};
		\node [style={black_dot}] (19) at (3.5, -0.25) {$v_4$};
		\node [style={black_dot}] (20) at (5, -0.25) {$v_5$};
		\node [style=empty-vertex] (21) at (2.5, 2.75) {};
		\node [style=empty-vertex] (22) at (2, 2.5) {};
		\node [style=empty-vertex] (23) at (3, 2.5) {};
		\node [style=empty-vertex] (24) at (6, 2.75) {};
		\node [style=empty-vertex] (25) at (5.5, 2.5) {};
		\node [style=empty-vertex] (26) at (6.5, 2.5) {};
		\node [style=empty-vertex] (27) at (1.75, 2) {$\color{red}{2}$};
		\node [style=empty-vertex] (28) at (6.75, 2) {$\color{blue}{-2}$};
		\node [style={black_dot}] (29) at (-4, -2.25) {$v_0$};
		\node [style={black_dot}] (30) at (-3, -3.25) {$v_1$};
		\node [style={black_dot}] (31) at (-1.5, -3.25) {$v_2$};
		\node [style={black_dot}] (32) at (-0.5, -2.25) {$v_3$};
		\node [style={black_dot}] (33) at (-3, -4.5) {$v_4$};
		\node [style={black_dot}] (34) at (-1.5, -4.5) {$v_5$};
		\node [style=empty-vertex] (35) at (-4, -1.5) {};
		\node [style=empty-vertex] (36) at (-4.5, -1.75) {};
		\node [style=empty-vertex] (37) at (-3.5, -1.75) {};
		\node [style=empty-vertex] (38) at (-0.5, -1.5) {};
		\node [style=empty-vertex] (39) at (-1, -1.75) {};
		\node [style=empty-vertex] (40) at (0, -1.75) {};
		\node [style=empty-vertex] (41) at (-4.75, -2.25) {$\color{blue}{-2}$};
		\node [style=empty-vertex] (42) at (0.25, -2.25) {$\color{red}{2}$};
		\node [style={black_dot}] (43) at (2.5, -2.25) {$v_0$};
		\node [style={black_dot}] (44) at (3.5, -3.25) {$v_1$};
		\node [style={black_dot}] (45) at (5, -3.25) {$v_2$};
		\node [style={black_dot}] (46) at (6, -2.25) {$v_3$};
		\node [style={black_dot}] (47) at (3.5, -4.5) {$v_4$};
		\node [style={black_dot}] (48) at (5, -4.5) {$v_5$};
		\node [style=empty-vertex] (49) at (2.5, -1.5) {};
		\node [style=empty-vertex] (50) at (2, -1.75) {};
		\node [style=empty-vertex] (51) at (3, -1.75) {};
		\node [style=empty-vertex] (52) at (6, -1.5) {};
		\node [style=empty-vertex] (53) at (5.5, -1.75) {};
		\node [style=empty-vertex] (54) at (6.5, -1.75) {};
		\node [style=empty-vertex] (55) at (1.75, -2.25) {$\color{red}{-2}$};
		\node [style=empty-vertex] (56) at (6.75, -2.25) {$\color{blue}{2}$};
	\end{pgfonlayer}
	\begin{pgfonlayer}{edgelayer}
		\draw [style={dashed_edge}] (0) to (7);
		\draw [style={dashed_edge}] (0) to (6);
		\draw [style={dashed_edge}] (0) to (8);
		\draw [style={dashed_edge}] (3) to (10);
		\draw [style={dashed_edge}] (3) to (9);
		\draw [style={dashed_edge}] (3) to (11);
		\draw [style={dashed_edge}] (15) to (22);
		\draw [style={dashed_edge}] (15) to (21);
		\draw [style={dashed_edge}] (15) to (23);
		\draw [style={dashed_edge}] (18) to (25);
		\draw [style={dashed_edge}] (18) to (24);
		\draw [style={dashed_edge}] (18) to (26);
		\draw [style={dashed_edge}] (29) to (36);
		\draw [style={dashed_edge}] (29) to (35);
		\draw [style={dashed_edge}] (29) to (37);
		\draw [style={dashed_edge}] (32) to (39);
		\draw [style={dashed_edge}] (32) to (38);
		\draw [style={dashed_edge}] (32) to (40);
		\draw [style={dashed_edge}] (43) to (50);
		\draw [style={dashed_edge}] (43) to (49);
		\draw [style={dashed_edge}] (43) to (51);
		\draw [style={dashed_edge}] (46) to (53);
		\draw [style={dashed_edge}] (46) to (52);
		\draw [style={dashed_edge}] (46) to (54);
		\draw [style={blue_arc}] (1) to (0);
		\draw [style={blue_arc}] (18) to (17);
		\draw [style={blue_arc}] (29) to (30);
		\draw [style={blue_arc}] (45) to (46);
		\draw [style={red_arc}] (43) to (44);
		\draw [style={red_arc}] (31) to (32);
		\draw [style={red_arc}] (16) to (15);
		\draw [style={red_arc}] (3) to (2);
		\draw [style=arrow] (4) to (1);
		\draw [style=arrow] (1) to (2);
		\draw [style=arrow] (2) to (5);
		\draw [style=arrow] (19) to (16);
		\draw [style=arrow] (16) to (17);
		\draw [style=arrow] (17) to (20);
		\draw [style=arrow] (30) to (33);
		\draw [style=arrow] (31) to (30);
		\draw [style=arrow] (34) to (31);
		\draw [style=arrow] (45) to (44);
		\draw [style=arrow] (44) to (47);
		\draw [style=arrow] (48) to (45);
	\end{pgfonlayer}
\end{tikzpicture}
        \caption{Four cases for which a solution was not found using a computer.}
        \label{fig:four_unsolved_cases_by_computer}
    \end{figure}
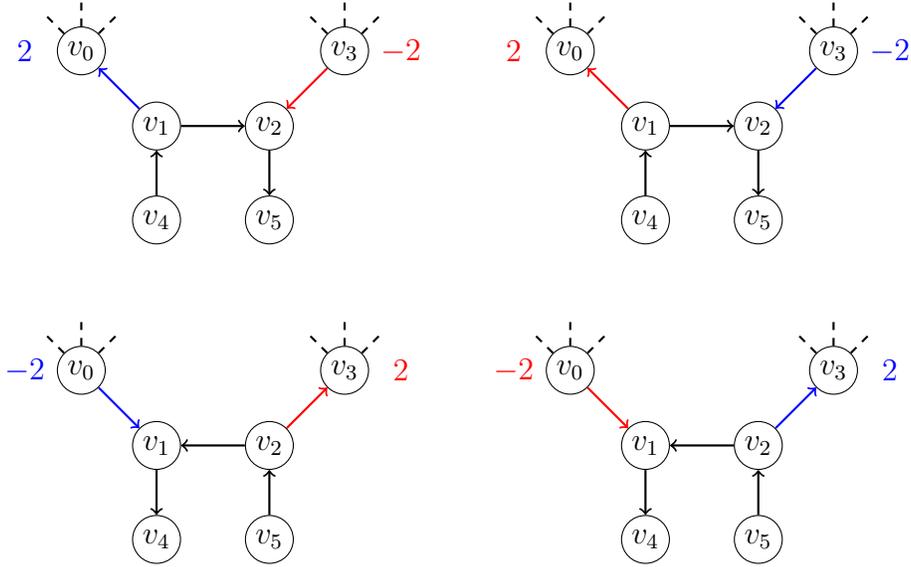

    Suppose that $C$ has a length of 4 or more.
    If $d(v) = 2$ for every $v \in V(C) \setminus \{x\}$, take a digraph $D_4$ obtained from $D$ by removing all vertices of $C$ except $x$ and its two neighbors.
    The removed part is an orientation of a path of length at least 2.
    Hence, the removed part of $D$ can be decomposed into both even or odd numbers of strong locally irregular digraphs -- each consisting of one or two consecutive arcs. 
    Alternating blue and red colors on incident digraphs from such a decomposition results in a coloring of the removed digraph where the first and the last arcs are colored as needed -- with colors different from the colors of arcs they are attached to $x$. 
    Such a coloring combined with a strong locally irregular coloring of $D_4$ yields a strong locally irregular coloring with two colors of $D$, a contradiction. Hence:
    \begin{claim}
        If the length of $C$ is at least $4$ then $d(v_1) \geq 3$.
    \end{claim}

    If the length of $C$ is at least 4, at least one of the vertices $v_0$ and $v_3$ is different from $x$.
    Suppose that $v_0 \neq x$.
    The choice of $v_2$ (a neighbor $v_1$ on $C$ different from $x$ with the maximum degree) implies $d(v_0) = 2$.
    In this case, denote by $D_5$ a digraph obtained from $D$ by removing $v_1$ and its pendant neighbor.
    A strong locally irregular coloring of $D_5$ can be extended to a strong locally irregular coloring with two colors of $D$ in the following way:
    If the arcs between $v_0$ and its neighbor in $D_5$, and between $v_2$ and its neighbor in $D_5$ are colored the same, w.l.o.g. blue, color the pendant arc incident to $v_1$ blue, and the remaining two arcs red. If these two arcs of $D_5$ are colored with different colors, w.l.o.g. the arc incident to $v_0$ is blue and the arc incident to $v_2$ is red, color the arc between $v_1$ and $v_2$ blue, and the remaining two arcs red.
    For an overview of these cases see Figure~\ref{fig:cases_3-2}.
    On the other hand, if $v_0 = x$, then $v_3 \neq x$ and from the assumption that $v_1$ has the maximum degree among all vertices from $V(C) \setminus \{x\}$ we get that $d(v_2) \in \{2,3\}$. 
    Considering obtainable configurations, using a Python program, see 'S5 Proof of Claim 5' in~\cite{supplementary_material}, we complete the proof of the following:
    \begin{claim}\label{claim5}
        If the length of $C$ is at least $4$ then $(d(v_1), d(v_2)) \neq (3,2)$.
    \end{claim}
    This completes the proof of (i).

    \begin{figure}[h]
        \centering
        \begin{tikzpicture}[scale=1]
	\begin{pgfonlayer}{nodelayer}
		\node [style={black_dot}] (0) at (-3, 2) {$v_0$};
		\node [style={black_dot}] (1) at (-2, 1) {$v_1$};
		\node [style={black_dot}] (2) at (-0.5, 1) {$v_2$};
		\node [style={black_dot}] (3) at (1, 1) {$v_3$};
		\node [style={black_dot}] (4) at (2, 2) {$v_4$};
		\node [style={black_dot}] (5) at (-2, -0.25) {$v_5$};
		\node [style=empty-vertex] (6) at (-3, 2.75) {};
		\node [style=empty-vertex] (7) at (-3.5, 2.5) {};
		\node [style=empty-vertex] (8) at (-2.5, 2.5) {};
		\node [style=empty-vertex] (9) at (1.5, 2.5) {};
		\node [style=empty-vertex] (10) at (2, 2.75) {};
		\node [style=empty-vertex] (11) at (2.5, 2.5) {};
		\node [style={black_dot}] (12) at (4.5, 2) {$v_0$};
		\node [style={black_dot}] (13) at (5.5, 1) {$v_1$};
		\node [style={black_dot}] (14) at (7, 1) {$v_2$};
		\node [style={black_dot}] (15) at (8.5, 1) {$v_3$};
		\node [style={black_dot}] (16) at (9.5, 2) {$v_4$};
		\node [style={black_dot}] (17) at (5.5, -0.25) {$v_5$};
		\node [style=empty-vertex] (18) at (4.5, 2.75) {};
		\node [style=empty-vertex] (19) at (4, 2.5) {};
		\node [style=empty-vertex] (20) at (5, 2.5) {};
		\node [style=empty-vertex] (21) at (9, 2.5) {};
		\node [style=empty-vertex] (22) at (9.5, 2.75) {};
		\node [style=empty-vertex] (23) at (10, 2.5) {};
		\node [style={black_dot}] (24) at (8.5, -0.25) {$v_6$};
	\end{pgfonlayer}
	\begin{pgfonlayer}{edgelayer}
		\draw [style={dashed_edge}] (7) to (0);
		\draw [style={dashed_edge}] (0) to (6);
		\draw [style={dashed_edge}] (0) to (8);
		\draw [style={dashed_edge}] (4) to (9);
		\draw [style={dashed_edge}] (4) to (10);
		\draw [style={dashed_edge}] (4) to (11);
		\draw (0) to (1);
		\draw (1) to (2);
		\draw (2) to (3);
		\draw (3) to (4);
		\draw (1) to (5);
		\draw [style={dashed_edge}] (19) to (12);
		\draw [style={dashed_edge}] (12) to (18);
		\draw [style={dashed_edge}] (12) to (20);
		\draw [style={dashed_edge}] (16) to (21);
		\draw [style={dashed_edge}] (16) to (22);
		\draw [style={dashed_edge}] (16) to (23);
		\draw (12) to (13);
		\draw (13) to (14);
		\draw (14) to (15);
		\draw (15) to (16);
		\draw (13) to (17);
		\draw (15) to (24);
	\end{pgfonlayer}
\end{tikzpicture}
        \caption{Cases considered in Claim~\ref{claim5}.}
        \label{fig:cases_3-2}
    \end{figure}
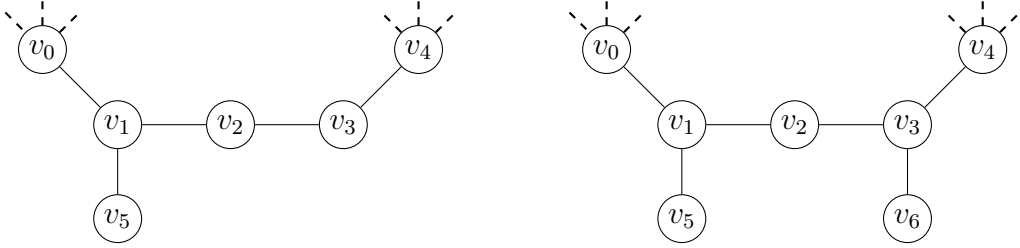

    Hence, every pendant cycle of $G$ has length 3. 
    Take a pendant cycle $C$ such that $G - V(C)$ has at most one component that contains cycles (such a cycle always exists in a cactus different from a tree).
    Denote the vertices on $C$ by $x, v_1, v_2$ such that $v_1$ and $v_2$ do not lie on any other cycle in $G$, and $d(v_1) \geq d(v_2)$.
    Claim~\ref{claim1} and Claim~\ref{claim2} (no assumption on the length of $C$ was used to prove these claims) yields that $d(v_2) = 2$ and either $d(v_1) = 2$ or $d(v_1) = 3$.
    
    To prove (ii), suppose to the contrary that $x$ does not lie on any other cycle of $G$.
    This in particular means that there is a neighbor $y$ of $x$ that lies in a component of $G-x$ with cycles, and $x$ is incident to $d(x) - 3$ pendant arcs (since every pendant tree of $G$ is a star, see Observation~\ref{obs_min_bad_cactus}).
    Note that $d(x)$ can be upper bounded in a similar way that we used to show $d(v_1) \leq 7$ (i.e., $k \leq 5$) in the proof of (i).
    If $d(x) \geq 10$, remove $d(x) - 3$ pendant neighbors of $x$ and color the obtained digraph in a strong locally irregular way with 2 colors.
    Any coloring of the pendant arcs incident to $x$ changes both color degrees of $x$ in a way different from the others, and there are at most seven possibilities for the balanced color degrees of $x$ that would violate the strong locally irregular condition: three for the arcs incident to $x$ in the digraph obtained from $D$ by removing the pendant arcs incident to $x$, and four on the pendant arcs incident to $x$ ($-1$ or $1$ in blue or red). 
    Hence, if $d(x) - 3 \geq 7$ there are at least $8$ ways of colorings of arcs incident to $x$ (see Observation~\ref{obs_oriented_star_1}), and at least one of them does not create any conflict with the strong locally irregular condition.
    Thus, we may assume $d(x) \leq 9$, i.e., $x$ is incident to at most $6$ pendant arcs.

    Let $D'$ be a digraph obtained from $D$ by removing $v_1$ and its pendant neighbor (if present), $v_2$, and all pendant neighbors of $x$. Using a python program, see 'S6 Proof of (ii)' in~\cite{supplementary_material}, we show that a strong locally irregular coloring of $D'$ can be extended to a strong locally irregular coloring  with two colors of $D$ for every possible orientation of edges incident to $x$, $v_1$, and $v_2$. This completes the proof of (ii).
     \end{proof}

As immediate consequences of Lemma~\ref{lemma_counterexample_cactus} we get:
\begin{tw}
    If $D$ is an orientation of a cactus graph $G$ without cycles of length three then $\operatorname{slir}(D) \leq 2$.
\end{tw}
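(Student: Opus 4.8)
The plan is to derive this theorem as a direct corollary of Lemma~\ref{lemma_counterexample_cactus}, which characterizes the structure of a minimum counterexample to Conjecture~\ref{main1} among orientations of cacti. First I would argue by contradiction: suppose the theorem fails, so there is a cactus $G$ without triangles admitting an orientation $D$ with $\operatorname{slir}(D) \geq 3$. Among all such counterexamples (cacti whose orientations require at least three colors), choose one that is minimum with respect to the number of vertices and edges. This minimum cactus $G$ is exactly the object to which Lemma~\ref{lemma_counterexample_cactus} applies.

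The key step is then to invoke part (i) of Lemma~\ref{lemma_counterexample_cactus}, which states that every pendant cycle of $G$ has length $3$. By Theorem~\ref{thm_slir_unicyclic} the minimum counterexample has at least two cycles, so it is not a tree, and hence (as noted in the text preceding the lemma) it must contain a pendant cycle. That pendant cycle has length $3$ by Lemma~\ref{lemma_counterexample_cactus}(i), which forces $G$ to contain a cycle of length three. This directly contradicts the hypothesis that $G$ is a cactus without cycles of length three. Therefore no such counterexample exists, and every orientation $D$ of a triangle-free cactus $G$ satisfies $\operatorname{slir}(D) \leq 2$.

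I expect essentially no obstacle here, since all the difficulty has already been absorbed into the proof of Lemma~\ref{lemma_counterexample_cactus}, particularly into its computer-assisted Claims~\ref{claim1} through~\ref{claim5}. The only point requiring a small amount of care is ensuring that the minimality used in applying the lemma matches the minimality in the lemma's hypothesis (minimum concerning both the number of vertices and edges), and that the existence of a pendant cycle in any non-tree cactus is correctly cited from the discussion between Theorem~\ref{thm_slir_unicyclic} and Lemma~\ref{lemma_counterexample_cactus}. Once these are in place, the contradiction is immediate and the proof is a one-line consequence of the lemma.
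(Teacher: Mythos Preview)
Your proposal is correct and matches the paper's approach exactly: the paper's proof is the single line ``This follows from 1) of Lemma~\ref{lemma_counterexample_cactus},'' and your argument is simply an expanded version of that derivation. The one subtlety you flag---that the minimality hypothesis in the lemma must align with the minimality you invoke---is indeed the only point needing care (the lemma's proof only uses that every smaller cactus has all orientations $2$-colorable, which holds for a minimum triangle-free counterexample since subgraphs of triangle-free cacti are triangle-free cacti), and the paper glosses over it in the same way.
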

\begin{proof}
    This follows from 1) of Lemma~\ref{lemma_counterexample_cactus}.
\end{proof}
\begin{tw}
    If $D$ is an orientation of a cactus graph $G$ with vertex-disjoint cycles then $\operatorname{slir}(D) \leq 2$.
\end{tw}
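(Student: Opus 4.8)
The plan is to argue by contradiction, mirroring the proof of the preceding theorem (which follows from part (i) of Lemma~\ref{lemma_counterexample_cactus}) but now invoking part (ii). Suppose the statement fails and choose, among all cacti with vertex-disjoint cycles admitting an orientation whose strong locally irregular chromatic index is at least $3$, a graph $G$ of minimum value $|V(G)| + |E(G)|$; fix an orientation $D$ of $G$ with $\operatorname{slir}(D) \geq 3$. By Lemma~\ref{lemma_sliec_bipartite} a tree (being bipartite) is never such a counterexample, and by Theorem~\ref{thm_slir_unicyclic} neither is a unicyclic graph, so $G$ has at least two cycles. In particular $G$ is not a tree, hence it contains a pendant cycle $C$ for which $G - V(C)$ has at most one component containing cycles.

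The first step is to observe that conclusions (i) and (ii) of Lemma~\ref{lemma_counterexample_cactus} apply to our $G$, even though $G$ is chosen minimal only inside the subclass of cacti with vertex-disjoint cycles rather than among all cacti. The point is that every reduction carried out in the proof of Lemma~\ref{lemma_counterexample_cactus} --- deleting pendant neighbors of a vertex, deleting an arc of a pendant cycle, or deleting the vertices of a subpath --- produces a sub-digraph of $D$, and the skeleton of any sub-digraph of $D$ is again a cactus with vertex-disjoint cycles of strictly smaller size. Thus, at each step the minimality of $G$ \emph{within this subclass} already supplies the strong locally irregular $2$-coloring of the reduced digraph that the proof of Lemma~\ref{lemma_counterexample_cactus} relies on. Consequently the identical argument shows that every pendant cycle of $G$ has length $3$ (part (i)), and that any pendant cycle of length $3$ whose removal leaves at most one cyclic component must contain a vertex lying on several cycles of $G$ (part (ii)).

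It then remains to combine these facts with the structure of $G$. By part (i) the pendant cycle $C$ chosen above has length $3$, and since $G - V(C)$ has at most one cyclic component, part (ii) forces some vertex of $C$ to lie on at least two cycles of $G$. This directly contradicts the hypothesis that the cycles of $G$ are pairwise vertex-disjoint. Hence no counterexample $G$ exists, and the theorem follows.

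I expect the only genuinely delicate point to be the transfer described in the second paragraph: one must verify that the subclass of cacti with vertex-disjoint cycles is closed under all deletions used inside Lemma~\ref{lemma_counterexample_cactus}, so that the minimality hypothesis is legitimate in the restricted setting. Since every such operation merely passes to a sub-digraph, and vertex-disjointness of cycles is inherited by subgraphs, this closure is routine; once it is granted, the contradiction via (i) and (ii) is immediate, which is exactly why the result is presented as a consequence of Lemma~\ref{lemma_counterexample_cactus}.
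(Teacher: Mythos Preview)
Your proposal is correct and follows essentially the same route as the paper: take a minimum counterexample, invoke parts (i) and (ii) of Lemma~\ref{lemma_counterexample_cactus}, and derive a vertex lying on two cycles, contradicting vertex-disjointness. You are in fact more careful than the paper, which applies Lemma~\ref{lemma_counterexample_cactus} directly without spelling out why its conclusions still hold when minimality is taken only over cacti with vertex-disjoint cycles; your observation that every reduction in that lemma's proof passes to a subgraph (hence stays in the subclass) is exactly the point needed to make the argument airtight.
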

\begin{proof}
    Let $G$ be a cactus with the minimum number of vertices and edges which has an orientation $D$ with $\operatorname{slir}(D) \geq 3$.
    Take a pendant cycle $C$ of $G$ such that at most one component of $G-V(C)$ contains cycles. 
    From 1) of Lemma~\ref{lemma_counterexample_cactus} we have that the length of $C$ is three.
    From 2) we get that there is a vertex of $C$ which lies on multiple cycles in $G$, contradicting the assumption that cycles of $G$ are vertex-disjoint.
\end{proof}

\section{Concluding remarks}

In this paper, we introduced and investigated two new versions (weak and strong) of the local irregularity of digraphs.
We discussed the relation of weak and strong locally irregular colorings to other locally irregular colorings of digraphs, in particular to $(+,+)$-locally irregular colorings introduced in~\cite{Bensmail Renault}.

While $(+,+)$-locally irregular coloring is a special case of weak locally irregular coloring (hence, $\mathrm{lir}^{(+,+)}(D) \leq k$ implies $\mathrm{lir}(D) \leq k$), it is not true that every $(+,+)$-locally irregular coloring is a strong locally irregular coloring or vice versa.
Moreover, the weak local irregularity of an orientation of a graph $G$ is implied by the local irregularity of $G$ itself.
In the case of the strong local irregularity, we showed its independence from the local irregularity of simple graphs.
Hence, the strong local irregularity, compared to the weak one, better utilizes core properties of digraphs, while, on the other hand, the results on weak locally irregular colorings and decompositions of oriented graphs in particular, could give some information on the locally irregular colorings of their skeletons.

From these notes the following problem arises naturally:
\begin{problem}
    Provide a general constant upper bound on $\operatorname{slir}(D)$.
\end{problem}
This problem can be reduced to providing a general constant upper bound for oriented graphs.
In a digraph $D$, take a subdigraph $D'$ induced on arcs going in both directions between pairs of vertices.
Then take a subdigraph $D''$ of $D'$ that is strong locally irregular and contains exactly one of each pair of arcs going in opposite direction between two vertices (such a digraph always exists, see Theorem~\ref{Borowiecki2}).
The subdigraph $D$ induced on the arcs $E(D) \setminus E(D'')$ is then an orientation of the skeleton of $D$.
Hence, a general constant upper bound on the strong locally irregular chromatic index for oriented graphs yields a constant upper bound on it in the case of general digraphs. 
In particular, we have the following analog of Lemma~\ref{2}:
\begin{lem}
    Let $D$ be a digraph with a skeleton $G$. If the strong locally irregular chromatic index of every orientation of $G$ is at most $k$ then $\operatorname{slir}(D) \leq k+1$.
\end{lem}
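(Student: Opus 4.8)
The plan is to mirror the argument used for Lemma~\ref{2} in the weak setting, simply replacing the orientation result on indegrees (Theorem~\ref{Borowiecki1}) by its balanced-degree counterpart (Theorem~\ref{Borowiecki2}). The only reason $D$ fails to be an orientation of its own skeleton $G$ is the presence of pairs of arcs $xy, yx$ going in both directions between a single pair of vertices. So the first step is to isolate the subdigraph $D'$ of $D$ induced on all such bidirectional pairs; every arc of $D$ lying outside $D'$ already uses its underlying edge of $G$ in a single direction.

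Next I would apply Theorem~\ref{Borowiecki2} to the skeleton $H$ of $D'$: it supplies an orientation of $H$ in which every two adjacent vertices have distinct balanced degrees, i.e., a strong locally irregular orientation. Realizing this orientation inside $D'$ yields a subdigraph $D''$ that contains exactly one arc from each bidirectional pair and is itself strong locally irregular.

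The key structural observation is then that $D \setminus E(D'')$ is an orientation of $G$. Indeed, for an edge of $G$ whose endpoints are joined by a single arc in $D$, that arc survives (it never belonged to $D''$), whereas for an edge carrying a bidirectional pair, deleting the single arc chosen for $D''$ leaves precisely the opposite arc; hence each edge of $G$ is covered exactly once. By hypothesis this orientation admits a strong locally irregular coloring with at most $k$ colors. Assigning one fresh $(k+1)$-st color to all arcs of $D''$ — which form a strong locally irregular color class by construction — produces a strong locally irregular coloring of $D$ with $k+1$ colors, as required.

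I do not anticipate a genuine obstacle, since both the decomposition and the use of Theorem~\ref{Borowiecki2} are direct analogs of Lemma~\ref{2}. The only points needing care are the bookkeeping that $D \setminus E(D'')$ meets each edge of $G$ exactly once (so that it is a bona fide orientation to which the hypothesis applies) and the verification that the single added color class $D''$ is strong locally irregular on its own, which is exactly what Theorem~\ref{Borowiecki2} guarantees.
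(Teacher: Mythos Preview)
Your proposal is correct and follows essentially the same argument the paper sketches in the paragraph preceding the lemma: isolate the bidirectional subdigraph, use Theorem~\ref{Borowiecki2} to extract a strong locally irregular orientation $D''$ containing one arc from each opposite pair, and observe that $D\setminus E(D'')$ is an orientation of $G$ to which the hypothesis applies. The only cosmetic difference is notation (the paper calls the bidirectional subdigraph $D'$ and the extracted orientation $D''$, matching your choices), so there is nothing to add.
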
 

In the case of orientations of cacti, one could possibly continue with the approach we provided, consider and solve possible cases (after lowering the number of the interesting ones to a finite number). 
However, such an approach would probably not yield anything new that would be of much interest from the combinatorial perspective, except hopefully proving Conjecture~\ref{main1} for such digraphs.
Hence, we suggest a different approach, which could be based on the proof of Theorem~\ref{thm_slir_unicyclic}. 
In that proof a cycle is split, and we take a specific strong locally irregular coloring of the obtained digraph, whose skeleton is a tree.
The fact that we have information on how the arcs incident to each vertex are colored, is key.
Such a coloring yields a strong locally irregular coloring of the original oriented graph, however, the property of the coloring is not preserved.
Hence, a simple induction on the number of cycles of a cactus is not possible. 
But we believe that there might be a way to carefully describe the order in which cycles are split in a cactus, such that in each step we have control on the obtained color degrees of vertices.

Nevertheless, after fully proving that Conjecture~\ref{main1} holds for orientations of all cacti, a natural step would be to consider orientations of outerplanar graphs, or more specifically subcubic outerplanar graphs, and digraphs whose skeletons are cacti.

In the case of weak locally irregular colorings, as the primary goal we see lowering the general upper bound that is currently obtained as direct consequence of the general upper bound proved for $(+,+)$-locally irregular colorings.
For example, proving that Conjecture~\ref{main} is true for every acyclic digraph yields a general upper bound of four (as every digraph can be decomposed into two acyclic digraphs). 
From another point of view, results on locally irregular colorings of multigraphs where the multiplicity of each edge is at most two could be naturally used to bound $\operatorname{lir}$ in the case of corresponding digraphs (a multiedge is replaced by arcs going in opposite direction).

\end{document}